\documentclass[english,a4paper]{amsart}

\usepackage{hyperref}
\usepackage{xy}
\input xy
\xyoption{all}
\usepackage{verbatim}

\newcommand{\triplearrows}{\begin{smallmatrix} \to \\ \to \\ 
\to \end{smallmatrix} }

\hypersetup{
 hidelinks,colorlinks=false,breaklinks=true,
 pdfkeywords={equivariant homotopy theory, Artin's theorem, Brauer's theorem, induction, spectral sequences, K-theory, topological modular forms, tensor triangulated categories, Quillen's F-isomorphism theorem, group cohomology},
 pdfauthor={Akhil Mathew, Niko Naumann, Justin Noel},
}
\usepackage{xr}
\externaldocument[S-]{stratification_paperI}
\usepackage{Definitions}
\usepackage{Environments2}
\usepackage{PageSetup}
\usepackage{amssymb}
\usepackage{enumitem} 
\usepackage{geometry} 
\usepackage[toc,page]{appendix} 
\usepackage{mathtools} 

\usepackage[
disable=true, 
color=orange!80, 
bordercolor=black,
textwidth=.8in,
textsize=small]
{todonotes}

\usepackage{caption} 
\usepackage{subcaption}
\DeclareCaptionSubType[Alph]{figure}
\captionsetup[subfigure]{labelformat=parens}
\usepackage[bb=ams, cal=euler, scr=rsfso]{mathalfa}

\usepackage{tabu}
\makeatletter \providecommand\@dotsep{5}
\makeatother

%


%

\newcommand{\Desc}{\mathrm{Desc}}
\newcommand{\fun}{\mathrm{Fun}}
\newcommand{\Fun}{\mathrm{Fun}}

\newcommand{\Res}{\mathrm{Res}}
\renewcommand{\Top}{{\mathcal{S}}}

\usepackage{url} 
\renewcommand{\ho}{\mathrm{Ho}\,}
\newcommand{\Spec}{\mathrm{Spec}}
\newcommand{\SpG}{\mathrm{Sp}_G}
\newcommand{\SpH}{\mathrm{Sp}_H}
\newcommand{\SpK}{\mathrm{Sp}_K}
\newcommand{\Spe}{\mathrm{Sp}}
\theoremstyle{definition}
\newtheorem{thmA}{Theorem}

\renewcommand\Im{\mathrm{Im}}
\Crefname{thmA}{Theorem}{Theorems}
\newcommand{\sOG}{\sO(G)}

\newcommand{\ZOG}{\bZ\sO(G)}
\newcommand{\ZOGF}{\bZ\sO(G)_{\sF}}
\newcommand{\ZOGC}{\bZ\sO(G)_{\sC}}

\newcommand{\sOGF}{\sO_{\sF}(G)}

\newcommand{\sOGEp}{\sO(G)_{\sE_p}}

\newcommand{\sOGC}{\sO(G)_{\sC}}

\newcommand{\bb}[1]{\underline{#1}}
\newcommand{\sAll}{\sA\ell\ell}
\newcommand{\sAb}{\sA}
\newcommand{\sAbone}{\sAb^1}
\newcommand{\sAbn}{\sAb^n}
\newcommand{\sAbp}{\sAb_{(p)}}
\newcommand{\sTriv}{\sT}
\newcommand{\Tow}{\mathrm{Tow}}
\newcommand{\Townil}{\Tow^{\textrm{nil}}}
\newcommand{\Towfast}{\Tow^{\textrm{fast}}}

\newcommand{\sEp}{\sE_{(p)}}

\newcommand{\sAllp}{\sAll_{(p)}}

\newcommand{\sFNilPr}{{\sF^{\textrm{Nil}}}^\prime}

\newcommand{\sFNil}{\sF^{\textrm{Nil}}}

\newcommand{\sAbpn}{\sAb_{(p)}^n}
\newcommand{\sAbpnNil}{\sA_{(p)}^{n,\textrm{Nil}}}
\newcommand{\GSpec}{\mathrm{Sp}_G}

\newcommand{\KG}{KU}
\newcommand{\koG}{ko}
\newcommand{\KOG}{\mathit{KO}}
\newcommand{\kG}{ku}
\newcommand{\MUG}{\mathit{MU}}
\newcommand{\MOG}{\mathit{MO}}
\newcommand{\MUR}{M\bR}

\newcommand{\ko}{\bb{\mathit{ko}}}
\newcommand{\ku}{\bb{\mathit{ku}}}
\newcommand{\KO}{\bb{\mathit{KO}}}
\newcommand{\KU}{\bb{\mathit{KU}}}
\newcommand{\tmf}{\bb{\mathit{tmf}}}
\newcommand{\Tmf}{\bb{\mathit{Tmf}}}
\newcommand{\TMF}{\bb{\mathit{TMF}}}
\newcommand{\MO}{\bb{\mathit{MO}}}
\newcommand{\MSO}{\bb{\mathit{MSO}}}
\newcommand{\MSpin}{\bb{\mathit{MSpin}}}
\newcommand{\MString}{\mathit{MString}}
\newcommand{\MSp}{\mathit{MSp}}

\newcommand{\Kn}{\bb{K(n)}}
\newcommand{\Gpi}{\pi_*^{(-)}}
\newcommand{\Gpit}{\pi_t^{(-)}}
\renewcommand{\mod}{\mathrm{Mod}}
\newcommand{\ring}{\mathrm{Ring}}
\renewcommand{\theenumi}{\arabic{enumi}}

\makeatletter
\renewcommand{\p@enumii}{\theenumi.}
\makeatother
\setcounter{tocdepth}{2}

\newtheorem{cons}[thm]{Construction}
\begin{document}
	\title{Derived induction and restriction theory}

	\author{Akhil Mathew}
	\address{University of Chicago, Chicago, IL USA}
	\email{amathew@math.uchicago.edu}
	\urladdr{http://math.uchicago.edu/~amathew/}

	\author{Niko Naumann}
	\address{University of Regensburg\\
	NWF I - Mathematik; Regensburg, Germany}
	\email{Niko.Naumann@mathematik.uni-regensburg.de}
	\urladdr{http://homepages.uni-regensburg.de/~nan25776/}

	\author{Justin Noel}
	\address{University of Regensburg\\
	NWF I - Mathematik; Regensburg, Germany}
	\email{justin.noel@mathematik.uni-regensburg.de}
	\urladdr{http://nullplug.org}
	\thanks{The first author was supported by the NSF Graduate Research
	Fellowship under grant DGE-110640, and the work was finished while the
	first author
	was a Clay Research Fellow. 
	The second author was partially supported by the SFB 1085 - Higher Invariants, Regensburg.
	The third author was partially supported by the DFG grants: NO 1175/1-1 and
	SFB 1085 - Higher Invariants, Regensburg. }

\date{\today}

\begin{abstract}
	Let $G$ be a finite group.  To any family $\sF$ of subgroups of $G$, we
	associate a thick $\otimes$-ideal $\sFNil$ of the category of $G$-spectra
	with the property that every $G$-spectrum in $\sFNil$ (which we call
	$\sF$-nilpotent) can be reconstructed from its underlying $H$-spectra as $H$
	varies over $\sF$. A similar result holds for calculating $G$-equivariant
	homotopy classes of maps into such spectra via an appropriate homotopy limit
	spectral sequence. In general, the condition $E\in \sFNil$ implies strong
	collapse results for this spectral sequence as well as its dual homotopy
	colimit spectral sequence. As applications, we obtain Artin and Brauer type
	induction theorems for $G$-equivariant $E$-homology and
	cohomology, and generalizations of Quillen's $\cF_p$-isomorphism theorem when $E$ is a homotopy commutative $G$-ring spectrum.

	We show that the subcategory   $\sFNil$ contains many $G$-spectra of interest for
	relatively small families $\sF$. These include $G$-equivariant real and complex $K$-theory as well as the Borel-equivariant cohomology theories associated to complex oriented ring spectra, the $L_n$-local sphere, the classical bordism theories, connective real $K$-theory, and any of the standard variants of topological modular forms. In each of these cases we identify the minimal family such that these results hold.
\end{abstract}

\maketitle
 
\tableofcontents

\section{Introduction}
\subsection{Motivation and overview}
Let $G$ be a finite group and $R(G)$ the Grothendieck ring of
finite-dimensional complex representations of $G$.  One can ask if
$R(G)$ is determined by the representation rings $R(H)$ as $H$ varies over some
set $\cC$ of subgroups of $G$. For example, every $G$-representation $V$ has an
underlying, or restricted, $H$-representation $\Res_H^G V$, and we can ask if
the product of the restriction maps \[  \Res^G_{\cC}: R(G)\longrightarrow
\prod_{H\in \cC} R(H)\] is injective. By elementary character theory, this holds if $\cC$ contains the cyclic subgroups of $G$.

Alternatively, associated to every $H$-representation $W$ is an induced $G$-representation $\Ind_H^G W$ and we can ask if the direct sum of the induction maps 
\begin{equation*}
\Ind^G_{\cC}  : \bigoplus_{H\in \cC} R(H)\longrightarrow R(G)
 \end{equation*}
  is surjective. This holds if $\cC$ contains the Brauer elementary subgroups of $G$, i.e., subgroups of the form $C\times P$, where $P$ is a $p$-group and $C$ is a cyclic group of order prime to $p$ \cite[\S 10.5, Thm.~19]{Ser77}. 

In general,  there are strong  
restrictions on elements in the image of the restriction homomorphism: 
for example,
an element $\{W_H\}\in \prod_{H\in \cC} R(H)$ can only be in the image of $\Res^G_{\cC}$ if 
\begin{enumerate}
 \item\label{it:res-compat} for every pair of subgroups $H_1, H_2\in \cC$ such that $H_1\leq H_2$, $\Res_{H_1}^{H_2}W_{H_2}=W_{H_1}$ and
 \item\label{it:conj-compat} for every pair of subgroups $H_1, H_2\in \cC$ and
 $g \in G$ such that $gH_1g^{-1}=H_2$, $W_{H_2}$ is the image of $W_{H_1}$ under the isomorphism $R(H_1)\xrightarrow{\sim} R(H_2)$  induced by conjugating $H_1$ by $g$.
 \end{enumerate}

 In this paper, we consider only the case where $\cC = \sF$ is a \emph{family}
 of subgroups, that is, a nonempty collection of subgroups closed under
 subconjugation.\footnote{There is also a rich literature in the more general
 case where $\cC$ is only closed under conjugation; see
 subsection~\ref{relatedwork} for some references.} Then one can consider the
 subset of the product $\prod_{H\in \sF} R(H)$ consisting of those elements
 which satisfy conditions \eqref{it:res-compat} and \eqref{it:conj-compat}.
 This subset can be identified with a certain limit, $\lim_{\sOGF^\op} R(H)$,
 indexed over a subcategory $\sOGF$ of the orbit category of $G$,  and the restriction map naturally lifts to this limit. 

 We can apply a dual construction for the induction homomorphism to obtain maps which factor through the induction and restriction maps above:
 \begin{equation}\label{eq:ind-res}
  \bigoplus_{G/H\in \sOGF}R(H)\twoheadrightarrow \colim_{\sOGF} R(H)\xrightarrow{\Ind_{\sF}^G} R(G)\xrightarrow{\Res_{\sF}^G} \lim_{\sOGF^\op} R(H)\hookrightarrow \prod_{G/H\in \sOGF^\op}R(H).
  \end{equation}

 If $\sF$ is a family of subgroups which contains the Brauer elementary
 subgroups, then both $\Ind_{\sF}^G$ and $\Res_{\sF}^G$ are
 isomorphisms.\footnote{In fact, these maps are isomorphisms if and only if
 $\sF$ contains the Brauer elementary subgroups \cite[\S 11.3, Thm.~23]{Ser77}.} If
 instead we set $\sF$ to be the generally smaller family of cyclic subgroups, these maps
 are isomorphisms after inverting the order of $G$. We can regard these two
 results as forms of the induction/restriction theorems of Brauer and Artin respectively \cite[Chaps.~9-10]{Ser77}.

A formally analogous result occurs in the theory of group cohomology. Let $A$ be
a $\mathbb{Z}[G]$-module which is $p$-power torsion. 
Then we can consider the group cohomology $H^*(H; A)$ for each subgroup $H \leq
G$; under restriction (and conjugation) of group cohomology classes, we obtain a presheaf of abelian groups on $\sOG$. 
If $\sF$ is a family of subgroups of $G$ which contains the $p$-subgroups, then
the natural map 
\begin{equation} \label{CEformula} H^*(G; A) \to \varprojlim_{\sOGF^{op}} H^*(H;
A)  \end{equation}
is an isomorphism; this is a restatement of the classical Cartan-Eilenberg
\emph{stable elements formula} \cite[Ch. XII, Thm. 10.1]{CEHA}. Using transfer
operations in group cohomology, one also can obtain a colimit decomposition of
$H^*(G; A)$ in terms of the cohomology of the $p$-subgroups of $G$.

The discussion above formally extends to the study of Mackey functors of $G$. A
\emph{Mackey functor} $M$ assigns an abelian group $M(H)$ to each subgroup $H\leq G$.
These abelian groups are related by induction, restriction, and
conjugation maps satisfying certain identities.  In the 
theory of Mackey functors, one aims to find the smallest family $\sF$ of
subgroups of $G$ for a given $M$ such that we can reconstruct $M(G)$ from
$M(H)$ as $H$ varies over $\sF$ as in Brauer's theorem \cite{Dre73b}. Such a
family is called the \emph{defect base of $M$}. 

Recall that Mackey functors naturally occur as the homotopy groups of (genuine)
$G$-spectra.  For
example, $R(G)$ is the zeroth homotopy group of the $G$-fixed point spectrum of
equivariant $K$-theory, \( R(G) \cong \pi_0^G \KG.\) 
Given a $G$-spectrum $M$ and a subgroup $H \leq G$, we associate the
$G$-spectra \[G/H_+ \wedge M \simeq F(G/H_+, M);\] we have
$\pi_0^G (  G/H_+\wedge M )\cong \pi_0^H M\cong \pi_0^G F(G/H_+, M)$. As $G/H$ varies over the orbit category
of $G$, the covariant (resp.~contravariant) functoriality of $G/H_+\wedge M$
(resp.~$F(G/H_+,M)$) gives the induction (resp.~ restriction) maps in the Mackey functor $\pi_0^{(-)} M$.

By taking \emph{homotopy} colimits and limits instead, we can obtain \emph{derived} analogues of the maps in \eqref{eq:ind-res} for a $G$-spectrum $M$:
\begin{equation}\label{eq:derived-ind-res}
  \hocolim_{\sOGF} G/H_+\wedge M \xrightarrow{\Ind_{\sF}^G} M \xrightarrow{\Res_{\sF}^G} \holim_{\sOGF^\op} F(G/H_+,M).
\end{equation}
Here the map $\Ind_{\sF}^G$ is the homotopy colimit of the maps
$G/H_+ \wedge M \to M$ obtained from the projections $G/H_+ \to S^0$ by
smashing with $M$. Similarly,
the map $\Res_{\sF}^G$ is the homotopy limit of the maps $M \to F(G/H_+, M)$
obtained from the projections $G/H_+ \to S^0$ by applying $F(\cdot, M)$. 

Note that the homotopy colimit $\hocolim_{\sOGF} G/H_+$ is the
suspension spectrum of the  classifying space $E \sF$ of the
family of subgroups $\sF$ (cf. \cref{sec:esf}); this is a $G$-space whose non-equivariant homotopy
type is contractible but whose equivariant homotopy type is more subtle. Thus, the induction map $\Ind_{\sF}^G$ in \eqref{eq:derived-ind-res} is 
a type of \emph{assembly map} for 
$M$ and the restriction map $\Res_{\sF}^G$ a type of co-assembly map. 

We can now ask when $\Ind_{\sF}^G$ and $\Res_{\sF}^G$ are equivalences of
$G$-spectra. Below
we will study a stronger condition on $M$, namely that it should be  \emph{$\sF$-nilpotent.} This will insure that not only are these maps equivalences, but also
that the corresponding homotopy colimit and limit spectral sequences collapse in
a strong way:
with a horizontal vanishing line at some finite stage. On homotopy groups, this will imply an analogue of
Artin's theorem (see \Cref{thm:gen-artin}). 

Now if $M=R$ is a homotopy commutative $G$-ring spectrum, then the restriction
maps are maps of ring spectra such that the lift $\Res_{\sF}^G$ is a ring
homomorphism, and we get a corresponding map of graded commutative rings after applying $\pi_*^G$. For example, if $R=\underline{H\bF_p}$ is the $G$-spectrum representing mod-$p$ Borel-equivariant cohomology, then we obtain a ring homomorphism
\[ \pi_{-*}^G\underline{H\bF_p}\cong H^*(BG;\bF_p)\xrightarrow{\Res_{\sF}^G}
\lim_{\sOGF^\op} H^*(BH;\bF_p).\] A celebrated result of Quillen
\cite[Thm.~7.1]{Qui71b} states that this map is a uniform $\cF_p$-isomorphism
when $\sF=\sEp$ is the family of elementary abelian $p$-subgroups of $G$,
i.e., subgroups of the form $C_p^{\times n}$ for some non-negative integer
$n$. Recall that a ring map $f\colon A\rightarrow B$ is a uniform
$\cF_{p}$-isomorphism if there are integers $m>0$ and $n\geq 0$ such that if
$x\in \ker f$ and $y\in B$ then $x^m=0$ and $y^{p^n}\in \Im f$. We will see
that $\underline{H\bF_p}$ is $\sEp$-nilpotent and that our collapse results
for the homotopy limit spectral sequence imply Quillen's theorem as well as
analogs for every homotopy commutative $\sF$-nilpotent $G$-ring spectrum (see \Cref{thm:gen-fiso}). 
\subsection{Main results}
 Throughout this paper, $G$ will denote a finite group and $\sF$ a family of
 subgroups of $G$. We will work with the
homotopy theory  of $G$-spectra. For our purposes, we will use the 
 stable presentable $\infty$-category of
 $G$-spectra $\SpG$ equipped with its symmetric monoidal smash
 product; see for instance 
 \cite[Sec. 5]{MNNa} for a brief account in this language. 
 
In most of this paper, the language of $\infty$-categories is used lightly; 
 if the reader prefers, they can recast our work in the setting of model
 category descriptions of $G$-spectra such as equivariant orthogonal spectra, as in \cite{MandellMay} or \cite{Man04}. 
In fact, the condition of $\sF$-nilpotence depends only on the \emph{homotopy}
category of $G$-spectra. 
The main translation would be that all limits and colimits 
occurring in this paper (in the $\infty$-categorical sense) need to be replaced
by homotopy limits and colimits in the respective model category, so all constructions are
appropriately derived. 
 However, one will still need a theory of $\infty$-categories, as developed in \cite{Lur09,Lur14}, for descent applications such as \cite[Thm.~\ref{S-decompFnilpmodules}]{MNNa}.

 The focus of this paper is the following subcategory of $G$-spectra. 
\begin{defn}[{Cf.~\cite[Def.~\ref{S-Fnildef}]{MNNa}}]\label{def:fnil}
	Let $\sFNil$, the $\infty$-category of $\sF$\emph{-nilpotent}  $G$-spectra, be the smallest thick $\otimes$-ideal in $\SpG$ containing $\{G/H_+\}_{H\in \sF}$. In other words, 
	$\sFNil$ is the smallest full subcategory of $\SpG$ such that:
	\begin{enumerate}
		\item For each subgroup $H\in \sF$, the suspension $G$-spectrum $G/H_+$ is $\sF$-nilpotent.
		\item For $E,F\in \SpG$ and $f\in \SpG(E,F)$, let $Cf$ denote the cofiber
		of $f$. If any two of $\{E,F,Cf\}$ are $\sF$-nilpotent, then all three of them are $\sF$-nilpotent.
		\item If $E\in \SpG$ is a retract of an $\sF$-nilpotent $G$-spectrum, then $E$ is $\sF$-nilpotent.
		\item If $E\in \SpG$ and $F$ is $\sF$-nilpotent, then $E\wedge F$ is $\sF$-nilpotent.
	\end{enumerate}

Let $\sF_1$ and $\sF_2$ be two families of subgroups of $G$ and let $\sF_1\cap
\sF_2$ denote their intersection. 
Then $\sF_1^{\mathrm{Nil}} \cap \sF_2^{\mathrm{Nil}} = (\sF_1 \cap
\sF_2)^{\mathrm{Nil}}$ by
\cite[Prop.~\ref{S-fnilintersect}]{MNNa}. 
For any $G$-spectrum $M$, there is thus  a {minimal} family $\sF$ such that $M$ is $\sF$-nilpotent; we will call this minimal family the \emph{derived defect base of $M$}.
\end{defn}

\begin{remark} 
The terminology ``derived defect base'' in Definition~\ref{def:fnil} is motivated
by the result that $G$-spectra can be viewed as 
\emph{spectral Mackey
functors} as in \cite{GuillouMay, SMackI, SMackII, Nardin}. 
In particular, the notion of derived defect base is an extension of the notion
of defect base from ordinary Mackey functors (valued in abelian groups) to
spectral ones. In particular, it does not refer to the use of ``derived'' techniques such as
derived functors of inverse limits, which is a standard technique in this
context, e.g., in the theory of homology decompositions.
\end{remark}

Although the above definition is simple, it is generally difficult to
determine
the derived defect base directly. We will provide several alternative
characterizations of $\sFNil$ shortly. First we recall some notation. 

For a real orthogonal representation $V$ of $G$, let $S^V=V\cup\{ \infty \}$ denote its
one-point compactification, considered as a pointed $G$-space with $\infty$
as basepoint. The inclusion $0\subset V$ induces an
equivariant map $e_V\colon S^0\rightarrow S^V$ called the \emph{Euler class} of $V$.
We consider in particular the case $V = \tilde{\rho}_G$,
the reduced regular representation of $G$. 

\begin{thmA}[See \Cref{thm:identify-fnil} and \Cref{thm:fnil-three-prime}] \label{thm:fnil-three}
	Let $M\in \SpG$. The following three conditions on $M$ are equivalent:
	\begin{enumerate}
		\item\label{it:fnil-def} The $G$-spectrum $M$ is $\sF$-nilpotent.
		\item\label{it:fnil-reg} For each subgroup $K\not\in \sF$,
		$e_{\tilde{\rho}_K}$ is a nilpotent endomorphism of the
		$K$-spectrum $\Res_K^GM$. In other words, there exists $n \geq 0$ such that the map $e_{n\tilde{\rho}_K}\simeq e_{\tilde{\rho}_K}^n$ is null-homotopic after smashing with $\Res_K^G M$. 

		\item\label{it:fnil-ss} The map of $G$-spectra \(
	\Res_\sF^G:	M\to \holim_{\sOGF^\op} F(G/H_+, M) \) of 
	\eqref{eq:derived-ind-res}	
		is an equivalence and  there is an integer $n\geq 0$ such that for every $G$-spectrum $X$, the $\sF$-homotopy limit spectral sequence:
		\[ E_2^{s,t}=\sideset{}{^s}\lim_{\sOGF^\op}\pi_{t}^H F(X,M) \Longrightarrow \pi_{t-s}^G F(X, \holim_{\sOGF^\op} F(G/H_+, M))\cong M_G^{s-t}(X)\]
		has a horizontal vanishing line of height $n$ on the $E_{n+1}$-page. In other words, we have $E_{n+1}^{k,*}=0$ for all $k>n$.  
		\end{enumerate}
\end{thmA}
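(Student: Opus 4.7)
I would prove the three-way equivalence by establishing $(1) \Rightarrow (2)$ and $(1) \Rightarrow (3)$ via thick $\otimes$-ideal arguments, together with the reverse $(2) \Rightarrow (1)$; the implication $(3) \Rightarrow (1)$ then follows by the same method as $(2) \Rightarrow (1)$, with the vanishing line providing the truncation point directly.

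For $(1) \Rightarrow (2)$, the class $\mathcal{C} := \{M \in \SpG : e_{\tilde{\rho}_K}$ is nilpotent on $\Res_K^G M$ for every $K \notin \sF\}$ is a thick $\otimes$-ideal: closure under smashing and retracts is immediate, while for a cofiber sequence $E \to F \to Cf$ with $e_{\tilde{\rho}_K}^{n_i}$ null on the two outer terms, a diagram chase in the iterated cofiber sequence shows $e_{\tilde{\rho}_K}^{n_1 + n_2}$ is null on $Cf$. To verify each generator $G/H_+$ with $H \in \sF$ lies in $\mathcal{C}$, the double-coset formula gives $\Res_K^G(G/H_+) \simeq \bigvee_{[g] \in K \backslash G / H} K/L_+$ where each stabilizer $L = K \cap gHg^{-1}$ is a proper subgroup of $K$ (since $L \in \sF$ while $K \notin \sF$); then $\tilde{\rho}_K|_L$ contains a trivial summand, so by adjunction $e_{\tilde{\rho}_K}$ is already null on each wedge summand $K/L_+$ after a single application. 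For $(1) \Rightarrow (3)$ the argument is parallel: vanishing-line heights add on a later page under cofibers, and for $G/H_+$ with $H \in \sF$ the descent equivalence reduces to the vanishing of $G/H_+ \wedge \tilde{E}\sF \simeq G_+ \wedge_H \Res_H^G \tilde{E}\sF$ (contractible because $H \in \sF$), while the vanishing line itself follows from the finite cohomological dimension of the orbit category $\sOGF$.

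For the main implication $(2) \Rightarrow (1)$, I would apply the cofiber sequence $E\sF_+ \wedge M \to M \to \tilde{E}\sF \wedge M$ and show the third term vanishes by checking geometric fixed points. For $K \in \sF$ we have $\Phi^K \tilde{E}\sF \simeq 0$; for $K \notin \sF$, since $\Phi^K \tilde{E}\sF \simeq S^0$ the task reduces to $\Phi^K M \simeq 0$, which follows because hypothesis (2) gives $\tilde{E}\mathcal{P}_K \wedge \Res_K^G M \simeq \colim_n (S^{n\tilde{\rho}_K} \wedge \Res_K^G M) \simeq 0$ (the colimit is eventually constant at zero by uniform nilpotence), hence $\Phi^K M = (\tilde{E}\mathcal{P}_K \wedge \Res_K^G M)^K \simeq 0$. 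Thus $M \simeq E\sF_+ \wedge M \simeq \hocolim_{\sOGF} G/H_+ \wedge M$, which expresses $M$ as a homotopy colimit of $\sF$-nilpotent pieces. To upgrade this $\sF$-torsion condition to thick $\otimes$-ideal membership in $\sFNil$, I would run the descent (cobar) tower with totalizations $\mathrm{Tot}^n(M \wedge E\sF_+^{\wedge \bullet + 1})$: each partial totalization is a finite iterated limit of smash products $G/H_+^{\wedge k} \wedge M$ and thus lies in $\sFNil$, and the uniform Euler-class bound forces this tower to converge at a finite stage, so $M$ is a retract of some $\mathrm{Tot}^n$.

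The main obstacle is this last upgrade: passing from the $\sF$-torsion condition (vanishing of geometric fixed points outside $\sF$) to the finite-cell description required for thick-ideal membership. This step relies essentially on the \emph{uniform} nilpotence bound in (2), since the weaker $\sF$-torsion condition does not in general imply $\sF$-nilpotence.
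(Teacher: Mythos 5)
Your overall architecture is reasonable, and your implication $(1)\Rightarrow(2)$ is essentially the paper's argument: the Euler-class condition cuts out a thick $\otimes$-ideal, and the double coset formula shows each generator $G/H_+$ ($H\in\sF$) kills $e_{\tilde{\rho}_K}$ after restriction to $K\notin\sF$. The genuine gap sits exactly where you flag it. In $(2)\Rightarrow(1)$ your geometric fixed point computation establishes only that $\widetilde{E}\sF\wedge M\simeq \ast$, i.e.\ that $M$ is $\sF$-\emph{torsion}, and you then assert that ``the uniform Euler-class bound forces the cobar tower to converge at a finite stage.'' That assertion is the entire content of the implication and is never argued: nothing in your proposal connects the nilpotence of $e_{\tilde{\rho}_K}$ on $\Res_K^G M$ to the nilpotence of the $A_{\sF}$-Adams tower. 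Since $\sF$-torsion is strictly weaker than $\sF$-nilpotence (the paper's remark on endomorphism rings makes this explicit), the detour through geometric fixed points discards precisely the uniformity you need, and the proposed upgrade does not go through as written.

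The paper closes this gap by a more direct mechanism that bypasses both geometric fixed points and the cobar tower. One first reduces, by induction over the subgroup lattice, to the case $\sF=\mathcal{P}_G$ of proper subgroups: both $\sF$-nilpotence and the Euler-class condition are detected by asking, for each $K\notin\sF$, that $\Res_K^G M$ be $\mathcal{P}_K$-nilpotent (resp.\ satisfy the condition for $\mathcal{P}_K$). Then, if $\mathrm{id}_M\wedge e_{n\tilde{\rho}_G}$ is null, the cofiber sequence
\[ S(n\tilde{\rho}_G)_+\wedge M\longrightarrow M\xrightarrow{\ \mathrm{id}_M\wedge e_{n\tilde{\rho}_G}\ } M\wedge S^{n\tilde{\rho}_G} \]
exhibits $M$ as a retract of $S(n\tilde{\rho}_G)_+\wedge M$; since $\tilde{\rho}_G$ has no nonzero fixed vectors, $S(n\tilde{\rho}_G)$ is a \emph{finite} $G$-CW complex whose cells all have proper isotropy, so this smash product, and hence its retract $M$, lies in the thick $\otimes$-ideal generated by $\{G/H_+\}_{H<G}$. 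For $(3)\Leftrightarrow(1)$ the cobar tower is the right tool, but the chain of equivalences (horizontal vanishing line $\Leftrightarrow$ the tower $\{F(\mathrm{sk}_n E\sF_+,M)\}$ converges quickly $\Leftrightarrow$ the $A_{\sF}$-Adams tower is nilpotent $\Leftrightarrow$ $M$ is $A_{\sF}$-nilpotent) needs to be invoked explicitly rather than folded into the $(2)\Rightarrow(1)$ step. A smaller point: your claim that the vanishing line for the generators in $(1)\Rightarrow(3)$ follows from ``the finite cohomological dimension of the orbit category'' is not correct --- $\mathbb{Z}\sOGF$ does not have finite cohomological dimension in general; the collapse for $G/H_+\wedge M$ comes from the extra degeneracy (splitting) of the cosimplicial object after smashing with $G/H_+$.
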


\Cref{thm:fnil-three} is fundamental to this paper. 
Condition~\eqref{it:fnil-reg} is often easy to check in practice, especially
in the presence of \emph{Thom isomorphisms} for representation spheres (see
\Cref{sec:oriented} for some examples); these will lead to most of our
examples of $\sF$-nilpotence.

The equivalence between Conditions \eqref{it:fnil-def} and \eqref{it:fnil-ss}
has several computational consequences which we will now list.
\begin{thmA}[See \Cref{thm:gen-gen-artin}]\label{thm:gen-artin}
	Let $M$ and $X$ be $G$-spectra. Suppose that $M$ is $\sF$-nilpotent. Then each of the following maps
	\[ \colim_{\sOGF} M^*_H(X)\xrightarrow{\Ind_\sF^G} M^*_G(X)\xrightarrow{\Res_\sF^G} \lim_{\sOGF^\op} M^*_H(X)\]
	\[ \colim_{\sOGF} M_*^H(X)\xrightarrow{\Ind_\sF^G} M_*^G(X)\xrightarrow{\Res_\sF^G} \lim_{\sOGF^\op} M_*^H(X)\]
	becomes an isomorphism after inverting $|G|$. 
\end{thmA}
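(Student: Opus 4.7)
The plan is to deduce all four isomorphisms from the horizontal vanishing line supplied by \Cref{thm:fnil-three}(3), together with the classical Amitsur--Dress--Tate observation that the higher derived (co)limits of abelian-group valued diagrams on $\sOGF$ are $|G|$-power torsion.

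For the restriction map in cohomology, I apply \Cref{thm:fnil-three}(3) directly: the spectral sequence
\[ E_2^{s,t} = \sideset{}{^s}\lim_{\sOGF^\op} \pi_t^H F(X,M) \Longrightarrow M_G^{s-t}(X) \]
converges with a horizontal vanishing line of height $n$ on the $E_{n+1}$-page, and the edge map at $s=0$ is exactly $\Res_\sF^G\colon M_G^*(X) \to \lim_{\sOGF^\op} M_H^*(X)$. The Amitsur--Dress--Tate input says $E_2^{s,*}$ is $|G|$-power torsion for $s\geq 1$; since this property passes to subquotients, so is every $E_r^{s,*}$, and in particular $E_\infty^{s,*}$. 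After inverting $|G|$ the finite filtration of $M_G^*(X)[1/|G|]$ collapses onto its bottom piece $E_\infty^{0,*}[1/|G|]$, and the outgoing differentials from the $s=0$ column also vanish after localisation because they land in $|G|$-torsion groups. Hence $E_\infty^{0,*}[1/|G|] = E_2^{0,*}[1/|G|] = \lim M_H^*(X)[1/|G|]$, and tracing the edge map gives the desired isomorphism.

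For the induction map in cohomology, the same argument runs with the dual homotopy colimit spectral sequence associated to the $\sF$-nilpotent equivalence $\hocolim_{\sOGF} G/H_+\wedge M \xrightarrow{\sim} M$, which comes equipped with an analogous vertical vanishing line. Its edge map at $s=0$ is $\Ind_\sF^G\colon \colim_{\sOGF} M_H^*(X) \to M_G^*(X)$, and the same combination of vanishing line plus $|G|$-power torsion in the higher $\colim_s$ produces the isomorphism after inverting $|G|$. Both homology statements run identically with $X\wedge M$ in place of $F(X,M)$: since $\sFNil$ is a $\otimes$-ideal, $X\wedge M$ is itself $\sF$-nilpotent, so the same two spectral sequences apply and their $s=0$ edge maps are $\Ind_\sF^G$ and $\Res_\sF^G$ in homology.

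The principal nontrivial ingredient is the Amitsur--Dress--Tate statement that higher $\lim^s$ and $\colim_s$ over $\sOGF$ with abelian-group coefficients are annihilated by a power of $|G|$ for $s\geq 1$. This is a classical consequence of the transfer structure on the orbit category, but the precise formulation tailored to this setting (along with the dual vanishing line for the hocolim spectral sequence) will need to be invoked from the companion paper \cite{MNNa}. Granted these two ingredients, the rest is formal bookkeeping with bounded filtrations and edge maps of the spectral sequences.
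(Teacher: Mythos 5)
Your proposal is correct and follows essentially the same route as the paper: the horizontal vanishing line from \Cref{thm:fnil-three}, the Amitsur--Dress--Tate fact that the higher derived (co)limits are $|G|$-power torsion (\Cref{prop:torsion} in the paper, which even sharpens $|G|$ to the index $n(\sF)$), and an edge-map analysis of the two spectral sequences. The only cosmetic differences are that the paper reduces to $X=S^0$ using closure of $\sFNil$ under (co)tensors and, for the induction map, localizes the homotopy colimit spectral sequence at the outset so that it collapses at $E_2$ (requiring only that $M$ be $\sF$-torsion, with no vanishing line needed there) --- and note that the vanishing line you invoke for the colimit spectral sequence is horizontal, not vertical.
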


We next state our general analog of Quillen's $\cF_p$-isomorphism theorem. 
\begin{thmA}[See \Cref{thm:gen-gen-fiso}]\label{thm:gen-fiso}
	Let $R$ be a homotopy commutative $G$-ring spectrum and let $X$ be a $G$-space. Suppose that $R$ is $\sF$-nilpotent. Then the canonical map
	\[ R^*_G(X)\xrightarrow{\Res_\sF^G} \lim_{\sOGF^\op} R^*_H(X)\]
	is a uniform $\cN$-isomorphism\footnote{We believe this term was first coined in \cite[p.~88]{Hop87}.}: there are positive integers $m,n$ such that if $x\in \ker \Res_\sF^G$ and $y\in \lim_{\sOGF^\op} R^*_H(X)$ then $x^m=0$ and $y^n\in \Im\ \Res_{\sF}^G$. Moreover, after localizing at a prime $p$, $\Res_{\sF}^G$ is a uniform $\cF_{p}$-isomorphism.
\end{thmA}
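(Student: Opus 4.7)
My plan is to bootstrap both the integral and the $p$-local statements from the multiplicative $\sF$-homotopy limit spectral sequence made available by \Cref{thm:fnil-three}\eqref{it:fnil-ss} in the case $M = R$, using two structural inputs: the horizontal vanishing line at $E_{n+1}$ and the fact (invoked in the discussion preceding \Cref{thm:gen-artin} via Amitsur-Dress-Tate theory) that positive-filtration $E_2$-terms are $|G|$-torsion. The first preparatory step is to promote this spectral sequence to a multiplicative one: since $R$ is a homotopy commutative $G$-ring spectrum, the tower of partial homotopy limits refining $\holim_{\sOGF^\op} F(G/H_+, R)$ inherits a compatible ring structure, yielding a spectral sequence of bigraded-commutative algebras whose differentials satisfy the Leibniz rule. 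Under this structure, $\Res_\sF^G$ is identified with the edge homomorphism
\[
  R^*_G(X) \twoheadrightarrow E_\infty^{0,*} \hookrightarrow E_2^{0,*} = \lim_{\sOGF^\op} R^*_H(X).
\]

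The kernel statement is then immediate. Since $R$ is $\sF$-nilpotent, the spectral sequence converges with separated filtration, so the vanishing line $E_{n+1}^{k,*} = 0$ for $k > n$ forces $F^{n+1} R^*_G(X) = 0$. If $x \in \ker \Res_\sF^G$, then $x \in F^1$ and multiplicativity of the filtration gives $x^{n+1} \in F^{n+1} = 0$; take $m = n+1$.

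For the image statement I will exploit the Leibniz rule together with the torsion bound. For any $y \in E_2^{0,*}$ and any $r \geq 2$, the class $d_r(y)$ lies in positive filtration and is therefore annihilated by $|G|$, so Leibniz gives $d_r(y^{|G|}) = |G|\, y^{|G|-1} d_r(y) = 0$ (the graded-sign caveat for odd-degree $y$ is harmless and can be absorbed by an extra square). Iterating this exponentiation once for each of the finitely many potentially nontrivial differentials $d_2,\dots,d_n$ on the $0$-line --- higher differentials there vanish automatically by the vanishing line --- shows that a fixed power $y^{|G|^n}$ is a permanent cycle, and surjectivity of the edge map lifts it to a class in $R^*_G(X)$ restricting to $y^{|G|^n}$. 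This gives the $\cN$-isomorphism. The $\cF_p$-refinement then follows by $p$-localizing: the $|G|$-torsion bound on positive filtration improves to $p^a$-torsion with $p^a = |G|_{(p)}$, and the same iteration places $y^{p^{an}}$ in the image, a $p$-power of bounded exponent, as required.

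I expect the main technical obstacle to be pinning down the multiplicative structure on the tower rigorously --- producing a tower of ring spectra refining $\holim_{\sOGF^\op} F(G/H_+, R)$ so that the resulting spectral sequence is genuinely one of bigraded-commutative algebras with Leibniz differentials --- since the iterated Leibniz calculations and the Amitsur-Dress-Tate torsion bound are then standard.
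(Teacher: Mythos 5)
Your proposal is correct and follows essentially the same route as the paper's proof (\Cref{thm:gen-gen-fiso}): the horizontal vanishing line kills $(\ker \Res_\sF^G)^{\exp_{\sF}(R)}$, and the Leibniz rule combined with the fact that positive-filtration terms are $n(\sF)$-torsion (with $n(\sF)\mid |G|$) shows that a fixed power of any $0$-line class is a permanent cycle, after replacing $y$ by its square to land in even degree; the multiplicativity of the spectral sequence that you flag as the main technical obstacle is supplied by \cite[App.~B]{GrM95}. The one step where the paper is more careful than your sketch is the $\cF_p$-refinement: one must first justify that the $p$-localization of the spectral sequence is again the $\sF$-homotopy limit spectral sequence for $F(X_+,R)_{(p)}$ (which the paper does by realizing localization as smashing with $S_{(p)}$ and noting quickly converging towers are closed under this), after which your torsion-improvement argument goes through.
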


Both 
Theorems~\ref{thm:gen-artin} 
and 
\ref{thm:gen-fiso} are consequences of 
the horizontal vanishing line and
a transfer argument which implies that the elements in positive filtration degree in the hocolim and holim spectral sequences are $|G|$-torsion.

\begin{cor}[Compare \Cref{prop:spec-homeo}] \label{cor:varieties}
	Under the hypotheses of \Cref{thm:gen-fiso}, the map 
	of commutative rings
	\(\Res_\sF^G\colon R^0_G(X)\rightarrow \lim_{\sOGF^\op} R^0_H(X) \)
	 induces a homeomorphism between the associated Zariski spaces\footnote{Under additional finiteness hypotheses (see \Cref{prop:stratification}), there is a further identification: $\colim_{\sOGF}\Spec( R^0_H(X))\cong \Spec(\lim_{\sOGF^\op} R^0_H(X))$.}:
	 \[ \Spec\left(\lim_{\sOGF^\op} R^0_H(X)\right)\to \Spec(R^0_G(X)).\]
\end{cor}

For $M\in\sF^{\mathrm{Nil}}$, the minimal integer $n$ satisfying
\Cref{thm:fnil-three}\eqref{it:fnil-ss} is called the \emph{$\sF$-exponent} of $M$.
We include various characterizations of this numerical invariant below. 

We also prove an analog of \Cref{thm:gen-artin} and \Cref{thm:gen-fiso} which 
involves an \emph{end} rather than an inverse limit over $\sOGF^{op}$.
This recovers the original formulas of  Quillen \cite{Qui71b} and Hopkins-Kuhn-Ravenel
\cite{HKR00}, and is nontrivial even when $\sF$ is the family of all subgroups. For $H \leq G$ and $M$ a $G$-spectrum, recall that we write $M^H$ to denote the
$H$-fixed point spectrum of $M$, i.e., the spectrum of equivariant maps $G/H_+ \to M$. 

\begin{thmA}[See \Cref{endformula}] 
\label{thm:thmendinentro}
Let $R$ be a homotopy commutative $G$-ring spectrum and 
$X$ a finite $G$-CW complex. Assume that $R$ is $\sF$-nilpotent.
Then the natural map 
\[ \phi_{\sF} \colon R_G^*(X) \to  \int_{\sOGF^{op}} (R^H)^*( X^H)  \]
has the following two properties: 
\begin{enumerate}
\item  $\phi_{\sF} \otimes_{\mathbb{Z}} \mathbb{Z}[1/|G|]$ is an isomorphism.
\item 
The map $\phi_{\sF}$ is a uniform $\cN$-isomorphism 
 and 
 for any prime number $p$, $(\phi_{\sF})_{(p)}$ is a uniform
 $\cF_p$-isomorphism.
\end{enumerate}
\end{thmA}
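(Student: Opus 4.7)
The plan is to deduce \Cref{thm:thmendinentro} from \Cref{thm:fnil-three} under the implicit hypothesis that $R$ is $\sF$-nilpotent (paralleling the hypotheses of \Cref{thm:gen-artin} and \Cref{thm:gen-fiso}). The approach realizes $\phi_\sF$ as the restriction map in $R^*_G$ along a canonical $G$-map of spaces $\eta\colon Y \to X$.

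First, I would let $Y$ denote the $G$-space associated, via Elmendorf's theorem, to the left Kan extension along $\sOGF^{op} \hookrightarrow \sO(G)^{op}$ of the restricted fixed-point diagram $G/H \mapsto X^H$. Since the inclusion is fully faithful, $Y^H \simeq X^H$ for every $H \in \sF$, and the natural transformation from the fixed-point diagram of $Y$ to that of $X$ is represented by a canonical $G$-map $\eta\colon Y \to X$. A routine unwinding then identifies $\eta^\ast$ with $\phi_\sF$: presenting $Y$ as a coend
\[ Y \simeq \int^{G/H \in \sOGF}(G/H) \times X^H \]
and applying $F(-,R)^G$ turns this into a homotopy end whose homotopy groups compute the target $\int_{\sOGF^{op}}(R^H)^*(X^H)$.

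Next, the cofiber $C\eta$ is $\sF$-acyclic, since $(C\eta)^H \simeq \ast$ for every $H \in \sF$ by construction. Using the $\sF$-nilpotence of $R$ together with \Cref{thm:fnil-three}\eqref{it:fnil-reg}, I would deduce $F(C\eta, R)^G \simeq 0$: any finite $\sF$-acyclic $G$-CW complex is built from cells of isotropy outside $\sF$, and nilpotence of each Euler class $e_{\tilde\rho_K}$ on $R$ for $K \notin \sF$ trivializes the $R$-cohomology contribution of any such cell. Hence $\eta^\ast = \phi_\sF$ is an isomorphism, which is strictly stronger than both the asserted uniform $\cN$-isomorphism and the $|G|$-inverted isomorphism.

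The hardest step will be the identification $\eta^\ast = \phi_\sF$. The end $\int_{\sOGF^{op}}(R^H)^*(X^H)$ has $R^H$ and $X^H$ appearing with opposite variances, and it must be checked that this end matches the one obtained from the coend presentation of $Y$ after dualizing via $F(-,R)^G$; this will require a careful twisted-arrow-category argument to track the mixed variance and a standard convergence argument for the resulting end spectral sequence on a finite-dimensional $G$-CW complex. Once this matching is in place, the $\sF$-nilpotence input is purely formal.
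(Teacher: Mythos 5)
There is a genuine gap, and the conclusion you reach is in fact false. Your construction of $Y$ (which is just $E\sF\times X$ with $\eta$ the projection) and the vanishing of $F(\Sigma^\infty C\eta,R)$ for $\sF$-nilpotent $R$ do correctly show that $\eta^*\colon R_G^*(X)\to R_G^*(Y)$ is an isomorphism. But $R_G^*(Y)$ is the homotopy groups of a \emph{derived} (homotopy) end over the twisted arrow category, not the strict end $\int_{\sOGF^{op}}(R^H)^*(X^H)$ appearing in the statement. The strict end is only the $E_2^{0,*}$-line of the $\twarr(\sOGF^{op})$-homotopy limit spectral sequence \eqref{endss} converging to $R_G^*(Y)$, and $\phi_{\sF}$ factors as $R_G^*(X)\xrightarrow{\ \sim\ }R_G^*(Y)\twoheadrightarrow E_\infty^{0,*}\hookrightarrow E_2^{0,*}$. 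The edge map is in general neither injective nor surjective: already for $X=\ast$ the end collapses to $\lim_{\sOGF^{op}}\pi_{-*}^HR$, and the paper's $Q_8$ computation (\Cref{ex:quaternions}) exhibits a nontrivial nilpotent kernel and a cokernel. So "$\phi_{\sF}$ is an isomorphism" cannot be the output of a correct argument; the whole content of the theorem is controlling the failure of the edge map to be an isomorphism.

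Concretely, what is missing from your sketch is everything the paper does after the (correct) reduction to the homotopy end: (a) the spectral sequence must be shown to degenerate at a finite page with a horizontal vanishing line, which the paper gets from \Cref{endssdegenerate} by placing $\Sigma^\infty_+X^{(-)}$ in the thick subcategory generated by corepresentables --- this is where finite-dimensionality and (initially) isotropy in $\sF$ enter, with a separate inverse-limit argument (\Cref{Wlimit}) to remove the isotropy hypothesis; and (b) the higher Hochschild--Mitchell cohomology groups in positive filtration must be shown to be $|G|$-power torsion (\Cref{extendsmacktorsion}), which uses that the coefficient bifunctor extends to a Mackey functor in one variable and the splitting of $\mack_G[1/|G|]$. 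Only with both inputs does one run the filtration/Leibniz argument of \Cref{thm:gen-gen-artin,thm:gen-gen-fiso} to obtain the $\mathbb{Z}[1/|G|]$-isomorphism and the uniform $\cN$- and $\cF_p$-isomorphism statements. (A minor further inaccuracy: $C\eta$ need not be built from cells with isotropy outside $\sF$; the correct reason $F(\Sigma^\infty C\eta,R)\simeq\ast$ is that $\Res_H^GC\eta$ is $H$-contractible for $H\in\sF$, so $E\sF_+\wedge C\eta\simeq\ast$ and one uses $\sF$-completeness of $R$.)
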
 

In \cite[Theorem 6.2]{Qui71b}, rather than
assuming that $X$ is a finite $G$-CW complex, Quillen assumes more generally that $X$ is
compact. In addition, in the end diagram, Quillen replaces $X^H$ with the
discrete space $\pi_0(X^H)$; since Quillen works with mod $p$ cohomology this
does not change (1) or (2) above.

We can identify the derived defect bases for many $G$-equivariant ring spectra
of interest. These are listed in \Cref{fig:examples}, where we set the notation
for the relevant families of subgroups in \Cref{fig:families}. Many of these
examples arise from non-equivariant ring spectra by taking their associated
Borel theories as in \cite[\S 6.3]{MNNa}. There, as above, we are letting $\bb{M}$ denote the \emph{Borel-equivariant $G$-spectrum associated to a spectrum} $M$ with a $G$-action. All of the examples above come from spectra with trivial $G$-actions, in which case this equivariant cohomology theory theory is defined so that, for a $G$-spectrum $X$,
\[ \bb{M}^*_G(X)=M^*(EG_+\wedge_G X).\] 

In  \Cref{fig:examples}, note first that if $R\in \SpG$ is $\sF$-nilpotent, then its Borel completion $\bb{R}$ is automatically $\bb{\sF}$-nilpotent (i.e., we only need to consider the $p$-groups in $\sF$, as $p$ varies over the primes dividing $|G|$). The notation respects localization in the following sense: if $\bb{R}$ is $\bb{\sF}$-nilpotent, then $\bb{R_{(p)}}$ (resp.\ $\bb{R[1/n]}$) will automatically be $\sF_{(p)}=\bb{\sF}_{(p)}$-nilpotent (resp.\ $\bb{\sF}[1/n]$-nilpotent).  These results are immediate consequences of \Cref{prop:borel-sphere} and allow one to determine derived defect bases for Borel-equivariant $G$-spectra via arithmetic fracture square arguments. 

Finally, we demonstrate a connection (displayed in \Cref{fig:examples})
between the `chromatic complexity' of a $G$-spectrum $E$ and the complexity of
$E$'s derived defect base. More precisely, we show in \Cref{prop:ln-local} that
if a spectrum $E$ is $L_n$-local, then the Borel spectrum $\bb{E}$ is $\sAbpn$-nilpotent. This
result relies on 
the `character theory' of Hopkins-Kuhn-Ravenel
\cite{HKR00} and the Hopkins-Ravenel smash product theorem. 


%

\begin{figure}
\centering 
\begin{subfigure}{0.4\textwidth}
\centering
\scalebox{0.8}{
\tabulinesep = 5pt
\begin{tabu}{ X[1, c]  X[3, c] }
\hline
Notation & Definition of family\\
\hline
 $\sAll$ & All subgroups \\
$\sP$ & Proper subgroups \\ 
$\sTriv$ & Only the trivial subgroup \\ 
$\sAb$ & Abelian subgroups\\
$\sAbn$ & Abelian subgroups which can be generated by $n$ elements \\
$\sC=\sAbone$ & Cyclic subgroups\\
$\sE$ & Subgroups of the form $C_p^{\times n}$ for some prime $p$ and some $n$\\
$\sF(K)$ & Subgroups in $\sF$ which are subconjugate to $K\leq G$\\
$\bb{\sF}$ & Subgroups $H$ in $\sF$ such that $|H|=p^n$ for some prime $p$ and some $n$\\
$\sF_{(p)}$ & Subgroups $H$ in $\sF$ such that $|H|=p^n$ for some $n$ \\
$\sF[1/n]$ & Subgroups $H$ in $\sF$ such that $n\nmid |H|$ \\ 
$\sF_1\cup \sF_2$ & Subgroups $H$ in either $\sF_1$ or $\sF_2$ \\
\hline
\end{tabu}
}
	\subcaption{\label{fig:families} Families of subgroups.}
\end{subfigure}%
\begin{subfigure}{0.55\textwidth}
\centering
\scalebox{0.8}{
\renewcommand{\arraystretch}{1.2}
\begin{tabu}{ c  c  c }
\hline
$G$-Spectrum $R$ & Derived defect base & Proof of claim\\
\hline
$S, S\otimes \bQ$ & $\sAll$ & \Cref{prop:sphere}\\
$K\bR\ (G=C_2)$ & $\sTriv$  & \Cref{prop:kr-defect-base}\\
$\MUR\ (G=C_2)$ & $\sAll$  & \Cref{prop:do-not-descend}\\
$\MUG, \MOG$ & $\sAll$ & \Cref{prop:do-not-descend}\\
$H\mathbb{Z}$  & $\bb{\sAll}$ & \Cref{prop:constant-examples}\\
$H\mathbb{Q}$  & $\sTriv$ & \Cref{prop:constant-examples}\\
$\KOG,\KG$ & $\sC$ &  \Cref{prop:kg-is-in-cnil}\\
$\koG,\kG$ & $\sC\cup \sE$ & \Cref{prop:greenlees-kg-nil} \\
\hline
$\bb{S}$ & $\bb{\sAll}$ & \Cref{prop:borel-sphere}\\ 
$\bb{S\otimes \bQ}$ & $\sTriv$ & \Cref{prop:borel-sphere}\\ 
$\bb{MU}$ & $\bb{\sAb}$ &  \Cref{prop:mu-is-anil}\\
$\bb{H\bF_p}$ & $\sEp$ &  \Cref{prop:hfp-is-epnil}\\
$\bb{H\bZ}$ & $\sE$ &  \Cref{prop:HZ-is-enil} \\
$\ku$ & $\sE\cup \bb{\sC}$ & \Cref{prop:ku-is-in-secnil}\\
$\bb{BP\langle n\rangle}$ &  $\sEp \cup \sAbpn$ & \Cref{prop:BPn}\\
$\Kn$ & $\sTriv$ & \Cref{prop:kn-defect-base}\\
$\bb{T(n)}$ & $\sTriv$ & \Cref{prop:kn-defect-base}\\
$\bb{E_n}$ &  $\sAbpn$ & \Cref{prop:JW-theory}\\
$\bb{L_n S}$ & $\sAbpn$ &  \Cref{prop:ln-local}\\ 
$\ko$ & $\sE\cup \bb{\sC}$ & \Cref{prop:KO-is-in-cnil}\\
$\bb{KO},\bb{KU}$ & $\bb{\sC}$ & \Cref{prop:KO-is-in-cnil}\\
$\Tmf, \TMF$ &  $\bb{\sAb^2}$ & \Cref{prop:non-connective-variants-of-tmf}\\
$\tmf$  & $\sE\cup \bb{\sAb^2}$ &  \Cref{prop:connective-tmf}\\
$\MO$ &  $\sE_{(2)}$ & \Cref{cor:mo-is-e2-nil}\\
$\MSO$ &  $\sE_{(2)} \cup \bb{\sAb}[1/2]$ & \Cref{prop:mso-nilpotence}\\
$\bb{\MSp[1/2]}$ & $\bb{\sAb}[1/2]$ & \Cref{cor:msp-nilpotence}\\
$\MSpin$ &  $\sE_{(2)} \cup \sC_{(2)} \cup \bb{\sAb}[1/2]$ & \Cref{prop:spin-bordism}\\
$\bb{MO\langle n\rangle [1/2]} \ (n\geq 2)$ & $\bb{\sAb}[1/2]$ & \Cref{prop:MOn}\\
$\bb{MU\langle n\rangle}$ &  $\bb{\sAb}$ & \Cref{prop:MUn}\\
\noalign{\smallskip}
\hline
\end{tabu}
}
	\subcaption{\label{fig:examples} Derived defect bases for some $G$-ring spectra.}
\end{subfigure}
\caption[]{}
\end{figure}


\subsection{Related work}
\label{relatedwork}
There is a large body of work 
around questions of recovering equivariant cohomology theories from
suitable subgroups; we summarize some of it below. 

\newcommand{\scg}{\mathscr{O}_{\cC}(G)}
In this paper, we only consider \emph{families} of subgroups. 
One can instead work more generally with \emph{collections} $\cC$ of subgroups of a finite
group $G$, which by
definition are only required to be closed under conjugation. The question of
decomposing homology and cohomology in terms of collections has been extensively
studied starting with \cite{Dw97, Dw98}. 
Given a collection $\cC$, one defines the $\cC$-orbit category $\scg$
analogously and one has a 
map of $G$-spaces
\begin{equation} \label{collmap} \hocolim_{G/H \in \scg} G/H \to \ast.
\end{equation}
The collection $\cC$ is said to be \emph{ample} if the induced map on
$G$-homotopy orbits induces an equivalence after applying singular chains, i.e., 
\begin{equation} \label{collmap2} \hocolim_{G/H \in \scg} C_*(BH;
\mathbb{F}_p) \to C_*(BG; \mathbb{F}_p);   \end{equation}
note that when $\cC$ is a family 
then
both
\eqref{collmap}, \eqref{collmap2} are automatically equivalences. 
For an ample collection $\cC$, we obtain colimit spectral sequences for the homology of
$BG$ from \eqref{collmap2}, and  $\cC$
is said to be \emph{subgroup-sharp} if it collapses at $E_2$ on the zero-line.
In particular, in this case one obtains an exact description of the homology (or
cohomology) of $G$ in terms of the homology of $H \in \cC$, i.e.,
$\varinjlim_{G/H \in \scg} H_*(BH; \mathbb{F}_p) \simeq H_*(BG; \mathbb{F}_p)$. 

The collection of $p$-subgroups is subgroup-sharp, essentially by the
Cartan-Eilenberg stable elements formula \eqref{CEformula}. 
There are many examples of 
subgroup-sharp collections $\cC$ which are strictly contained in the collection
of $p$-subgroups. 
These ideas originated in \cite{JM92, Dw97, Dw98} and have since extended further and
improved; see
\cite{GS06, Gro02}. 

Our setting differs from the theory of homology decompositions in the following
ways. 
\begin{enumerate}
\item  
First, we work only with families (rather than collections) of
subgroups. Thus, there is no analog of the condition of ampleness.
\item
In the setting of 
\emph{sharp} homology decompositions, the colimit spectral sequences (as in \eqref{collmap2}
or variants) collapses at $E_2$ at
the zero-line.
Therefore, one obtains precise decompositions of the homology or cohomology of
$BG$. 
Sometimes one also considers more general settings (see \cite[Theorem 1.1 and
Remark 3.11]{Gro02}) where one has a horizontal vanishing line at $E_2$. 

In our setting, by contrast, the limit and colimit spectral sequences are often
very infinite at $E_2$ (see Appendix B for an example), but are only required to collapse at some finite stage. For this reason, at
the level of equivariant homology and cohomology, we do not obtain
exact
decompositions, but rather 
$\cN$-isomorphisms.
This is  a fundamental feature of our setup. 
\item 
 Finally, the theory of homology decompositions usually
relates $H^*(BG; \mathbb{F}_p)$ to the cohomology of various $p$-subgroups of
$G$, thereby providing strong refinements of the Cartan-Eilenberg stable
elements formula \eqref{CEformula}. By contrast, our results apply  when $G$ is a  
$p$-group (in fact, for Borel-equivariant theories, all questions can be reduced
to ones involving $p$-groups thanks to \Cref{prop:borel-sphere} below).

\end{enumerate}

In particular, we emphasize that many of the ideas that occur in this paper
(such as the use of the homotopy limit and colimit spectral sequences) are far
from new in this context. The main idea we use that is new here (although not in
other contexts, such as chromatic homotopy theory) is the theory of \emph{nilpotence}
(which we discuss at length in the companion paper \cite{MNNa} in an axiomatic
setting).

Many other authors have considered the setting of families of subgroups, and 
for more general equivariant homology theories. In particular, 
results similar to \Cref{thm:gen-artin,thm:gen-fiso,thm:thmendinentro} have been established by various authors:
\begin{itemize}

	\item  Segal proves the analog of \Cref{thm:gen-fiso} for $G$-equivariant
	$K$-theory for a general compact Lie group when $X$ is a point and $\sF$ is
	the family of topologically cyclic subgroups with finite Weyl groups
	\cite[Prop.~3.5]{Seg68b}. Segal also proves an analog of Brauer's theorem in this setting \cite[Prop.~3.11]{Seg68b}. 

	\item The most celebrated form of \Cref{thm:gen-fiso}
and \Cref{thm:thmendinentro}	
	is \cite[Thm.~6.2]{Qui71b}. There, Quillen proves this result in the case $M=\bb{H\bF_p}$, $\sF=\sEp$,  $G$ is a compact Lie group, and $X$ is a $G$-CW complex of finite mod-$p$ cohomological dimension. 
	In the case $X=*$, Quillen also proves this result in the case $G$ is a
	compact Hausdorff topological group with only  finitely many conjugacy classes of
	elementary abelian subgroups \cite[Prop.~13.4]{Qui71b}, along with  an extension to the case where $G$ is a discrete subgroup with a finite index subgroup $H$ of finite mod-$p$ cohomological dimension \cite[Thm.~14.1]{Qui71b}.  

	Quillen's seminal work underlies all of the following research in this direction including our own. This paper owes a tremendous debt to him. 

	\item Bojanowska and Jackowski prove \Cref{thm:gen-fiso} in the case $M=\KG$, $\sF=\sC$, and $X$ is a finite $G$-CW complex. They also prove that the homotopy limit spectral sequence has the desired abutment \cite{BoJ80}. 

	\item Greenlees and Strickland prove a result similar to 
\Cref{thm:thmendinentro} and 	
	\Cref{cor:varieties} in the case that $M=\bb{E}$, $E$ is a complex oriented ring spectrum with formal properties similar to $E_n$, $X$ is a finite $G$-CW complex, and $\sF=\sAbpn$ \cite[Thm.~3.5]{GS99}.  They also obtain suitable extensions when $G$ is a compact Lie group \cite[App.~C]{GS99}. 

	\item Hopkins, Kuhn, and Ravenel prove \Cref{thm:gen-artin}
and the $\mathbb{Z}[1/|G|]$-local part of 
\Cref{thm:thmendinentro}
	in the case where  $M=\bb{E}$, $E$ is a complex oriented ring spectrum, $\sF=\bb{\sAb}$, and $X$ is a finite $G$-CW complex \cite[Thm.~A and Rem.~3.5]{HKR00}.

	\item In \cite{Fau08} Fausk shows that \cite[Thm.~A]{HKR00} can be generalized in several ways if one makes some additional assumptions. First, Fausk proves the analogue of \Cref{thm:gen-artin} when $M=\KG$ and
	$G$ is a compact Lie group. Moreover, Fausk proves \Cref{thm:gen-artin} when
	$M=\bb{E_n}$ (or a closely related ring spectrum), $G$ is a finite group, $\sF=\sAbpn$, and $\pi_*^G M$ is torsion-free (e.g., when $G$ is a good group in the sense of \cite{HKR00}). Fausk also obtains generalized Brauer induction theorems in these contexts. Fausk's results do not require a finiteness assumption on $X$.
\end{itemize}



\subsection*{Organization}\label{sec:organization}
In \Cref{sec:thick}, we will analyze the class $\sFNil$ of $G$-spectra and
prove \Cref{thm:fnil-three}. We break this proof into two parts. In
\Cref{sec:fnil-euler}, we prove the equivalence of Conditions
\eqref{it:fnil-def} and \eqref{it:fnil-reg} of \Cref{thm:fnil-three}
(\Cref{thm:identify-fnil}) as well as some immediate consequences. In
\Cref{sec:fnil-holim}, we prove the equivalence of Conditions
\eqref{it:fnil-def} and \eqref{it:fnil-ss} (\Cref{thm:fnil-three-prime}).

In \Cref{sec:ss}, we will analyze the homotopy colimit and homotopy limit
spectral sequences. This will lead to proofs of \Cref{thm:gen-artin},
\Cref{thm:gen-fiso}, and \Cref{cor:varieties} in \Cref{sec:calc-thms}.  Along the way we will prove \Cref{prop:stratification}, which is the appropriate analogue of Quillen's stratification theorem \cite[Thm.~8.10]{Qui71b} in this context. 
In \Cref{sec:endthm}, we will prove \Cref{thm:thmendinentro}, which will
require some additional work.


In \Cref{sec:split} we show that derived induction and restriction theory
generalizes classical induction and restriction theory and reduces to it exactly for 
$\sF$-nilpotent spectra of exponent at most one. 
We show that one can use the calculation of the derived defect base of a $G$-ring spectrum to put an upper bound on its defect base (\Cref{prop:hyper-induction}). As applications, we obtain a generalized hyperelementary induction theorem similar to Brauer's theorem (\Cref{thm:split-brauer}) and triangulated descent results in the sense of Balmer (\Cref{prop:balmer}).

In the last two sections of the main body of hte text, we prove all of the remaining claims
in \Cref{fig:examples}. In \Cref{sec:oriented}, we show how the existence of
Thom isomorphisms can be used to show a $G$-ring spectrum is $\sF$-nilpotent.
We then combine these results with non-equivariant thick subcategory arguments to determine the derived defect bases of  the remaining examples.

In the appendices we gather several auxiliary results for working with the $\sF$-homotopy limit spectral sequences and work through a nontrivial example for equivariant topological $K$-theory.

\subsection*{Acknowledgments} 
The authors would like to thank John Greenlees, Jesper Grodal, Hans-Werner Henn, Mike Hopkins, Peter May, 
Charles Rezk, and Nat Stapleton for helpful comments related to this project. We would also like to thank Koen van Woerden for proofreading an earlier draft of this paper. Finally we would like to thank the referee for providing helpful remarks on this paper.

\subsection*{Conventions}
Throughout this paper, $G$ will denote a finite group and $\sF$ a family of subgroups of $G$.
For two $G$-spectra $X$ and $Y$ we will let $F(X,Y)\in \SpG$ denote the internal function $G$-spectrum. Unless we believe it to be helpful to  the reader, we will generally suppress the functors $\Sigma^\infty$ and $\Res_K^G$ from our notation.


A $G$-ring spectrum $R$ will always be a $G$-spectrum equipped with a homotopy associative and unital multiplication, i.e., an associative algebra in $\ho(\SpG)$. We will say that $R$ is homotopy commutative if the multiplication is commutative in $\ho(\SpG)$. An $R$-module will be an object of $\ho(\SpG)$ equipped with a left action of $R$ satisfying the standard associativity and unit conditions. 
We will use the adjective \emph{structured} when we want to talk about the 
$\infty$-categorical or model categorical notion of module. 


\section{The thick $\otimes$-ideal $\sFNil$}\label{sec:thick}
In this section, we give the main characterizations of $\sF$-nilpotence and
prove \Cref{thm:fnil-three}. 
\subsection{The characterization of $\sFNil$ in terms of Euler classes}\label{sec:fnil-euler}\ \\

In this subsection, we will prove the equivalence of Conditions
\eqref{it:fnil-def} and \eqref{it:fnil-reg} from \Cref{thm:fnil-three} in
\Cref{thm:identify-fnil} below. First, we will require some elementary properties of representation spheres. 

\begin{defn}
	For a finite-dimensional
orthogonal representation $V$ of $G$, we let
$nV$ denote $V^{\oplus n}$.
We let  $S(V)$ denote the unit $G$-sphere of $V$ and 
		$S^V$ denote the pointed $G$-space obtained as the one-point compactification of $V$, where we take the point at $\infty$ to be the basepoint. 
Finally, we let $e_V$, the \emph{Euler class} of $V$, denote the pointed $G$-map
		\[ e_V\colon S^0\rightarrow S^V\]
		induced by the inclusion $0\rightarrow V$.
\end{defn}

We now recall the following standard results.
\begin{prop}\label{prop:props-of-reps}
	Let $V$ be a finite-dimensional orthogonal representation of $G$. Then: 
	\begin{enumerate}
		\item If $V$ contains a trivial summand, then $e_V$ is $G$-equivariantly homotopic to the trivial map.
		\item The $G$-space $S^V$ is the cofiber of the nontrivial map $S(V)_+\rightarrow S^0$.
		\item\label{it:isotropy} The $G$-space $S(V)$ admits a finite $G$-CW
		structure constructed from  cells of the form \[G/H\times S^{n-1}\rightarrow
		G/H\times D^{n}\] where $H$ is a subgroup such that $V^H\neq\{ 0\}$
		and $n<\dim V^H$. Compare \cite[Exer.~ II.1..~ II.1.10]{Die87a}
		and \cite{Ill83}.
		\item For every $n\ge 0$, we have $e_V^{n}\simeq e_{nV}$.
	\end{enumerate}
\end{prop}

We now prove the main characterization of $\sF$-nilpotence (see \Cref{def:fnil}) in terms of Euler
classes. 
 \begin{thm}\label{thm:identify-fnil}
 	A $G$-spectrum $M$ is $\sF$-nilpotent 
if and only if, for all subgroups $K \leq G$ with $K \notin \sF$,
there exists an integer $n$
such that the Euler class $e_{n\tilde{\rho}_K}\colon S^0\rightarrow S^{n\tilde{\rho}_K}$ is null-homotopic after smashing with $\Res_K^G M$. 
	\end{thm}
 \begin{proof}
	Let $\sFNilPr\subseteq \SpG$ denote the full subcategory spanned by the
	$M\in \SpG$ satisfying the Euler class condition of the theorem. 
	It is easy to see that 
	$\sFNilPr$ is a thick $\otimes$-ideal.
 	We need to show that $\sFNil = \sFNilPr$. 

	For a subgroup $H \leq G$, let $\mathcal{P}_H$ denote the family of
	proper subgroups of $H$.
Observe that $M \in \sFNilPr$ if and only if, for every 	$H \leq G$
not in $\sF$, we have $\Res^G_H M \in \mathcal{P}_H^{\mathrm{Nil}'}$.
Moreover, one has a similar statement for $\sF$-nilpotence: by
\cite[Prop.~\ref{S-reducetopropersubgroups}]{MNNa}, $M \in \sFNil$ if and only if
for every subgroup $H \notin \sF$, $\Res^G_H M \in
\mathcal{P}_H^{\mathrm{Nil}}$.

It thus suffices to consider the case where $\sF = \mathcal{P}_G$. 
In other words, we need to show that the thick $\otimes$-ideal generated by 
$\left\{G/H_+\right\}_{H < G}$ is equal to $\mathcal{P}_G^{\mathrm{Nil}'}$.
Observe first that the Euler class $e_{\widetilde{\rho}_G}$ becomes
null-homotopic after smashing with $G/H_+$ for any $H < G$. This follows
because for any $H < G$, $\Res^G_H e_{\widetilde{\rho}_G}$ is null-homotopic as
the $H$-representation $\Res^G_H \widetilde{\rho}_G$ contains a trivial summand.
Here we use the relationship between smashing with $G/H_+$ and restricting to
$H$-spectra; compare \cite[Thm.~1.1]{BAS} and \cite[Thm.~\ref{S-restensoring}]{MNNa}. Therefore, we get $G/H_+ \in
\mathcal{P}_G^{\mathrm{Nil}'}$, so that $\mathcal{P}_G^{\mathrm{Nil}} \subset
\mathcal{P}_G^{\mathrm{Nil}'}$. 

We now prove the opposite inclusion. Suppose $M \in
\mathcal{P}_G^{\mathrm{Nil}'}$. Then there exists $n$ such that $\mathrm{id}_M \wedge e_{n
\widetilde{\rho}_G}$ is null-homotopic, and the cofiber sequence
\[ S(n \widetilde{ \rho}_G)_+ \wedge M \to M  
\xrightarrow{\mathrm{id}_M \wedge e_{n
\widetilde{\rho}_G}} 
M   \wedge S^{n \widetilde{\rho}_G}  \]
shows that $M$ is a retract of 
$S(n \widetilde{ \rho}_G)_+ \wedge M$. Since $\widetilde{\rho }_G$ has no non-trivial fixed
points, 
$S(n \widetilde{ \rho}_G)_+  \in \mathcal{P}_G^{\mathrm{Nil}}$ in view of the
cell decomposition 
given in \Cref{prop:props-of-reps}.
Therefore, 
$S(n \widetilde{ \rho}_G)_+ \wedge M$ is $\mathcal{P}_G$-nilpotent, and thus its retract
$M$ is too. 
 \end{proof}
\begin{remark}\label{rem:endomorphism-rings}
	If we regard $e_{\tilde{\rho}_K}$ as an element of the `$RO(K)$-graded homotopy groups' \cite[\S 6]{Ada84}, $\pi_\star^K S$, of the sphere spectrum, then after smashing $e_{\tilde{\rho}_K}$ with $M$ we obtain an element in $\pi_\star^K F(M,M)$, the `$RO(K)$-graded homotopy groups' of the endomorphism ring of $M$. This element can also be identified with the image of $e_{\tilde{\rho}_K}$ under the unit map $S\rightarrow F(M,M)$. 

	Identifying $e_{\tilde{\rho}_K}$ with its image, we can now restate the null-homotopy condition of \Cref{thm:identify-fnil} in either of the following equivalent ways:
	\begin{enumerate}
		\item $e_{\tilde{\rho}_K}\in \pi_\star^K F(M,M)$ is nilpotent, or 
		\item $F(M,M)[e_{\tilde{\rho}_K}^{-1}]\simeq *\in \SpK$.
	\end{enumerate}

	While $M\in\sFNil$ implies that $M[e_{\tilde{\rho_K}}^{-1}]$ is
	contractible for each $K\not \in \sF$, the converse does not hold. The
	contractibility of $M[e_{\tilde{\rho}_K}^{-1}]$ is equivalent to knowing
	that every element $x\in \pi_\star^K M$ is annihilated by \emph{some}
	power, possibly depending on $x$, of $e_{\tilde{\rho}_K}$. The condition
	$M\in\sFNil$ tells us that there is a \emph{fixed} power of
	$e_{\tilde{\rho}_K}$ which annihilates all of  $\pi_\star^K M$. 

	On the other hand, when $M=R$ is a $G$-ring spectrum, the two conditions are equivalent because the power of $e_{\tilde{\rho}_K}$ annihilating $1\in \pi_*^K R$ annihilates all of $\pi_*^K R$. 

\end{remark}
\begin{cor}\label{prop:ring-criteria-fnil}
	Suppose that $R$ is a $G$-ring spectrum. Then the following are equivalent:
	\begin{enumerate}
		\item\label{it:prop-ring-1} The $G$-spectrum $R$ is $\sF$-nilpotent.
		\item\label{it:prop-ring-2} For each subgroup $H\not\in \sF$, the image of $e_{\tilde\rho_H}\in \pi_{\star}^H S$ under the unit map $S\rightarrow R$ is nilpotent.
	\end{enumerate}
\end{cor}

\subsection{The $\sF$-homotopy limits and colimits}\label{sec:apps-of-first-equivalence}
We will now  precisely define the homotopy colimits and limits mentioned in
the introduction in \eqref{eq:derived-ind-res} and prove they are
equivalences when $M$ is $\sF$-nilpotent. 

We denote the category of $G$-spaces by $\Top_G$. As usual, let $\sOG\subseteq
\Top_G$ (the \emph{orbit category}) denote the full subcategory of $\Top_G$ 
spanned by the transitive $G$-sets. To a family $\sF$ we associate the full
subcategory $\sOGF \subset \sOG$  spanned by the transitive $G$-sets whose isotropy lies in $\sF$.

Let $i\colon \sOGF\rightarrow \Top_G$ denote the inclusion. We associate to
$\sF$ a $G$-space $E\sF:=\hocolim_{\sOGF} i$. We also define a pointed
$G$-space $\wt{E}\sF$ as the homotopy cofiber of the unique nontrivial map $E\sF_+\rightarrow S^0$. These $G$-spaces are determined up to canonical equivalence by the following properties (see \Cref{sec:esf} and \cite[Def.~II.2.10]{LMS86}):
\begin{equation}
E\sF^K  \simeq 
	\begin{cases} 
		* & \mbox{if } K\in \sF\\
		\emptyset & \mbox{otherwise}
	\end{cases}\label{eq:EF-univ-prop}
\quad \quad \quad \wt{E}\sF^K  \simeq 
	\begin{cases} 
		* & \mbox{if } K\in \sF\\
		S^0 & \mbox{otherwise.}
	\end{cases}	
\end{equation}

For the family $\sP$ of all proper subgroups of $G$, these spaces 
admit a particularly simple construction.

\begin{prop}\label{prop:model-for-proper-subgroups}
	There are canonical equivalences  \[E\sP\simeq \hocolim_n
	S(n\tilde{\rho}_G) \quad  \mathrm{and} \quad \wt{E}\sP\simeq \hocolim_n S^{n\tilde{\rho}_G}\simeq S[e_{\tilde{\rho}_G}^{-1}].\] Here the homotopy colimits are indexed over the maps induced by the inclusions $n\tilde{\rho}_G\rightarrow (n+1)\tilde{\rho}_G$.
\end{prop}
\begin{proof}
	We just need to check that the homotopy colimits have the correct fixed
	points. Since fixed points commute with homotopy colimits, this follows from \Cref{prop:props-of-reps} and the following observation: $\tilde{\rho}_G^K$ is 0-dimensional if and only if $K=G$.
\end{proof}

We recall the significance of the objects $E \sF_+$ and $ \widetilde{E} {\sF}$ in the
general theory, cf. \cite[\S
\ref{S-ss:familiespaper1}]{MNNa}.
Let $\mathrm{Loc}_{\sF}$ denote the \emph{localizing} subcategory of $\GSpec$
generated by the $\left\{G/H_+\right\}_{H \in \sF}$. It is equivalently
the localizing tensor-ideal
generated by 
the commutative algebra object
\( A_{\sF} := \prod_{H \in \sF} F(G/H_+, S) \in \GSpec, \)
which we call \emph{$A_{\sF}$-torsion objects} in  \cite[Def~\ref{S-def:torsionobj}]{MNNa}
as it extends ideas of \cite{DwG02} in the case of module categories. 
The inclusion $\mathrm{Loc}_{\sF} \subset \GSpec$ admits a right adjoint given
by \emph{$\sF$-colocalization} \cite[Construction 3.2]{MNNa}; the right adjoint is given explicitly by $X
\mapsto E \sF_+ \wedge X$. In particular, $X \in \mathrm{Loc}_{\sF}$ if and
only if 
the natural map
\[  E \sF_+ \wedge X \to X \]
is an equivalence. We also have the subcategory of \emph{$\sF$-complete}
$G$-spectra, i.e., those $G$-spectra complete 
with respect to the
algebra object $A_{\sF}$ \cite[Sec.~\ref{S-sec:axiomatic}]{MNNa}. The
$G$-space $E\sF$ also controls the theory of $\sF$-completeness: a
$G$-spectrum $X$ is $\sF$-complete if and only if the natural map
\[ X \to F(E\sF_+, X)   \]
is an equivalence.

We consider finally (cf.\ \cite[Sec.~\ref{S-sec:A-1local}]{MNNa})
the subcategory $\GSpec[\sF^{-1}]$ of those $G$-spectra $Y$ such that $F(X, Y)
\simeq \ast$ for any $X \in \mathrm{Loc}_{\sF}$. 
Then $\GSpec[\sF^{-1}]$ is a \emph{localization} of $\GSpec$, and the
localization is given by the functor $X \mapsto \widetilde{E} \sF \wedge X$. 
The localization functor annihilates \emph{precisely} the localizing
subcategory $\mathrm{Loc}_{\sF}$. 
Note that, by definition \cite[Def.\  \ref{S-Fnildef}]{MNNa}, a $G$-spectrum is $\sF$-nilpotent if and
only if it is $A_{\sF}$-nilpotent.  

Using the general theory of torsion, complete, and nilpotent objects with
respect to a \emph{dualizable} algebra object, we now record the  following
list of properties of $\sFNil$. 
\begin{prop}\label{prop:Fnil-implies-acyclic}
\label{prop:genlist}
\label{prop:end-m}
\begin{enumerate}
\item 
	If $M$ is an $\sF$-nilpotent $G$-spectrum, then $\wt{E}\sF\wedge M$ is
	contractible, and thus the map $M \wedge E \sF_+ \to M$ is an equivalence. 
	Similarly, the map $M \to F(E\sF_+, M)$ is an equivalence.
	\item 
If $M$ is  a $G$-ring spectrum with $\widetilde{E} \sF \wedge M$
	contractible, 
	then $M$ is $\sF$-nilpotent.
	\item 
	Let $X$ and $M$ be $G$-spectra. If $M$ is $\sF$-nilpotent, then so is $F(X,M)$.
\item A $G$-spectrum $M$ is $\sF$-nilpotent if and only if the endomorphism $G$-ring spectrum $F(M,M)$ is $\sF$-nilpotent.
	\end{enumerate}
\end{prop}
\begin{proof}
As above, a $G$-spectrum $M$
belongs to the localizing subcategory $\mathrm{Loc}_{\sF}$ generated by the $\left\{G/H_+\right\}_{H
\in \sF}$ if and only if $M \wedge \widetilde{E} \sF$ is contractible (or
equivalently if $ M \wedge E\sF_+ \simeq M$). If $M$
is $\sF$-nilpotent, this is certainly the case. 
If $M \in \sFNil$, then $M$ is also complete with respect to the algebra object
$A_{\sF}$ so that the $\sF$-completion map $M \to F(E\sF_+, M)$ is an equivalence. 

Conversely, if $M$ is a $G$-ring spectrum, then the $\sF^{-1}$-localization of
$M$, i.e., $\widetilde{E} \sF \wedge M$, vanishes  if and only $M $ is $\sF$-nilpotent by
\cite[Thm.~\ref{S-ringobjectnilp}]{MNNa}.

We refer to \cite[Cor.~\ref{S-nil:closedundercotensor}]{MNNa} for the (general)
argument that $\sFNil$ is closed under cotensors. If $M \in \GSpec$ and $F(M,
M) \in \sFNil$, then $M$, as a module over $F(M, M)$, also belongs to $\sFNil$.
This verifies the third and fourth claims. 
\end{proof}


We now construct the derived restriction and induction maps
\eqref{eq:derived-ind-res} in terms of the space
$E \sF$, as $\sF$-colocalization and completion respectively. 
\begin{cons}
We consider now the $\sF$-colocalization map
\( E \sF_+ \wedge M \to M;  \)
since $E \sF= \hocolim_{\sOGF} G/H_+$ and smash products commute with homotopy
colimits, we can write this map as
\begin{equation}  
\label{eq:dind}
\Ind_{\sF}^G \colon \hocolim_{\sOGF} (G/H_+ \wedge M) = E\sF_+\wedge M \to M. 
\end{equation} 
Similarly, we can identify the $\sF$-completion map $M \to F(E \sF_+, M)$;
with the map:
\begin{equation} 
\label{eq:dres}
\Res^G_{\sF}\colon M \to \holim_{\sOGF^{op}} F(G/H_+, M).
\end{equation} 
\end{cons}
\begin{prop}\label{prop:relevance}
	If $M$ is $\sF$-nilpotent, then 
the derived induction and restriction maps 	
\eqref{eq:dind} and \eqref{eq:dres} are equivalences.
\end{prop}

\begin{proof}
This now follows from 
\Cref{prop:Fnil-implies-acyclic}. 
\end{proof}

We round out this subsection with a few basic examples of derived defect bases. 
We remark also that this technique is essentially \cite[Sec. 10]{HHR14}. 
\begin{prop}\label{rem:borel-tate}
	Let $\sF=\sTriv$ be the trivial family of subgroups. Suppose  $\bb{R}$ is a Borel-equivariant $G$-ring spectrum. Then $\bb{R}$ is $\sTriv$-nilpotent if and only if the $G$-Tate construction $( \widetilde{E} \sTriv \wedge \bb{R})^G$ of $R$ is contractible.
\end{prop}
\begin{proof} 
We know that $\bb{R}$ is $\sTriv$-nilpotent if and only if $ \widetilde{E} \sTriv
\wedge \bb{R}$ is contractible by \Cref{prop:Fnil-implies-acyclic}. Since this
is a ring object, it is contractible if and only if its fixed point spectrum is
contractible. 
\end{proof}

\begin{prop}\label{prop:kr-defect-base}
	The derived defect base of $C_2$-equivariant $K\bR$-theory \cite{Ati66a} is $\sTriv$, the trivial family.
\end{prop}
\begin{proof}
We need to show that $K\bR$ is
$\sTriv=\sP$-nilpotent. In view of
\cite[Thm.~\ref{S-geofixedpointscontractible}]{MNNa}, it
suffices to show that the geometric fixed point spectrum $\Phi^{C_2}K\bR=(\wt{E}\sP\wedge R)^{C_2}$ is contractible. In this language the relevant calculation appears in
\cite[Thm.\ 5.2]{Faj95}
and in
\cite[\S 7.3]{HHR11}; however the result follows from \cite[Prop.\ 3.2 and Lem.\ 3.7]{Ati66a}.
In fact, in the proof of \cite[Thm.\ 5.2]{Faj95}, it is observed that the cube of
the Euler class of the reduced regular representation of $C_2$ vanishes in $K\bR$.
\end{proof}

Let $\MOG$ and $\MUG$ denote the genuine $G$-equivariant real and complex cobordism spectra of tom Dieck \cite{toD70,BrH72}. When $G=C_2$, let $\MUR$ denote the real $G$-equivariant complex cobordism spectrum of Landweber \cite{Lan68}.

\begin{prop}\label{prop:do-not-descend}
 	The derived defect base of any of $\MOG$, $\MUG$, and $\MUR$ is $\sAll$, the family of all subgroups of $G$. 
\end{prop}
\begin{proof}
We need to show that there is no proper family $\sF$ such that any of these
$G$-spectra is $\sF$-nilpotent. 
By
	\Cref{prop:ring-criteria-fnil}, to prove this for a
	$G$-ring spectrum $R$, it suffices to show that 
	
	\[ 0\neq\pi_*\Phi^G R\left( \cong \pi_*^G \wt{E}\sP\wedge R\cong \pi_*^GR[e_{\tilde{\rho}_G}^{-1}]\right) .\] 

	In each of the stated cases this is known. The results for $\MOG$ and $\MUG$ are due to tom Dieck and can be found in \cite[\S XV Lem.~3.1]{May96} and \cite[Lem.~2.2]{toD70} respectively. For $\MUR$ this is \cite[Cor.~3.4]{Lan68}. 
\end{proof}

\subsection{The class $\sFNil$ and the homotopy limit spectral sequence}\label{sec:fnil-holim}

Before proving the equivalence of Conditions \eqref{it:fnil-def} and
\eqref{it:fnil-ss} from \Cref{thm:fnil-three} in \Cref{thm:fnil-three-prime}
below, we will give an alternate construction of $E\sF$ and the $\sF$-homotopy
limit spectral sequence, following \cite[Sec. 21]{GrM95}. 

First, we describe another model for $E \sF$.
For a space $Z$, let $d_0\colon Z^{\bul+1}\rightarrow *$ denote the standard augmented simplicial space which in degree $n$ is the $(n+1)$-fold product of $Z$. 


When $Z \neq \emptyset$, we can pick a point in $Z$ to define a section $s_{-1}$ of $d_0$. This section defines an additional degeneracy in each degree, or equivalently a retraction diagram of simplicial spaces
\begin{equation}\label{eq:contracting-homotopy}
 *\xrightarrow{s_{-1}} Z^{\bul+1}\xrightarrow{d_0} *
\end{equation}
with a simplicial homotopy $s_{-1}d_0\simeq \mathrm{Id}$ \cite[\S
III.5]{GoJ99}. We will call an augmented simplicial space admitting extra
degeneracies \emph{split}. 

When $Z$ is a $G$-space, it is necessary and sufficient for $Z$ to have a
$G$-fixed point to split $Z^{\bullet + 1}$ as a simplicial $G$-space. More
generally, if $Z^H \neq \emptyset$ for $ H \leq G$, then $\Res_H^G(Z^{\bul+1})\simeq (\Res_H^G Z)^{\bul+1}$ is split
as a simplicial $H$-space.  This implies that \(G/H\times Z^{\bul+1}\simeq
\Ind_H^G \Res_H^G Z^{\bul+1}\) is split as an augmented simplicial $G$-space.  

\begin{prop}[cf.~{\cite[p.~119]{GrM95}}]\label{prop:retraction}
	Let $\sF$ be a family of subgroups of $G$ and consider the $G$-space
	  \(X=\coprod_{H\in I} G/H,\) 
	  where $I \subset \sF$ contains a representative from each conjugacy class of maximal subgroups in $\sF$.  

	  Then there is an equivalence \[ |X^{\bul +1}|\simeq E\sF .\] Moreover, if
	  $H \in \sF$ then $G/H\times X^{\bul+1}$ is  split.
\end{prop}
\begin{proof}
	This follows easily from the observations above and the characterization of
	$E\sF$ from \eqref{eq:EF-univ-prop}, since taking fixed points commutes with geometric realizations and products. In particular, $|X^{\bul+1}|^H$ is contractible when $X$ has an $H$-fixed point and is empty otherwise. 
\end{proof}


The geometric realization of a simplicial $G$-space, $Z=|W_\bul|$, admits two standard increasing filtrations by $G$-CW subcomplexes. The first is the filtration by dimension: 
\[ F_{-1}Z=\emptyset \subseteq F_0 Z\subseteq \cdots \subseteq F_\infty Z= Z\] and depends on a choice of $G$-CW structure on $Z$. The second arises from the skeletal filtration on $Z$: \[
F^\prime_{-1} Z=\emptyset \subseteq F^\prime_0 Z\subseteq \cdots \subseteq F_\infty^\prime Z = Z.
\]
Here $F^\prime_n Z:=\hocolim_{\Delta^{\op}_{\leq n}} W_\bul$ is the $n$-skeleton of $Z$ and depends on the presentation of $Z$ as the geometric realization of a simplicial $G$-space. 

Fixing a $G$-spectrum $M$ and applying $F(-_+, M)$ to these two filtrations, we
obtain two towers of $G$-spectra, $ \{F(F_n Z_+, M)\}_{n\geq 0}$ and
$\{F(F_n^\prime Z_+, M)\}_{n \geq 0}$. In general, if we apply $\pi_*^G$ to a bounded below tower we obtain an exact couple and an associated spectral sequence conditionally converging to the homotopy groups of the homotopy inverse limit of the tower\footnote{More generally, one can apply $\pi_\star^{(-)}$ to obtain a Mackey functor-valued, $RO(G)$-graded spectral sequence. This variant, although useful, will not be required for this paper.} \cite[\S 7]{Boa99}.

 In the case of the first tower, we are using a $G$-CW filtration on $Z$
 which satisfies \[F_n Z/F_{n-1}Z\simeq \bigvee_{i\in I_n} {G/H_i}_+\wedge
 S^n,\] where  $I_n$ is the set of orbits of $n$-cells of $Z$. The
 $E_1$-complex associated to the tower $\{F(F_n Z, M)\}_{n\geq 0}$ is 
\begin{equation}\label{eq:AHSS-E1}
 E_1^{s,t}=\pi_{t-s}^G F(F_s Z/F_{s-1}Z, M)\cong \prod_{i\in I_s} \pi_t^{H_i} M, 
 \end{equation}
 where the $d_1$-differential is induced by the attaching maps. This yields the
 equivariant analogue of the Atiyah-Hirzebruch spectral sequence whose
 $E_2$-term is, by definition, the \emph{Bredon cohomology} of $Z$ with coefficients in $\Gpi M$:
\begin{equation}\label{eq:AHSS-EF}
	 H^s_G(Z;\pi_t^{(-)} M)\Longrightarrow \pi_{t-s}^G F(Z_+,M).
\end{equation}
If $M$ is a $G$-ring spectrum then \eqref{eq:AHSS-EF} is a spectral sequence of algebras \cite[App.~B]{GrM95}.

For the second tower we obtain the equivariant analogue of the Bousfield-Kan spectral sequence \cite[\S 3]{Bou89}:
\begin{equation}\label{eq:BKSS}
	\pi^s\pi_t^G(F(W_{\bul +};M))\Longrightarrow \pi_{t-s}^G F(Z_+; M).
\end{equation}
Here, the $E_2$-term is the cohomology of the graded cosimplicial abelian
group $\pi_*^G(F(W_{\bul +},M))$.
In \Cref{sec:ss} we will discuss the $E_2$-terms of these two spectral sequences further. 

\begin{prop}\label{prop:AHSS-equals-BKSS}
	Suppose that $Z$ is the geometric realization of a simplicial $G$-space $W_\bul$ and $M$ is a $G$-spectrum. If $W_n$ is discrete for each $n$, then the two spectral sequences \eqref{eq:AHSS-EF} and \eqref{eq:BKSS} are isomorphic from the $E_2$-page on. 
\end{prop}
\begin{proof}
	To compare \eqref{eq:AHSS-EF} and \eqref{eq:BKSS} we would like a map
	between the associated towers. We do have an equivalence $F_\infty Z\simeq
	F_\infty^\prime  Z$, but in general this equivalence need not respect the
	filtrations.  However, when $W_\bul$ is degree-wise discrete, then for each
	$n$,  $F_n^\prime Z$ is $n$-dimensional and the skeletal filtration on $Z$
	is just the dimension filtration for a different choice of $G$-CW structure
	on $Z$. In this case, we can find an equivalence $s\colon F_\infty
	Z\rightarrow F_\infty ^\prime Z$ which respects the filtrations
	\cite[Cor.~3.5]{May96} and hence induces a map from the spectral sequence in
	\eqref{eq:BKSS} to the spectral sequence of \eqref{eq:AHSS-EF}. Applying the
	same argument to an inverse equivalence $t\colon F_\infty^\prime
	Z\rightarrow F_\infty Z$ and to a homotopy $ts\simeq \mathrm{Id}$, we obtain a homotopy equivalence of $E_1$-complexes and hence an isomorphism at $E_2$.
\end{proof}

We now turn our attention to $E\sF=\hocolim_{\sOGF} i$ for $i: \sOGF \subset
\Top_G$ the inclusion. We will model this homotopy colimit as the geometric realization of the standard two-sided bar construction (see \Cref{sec:esf} for further details):
\begin{equation}\label{eq:bar-construction}
	E\sF\simeq |B_\bul(*,\sOGF, i)|. 
\end{equation}
\begin{defn}\label{def:sf-homotopy-limit-ss}
	Let $M$ be a $G$-spectrum. The $\sF$\emph{-homotopy limit spectral sequence
	associated to $M$} is the homotopy spectral spectral sequence associated to
	the tower \[ \{F(\sk_n E\sF_+, M)\}_{n\geq 0},\] where $E\sF$ is equipped with the simplicial structure of \eqref{eq:bar-construction}.
\end{defn}


\begin{prop}\label{prop:three-ss-are-equal} 
	Let $N$ be a $G$-spectrum. Then there is an isomorphism, from $E_2$ on, between:
	\begin{enumerate}
		\item The $\sF$-homotopy limit spectral sequence:
		\[
		 	\pi^s\pi_t^G(F(B_\bul(*,\sOGF,i)_+,N))\Longrightarrow \pi_{t-s}^G
			F(E\sF_+, N)\cong \pi_{t-s}^G \holim_{\sOGF^\op} F(G/H_+, N)
	 	\]
	 	from \Cref{def:sf-homotopy-limit-ss},
		\item  the Bousfield-Kan spectral sequence 
		\[
		 	\pi^s\pi_t^G(F(X^{\bul+1}_+,N))\Longrightarrow \pi_{t-s}^G F(E\sF_+, N)\cong \pi_{t-s}^G \holim_{\sOGF^\op} F(G/H_+, N) \\
	 	\]
		associated to a simplicial presentation of $E\sF$ from \Cref{prop:retraction}, and 

	\item the equivariant Atiyah-Hirzebruch spectral sequence 
	\[H^s_G(E\sF;\pi_t^{(-)} N)\Longrightarrow \pi_{t-s}^G F(E\sF_+, N)\cong \pi_{t-s}^G\holim_{\sOGF^\op}F(G/H_+,N).\]
	\end{enumerate}

	Moreover, when $N=F(Y,M)$, for two $G$-spectra $Y$ and $M$ such that $M$ is $\sF$-nilpotent, the above spectral sequences converge to $M^*_G(Y)$.\footnote{As a consequence of \Cref{thm:fnil-three-prime}, \Cref{enum:three} below, they actually converge strongly to their abutment.}
\end{prop}

\begin{proof} 
	When $G$ is discrete, both $X^{\bul+1}$ and $B_\bul(*,\sOGF, i)$ are degree-wise discrete. So it follows from  \Cref{prop:AHSS-equals-BKSS} that
	all three spectral sequences are forms of the Atiyah-Hirzebruch spectral sequence for $E\sF$ and hence isomorphic.
	The final claim follows from \Cref{prop:relevance} and the 
	isomorphism $\pi_{t-s}^G F(Y_+,M)\cong M^{s-t}_G(Y).$
\end{proof}


To proceed, we will need to recall some results on towers of $G$-spectra from
\cite[\S 3]{Mat15}. We denote by $\Tow(\SpG)=\Fun((\bZ_{\geq 0})^\op,\SpG)$ the
$\infty$-category of towers in $\SpG$. Inside this $\infty$-category is
$\Townil(\SpG)\subset\Tow(\SpG)$, the full subcategory of nilpotent towers,
i.e., those towers $\{X_n\}_{n\ge 0}$ such that for some $N\ge 0$ and all $k\ge
0$, the map $X_{N+k}\to X_k$ is zero. 
We denote by $\Towfast(\SpG)\subset\Tow(\SpG)$ the full subcategory of
\emph{quickly converging} towers, i.e., those towers $\{ X_n\}_{n\ge 0}$ such that the cofiber of the canonical map of towers $\{\holim X_n\}\to\{X_n\}_{n\ge 0}$ is contained in $\Townil(\SpG)$. 
It follows from the definitions that $\Towfast(\SpG)\subset\Tow(\SpG)$ is a thick subcategory, and that exact endofunctors of $\SpG$ preserve $\Towfast(\SpG)$. 


We can now formulate the main result of this subsection, which in particular establishes the equivalences between Conditions \eqref{it:fnil-def} and \eqref{it:fnil-ss} from \Cref{thm:fnil-three}.

\begin{thm}\label{thm:fnil-three-prime}
	The following three conditions on a $G$-spectrum $M$ are equivalent:
	\begin{enumerate}
		\item\label{enum:one} The $G$-spectrum $M$ is $\sF$-nilpotent.
		\item\label{enum:two} The restriction map 
		$\Res_\sF^G: M\longrightarrow
		\holim_{\sOGF^\op} F(G/H_+,M)\simeq F(E\sF_+,M)$
		is an equivalence and the associated tower		$\{ F(\sk_n E\sF_+, M)\}_{n\ge 0}$ converges quickly.
		\item\label{enum:three} The map
		$M \to F(E \sF_+, M)$
 is an equivalence and there are integers $m$ and $n\geq 2$ such that for every $G$-spectrum $Y$, the $\sF$-homotopy limit spectral sequence:
		\[ E_2^{s,t}=H^s(E\sF;\pi_{t}^{(-)} F(Y,M)) \Longrightarrow M_G^{s-t}(Y)\]
		has a horizontal vanishing line of height $m$ on the $E_n$-page. In other words, $E_k^{s,*}=0$ for all $s>m$ and $k\geq n$.  
	\end{enumerate}
\end{thm}

\begin{proof} 
	The equivalence \eqref{enum:two}$\iff$\eqref{enum:three} is \cite[Prop.~3.12]{Mat15} combined with the identification of the $\sF$-spectral sequence from \Cref{prop:three-ss-are-equal}.

	We will now show \eqref{enum:one}$\iff$\eqref{enum:two}. 
	Let $A_{\sF} = \prod_{H \in \sF} F(G/H_+, S)$, so that a $G$-spectrum is
	$\sF$-nilpotent if and only if it is $A_{\sF}$-nilpotent. 
	Write $E \sF = |X^{\bullet + 1}|$ for $X = \bigsqcup_{H \in \sF} G/H$.
Then the tower $\left\{F(\mathrm{sk}_n E \sF_+, M\right\})$ is the  
$\mathrm{Tot}$ tower of the $A_{\sF}$-cobar complex of $M$. This is a quickly
converging tower with homotopy limit $M$ if and only if the $A_{\sF}$-Adams tower
\cite[Construction~\ref{S-adamstowerdef}]{MNNa} is nilpotent
(note that the $A_{\sF}$-Adams tower is the cofiber of the map of towers
$\left\{M\right\} \to \left\{F(\mathrm{sk}_n E\sF_+, M)\right\}$ by
\cite[Prop.~\ref{S-adamscobar}]{MNNa}).  Furthermore, that holds if and only if
and $M$ is $A_{\sF}$-nilpotent \cite[Prop.~\ref{S-properties:nilpotent}]{MNNa}.
\end{proof}

Recall also that we can quantify nilpotence, leading  to the  notion of the {\em$\sF$-exponent} of an $\sF$-nilpotent
$G$-spectrum $M$, denoted $\exp_{\sF}(M)$ \cite[Def.~\ref{S-Fnildef}]{MNNa}.  Recall again that, associated to
$G$ and $\sF$, there is the commutative algebra $A_{\sF}:=\prod_{H\in\sF}
F(G/H_+,S)$ in $\SpG$. The fiber $I$ of the canonical map $S\to A_{\sF}$ is
a non-unital algebra, and the $\sF$-exponent of $M \in \sFNil$ is the minimum
number $n\ge 0$ such that $(I^{\wedge n}\to S)\wedge M$ is zero. For $Y\in\SpG$,
we will denote by $E^{*,*}_*(Y)$ the $\sF$-homotopy limit spectral sequence
converging to $M^*_G(Y)$. We can then formulate the following alternate descriptions of the $\sF$-exponent.

\begin{prop}\label{prop:exponent}
For a nontrivial $\sF$-nilpotent spectrum $M$, the following integers are equal:
	\begin{itemize}
		\item the $\sF$-exponent $\exp_{\sF}(M)$,
		\item the minimal $n$ such that the canonical map $M\simeq
		F(E\sF_+,M)\longrightarrow F(\sk_{n-1} E\sF_+,M)$ in $\SpG$ admits a retraction,
		\item the minimal $n^\prime$ such that $M$ is a retract of an $F(Z_+,M)$ for an $(n^\prime-1)$-dimensional $G$-CW complex $Z$ with isotropy in $\sF$, 
		\item and the minimum $s\geq 0$ such that for all $Y\in\SpG$ and $k\geq
		s$, $E^{k,*}_{s+1}(Y)=E^{k,*}_\infty(Y)=0$. 
	\end{itemize}
\end{prop}


\begin{proof}
This follows easily from results in \cite{MNNa}. Fix the
$G$-space $X:=\coprod_{H\in\sF} G/H$ and the associated simplicial $G$-space
$X^{\bullet+1}$ which realizes to $E\sF$. One sees that the identification
$A_{\sF}\simeq F(X_+,S)$ generalizes to an identification of cosimplicial
commutative algebras in $\SpG$, namely the \emph{cobar construction}
$\mathrm{CB}^{\bullet}(A_{\sF})$ (cf.\ \cite[Sec.~2.1]{MNNa}) is equivalent to $F(X^{\bullet+1}_+,S)$. 
In view of this, the equality of the first two integers follows from
\cite[Prop.~\ref{S-prop:basiconexp}]{MNNa}.
To compare $n^\prime$ and $n$ we first note that by setting $Z=\sk_{n-1} E\sF$ we see that $n^\prime\leq n$. The other inequality follows 
because $F(Z_+, M)$, for an $(n'-1)$-dimensional $G$-CW complex $Z$ with isotropy
in $\sF$ and for any $G$-spectrum $M$, has $\sF$-exponent $\leq n'$.

Finally, we show $n = s$. 
Using
$\mathrm{CB}^{\bullet}(A_{\sF})\simeq F(X^{\bullet+1}_+,S)$ again, one sees
that our $\sF$-homotopy limit spectral sequence can be identified with the
$A_{\sF}$-based Adams spectral sequence as in \cite{Gre92}, and it is
well-known that the Adams filtration of a map $f\colon \Sigma^{-*} Y\to M$ in $M^*_G(Y)$ is exactly the maximum $q$ such that $f$ factors through $I^{\wedge q}\wedge M\to M$. 
It follows that $(n-1)$ is (precisely) the maximum $A_{\sF}$-Adams filtration of any map
into $M$, which implies that $E_{\infty}^{\ast, k}(Y) = 0$ for $k \geq n$ and
for any $G$-spectrum $Y$; moreover, $n$ is minimal with respect to this
property.

It remains to show that 
the $\sF$-spectral sequence degenerates at $E_{n+1}$, or equivalently that
$d_i = 0$ for $i \geq n+1$. This is a very general assertion about these types
of generalized Adams spectral sequences. For simplicity of notation, we assume that $Y
= S^0$. 
The $E_1^{p,q}$-page of the spectral sequence gives the homotopy groups 
$\pi_p( \mathrm{fib}\left( \mathrm{Tot}_q \to \mathrm{Tot}_{q-1}\right)) $ for the
cosimplicial object $ M \otimes \mathrm{CB}^\bullet(A_{\sF})$. 
By \cite[Prop.~\ref{S-adamscobar}]{MNNa}, we have
\[  \mathrm{fib}\left( \mathrm{Tot}_q \to \mathrm{Tot}_{q-1}\right) = 
\mathrm{cofib}( I^{\wedge q+1} \to I^{\wedge q} ) \wedge M  = I^{\wedge
q}/I^{\wedge q+1} \wedge M.
\]

If a class survives to $E_{n+1}$, then it can be lifted to $$\mathrm{fib}\left(
\mathrm{Tot}_{q+n} \to \mathrm{Tot}_{q-1}\right) = 
I^{\wedge q}/I^{\wedge q + n + 1} \wedge M,$$
by \cite[Prop.~\ref{S-adamscobar}]{MNNa} again. 
Consider now the diagram
\[ \xymatrix{
&  I^{\wedge q}/I^{\wedge q + n + 1} \wedge M \ar[d]^{\psi} \ar@{-->}[ld]
 \ar[r] &   \Sigma I^{\wedge q + n +
1} \wedge M \ar[d]^{\phi}
 \\
I^{\wedge q} \wedge M\ar[r] &  I^{\wedge q }/I^{\wedge q+1} \wedge M
\ar[r]^{\partial} & 
\Sigma I^{\wedge q+ 1} \wedge M.
}.\]
We claim that, under the hypotheses, there exists a dotted arrow making the diagram commute.
Therefore, our class can be in fact lifted to $\mathrm{fib}( \mathrm{Tot} \to
\mathrm{Tot}_{q-1})$ and so is a permanent cycle in the $\sF$-spectral sequence. 
To see this, we need to argue that the composite $\partial \circ \psi$ is
null-homotopic.
However, this follows from the fact that the diagram commutes and that 
$\phi$ is null-homotopic by hypothesis on $M$. 
\end{proof}

The proof of \Cref{thm:fnil-three} is now complete except for the identification of the $E_2$-term of the homotopy limit spectral sequence, and this will be completed in \Cref{sec:bredon}.

\begin{remark}\label{rem:dual-ss}
	One can dualize \cite[\S 3]{Mat15} since the notion of a stable $\infty$-category is self-dual. We thus obtain inside $\Fun(\mathbb{Z}_{\ge 0},\SpG)$ the nilpotent and quickly converging directed systems. The latter subcategory is thick and stable under exact endofunctors of $\SpG$. The exact couples associated to such directed systems once again define homological-type spectral sequences with horizontal vanishing lines. For example, when $M$ is $\sF$-nilpotent, $\{ \sk_n E\sF_+\wedge M\}_{n\ge 0}$ is a quickly converging directed system. It follows that for arbitrary $X\in\SpG$, the $\sF$-homotopy colimit spectral sequence
	\[ E^2_{s,t}=H_s^G(E\sF;\pi_t^{(-)} F(X,M))\cong \sideset{}{_s}\colim_{\sOGF} M_t^H(X)\Longrightarrow M_{t+s}^G(X)\]
	has a horizontal vanishing line at a finite page.

	Coupling this with the analogous result for the homotopy limit spectral sequence forces the generalized $\sF$-Tate spectral sequence of \cite[\S 22]{GrM95} to collapse to zero at some finite stage. Indeed, the positive degree terms of this spectral sequence are a quotient of the positive degree terms in the $\sF$-homotopy limit spectral sequence while the terms in degrees less than $-1$ are a subset of the positive degree terms in the $\sF$-homotopy colimit spectral sequence (cf.~\eqref{eq:tate}). Our vanishing results now imply the collapse of the $\sF$-Tate spectral sequence at a finite stage. By \Cref{prop:Fnil-implies-acyclic} this spectral sequence converges to 0.
\end{remark}
\section{Analysis of the spectral sequences}\label{sec:ss}


Let $G$ be a finite group and $\sF$ a family of subgroups. Let $X=\coprod_{H\in\sF}G/H$ be as in 
\Cref{prop:retraction}. 
 As observed in the previous section, the $\sF$-homotopy limit spectral
 sequence can be viewed as the Bousfield-Kan spectral sequence
 \cite[Ch.~X]{BoK72} associated to the cosimplicial $G$-spectrum
 $F(X^{\bul+1}_+, M)$ or as an equivariant Atiyah-Hirzebruch spectral sequence
 with $E_2$-term \[H^*_G(|X^{\bul+1}|_+;\Gpi M)\cong H^*_G(E\sF_+;\Gpi M).\] In \Cref{sec:bredon} we recall that this $E_2$-term can be identified with the derived functors $\lim^*_{\sOGF^\op}\Gpi M$. 

There is also an $\sF$-homotopy colimit spectral sequence and the chain complexes calculating the $E_2$-terms of the $\sF$-homotopy colimit and limit spectral sequences can be glued together to form the associated Amitsur-Dress-Tate cohomology groups $\widehat{H}_\sF^*(\Gpi M)$. In \Cref{sec:tate} we will review this construction and recall a few vanishing results. These results play a critical role in the proofs of \Cref{thm:gen-artin,thm:gen-fiso,cor:varieties} in \Cref{sec:calc-thms}.  They will also be used in the proof of the generalized hyperelementary induction theorem \Cref{prop:hyper-induction}, in \Cref{sec:split}. We conclude this section with a form of Quillen's stratification theorem (\Cref{prop:stratification}).

\subsection{Bredon (co)homology and derived functors}\label{sec:bredon}

In this subsection we review some classical results about coefficient systems,
and relate the $\sF$-homotopy limit spectral sequence to Bredon cohomology. 
Let $\sC$ be a small category and $\bZ\sC$ the category of contravariant
functors from $\sC$ to abelian groups; $\bZ \sC$ is an abelian category
with kernels and cokernels calculated object-wise, which admits enough
projectives and injectives. 



Now let $\underline{\bZ}$ denote the constant functor $c\mapsto \bZ$.
Then we have  
\begin{equation} \bZ\sC(\underline{\bZ}, M)\cong \lim_{\sC^{\op}} M,
\label{eq:derived-lim} \quad 
	\sideset{}{^*}\lim_{\sC^{\op}}(M) \cong \Ext^{*}_{\bZ\sC}(\underline{\bZ}, M),
\end{equation}
i.e., we recover the derived functors of the inverse limit. 

We now specialize to the primary case of interest for us. 
\begin{defn}[{Cf. \cite[Sec. I.4]{Bre67}}]\label{def:coefficient-system}
	The category of \emph{coefficient systems} (on a finite group $G$) is the category $\bZ\sOG$ of contravariant functors from $\sOG$ to abelian groups.
\end{defn}

\begin{examples}\label{exs:coefficient-systems}
\begin{enumerate}
\item Associated to any $G$-set $X$ we obtain a coefficient system $\bZ[X]$ defined by \[ \bZ[X]\colon G/H\mapsto \bZ\{\ho\Top_G(G/H, X)\}\cong \bZ[X^H].\] 
When $X = G/H$, $\bZ[X]$ is the projective functor $\bZ\{\sOG(-,G/H)\}$ considered above. 

\item Let $X$ be a $G$-CW complex and for each $n\geq 0$ let $X_n$ be the
$G$-set of $n$-cells in $X$. The attaching maps define a chain complex of
coefficient systems $C_*(X):=\bZ[X_*]$. 

\item Let $\underline{\bZ[\sF]}$ denote the coefficient system 
\[ \underline{\bZ[\sF]}\colon G/K\mapsto H_*(E\sF^K;\bZ)=H_0(E\sF ^K;\bZ).\]
By \eqref{eq:EF-univ-prop}, we see that $\underline{\bZ[\sF]}(G/K)=\bZ$, when $K\in\sF$, and is zero otherwise.

\item A $G$-spectrum $M$ defines a graded coefficient system $\Gpi M$ by \[\Gpi
M\colon G/H\mapsto \pi_*^G F(G/H_+, M)\cong\pi_*^H M.\] 
\end{enumerate}
\end{examples}

We now quote the following classical relationship between the Bredon cohomology
of $E \sF$ and the higher limits of $C$ over $\sOGF^{op}$. See also \cite[Prop.
2.10]{Gro02} for a treatment and many applications. 

\begin{prop}[{Cf. \cite[Prop. 4.2]{Wa84} or \cite[Prop. V.4.8]{May96}}] 
Let $C\in \mathbb{Z}\mathcal{O}(G)$ be a coefficient system. 
Then there is an identification between the Bredon cohomology 
$H^s_G( E\sF; C)$ and the derived functors $\lim^s_{\sOGF^{op}} C$.
\end{prop} 
\begin{cor}\label{prop:e2-identification}
Fix a $G$-spectrum $M$.
Let $E_2^{s,t}$ denote the $E_2$-term of the $\sF$-homotopy limit spectral sequence.  Then there is a chain of isomorphisms: 
\begin{align*}
	E_2^{s,t} & \cong H^s_G(E\sF;\pi_t^{(-)}M)\\
	 & \cong \Ext^{s,t}_{\ZOG}(\underline{\bZ[\sF]}, \Gpi M)\\
	 & \cong \sideset{}{^s}\lim_{\sOGF^\op} \pi_t^H M\\
	 & \cong \Ext^{s,t}_{\ZOGF}(\underline{\bZ}, \Gpi M).
 \end{align*}
In particular, the 0-line is $\lim_{\sOGF^\op} \pi_*^H M$.
\end{cor}
\begin{proof}

	The identification of the $E_2$-term as the derived functors of the limit is
	due to Bousfield and Kan \cite[Ch.~XI]{BoK72} and the remaining isomorphisms
	are consequences of the above discussion.
\end{proof}

The above results and identifications dualize, cf. \cite[Sec. V.4]{May96}.  A $G$-spectrum $M$ defines a \emph{covariant} functor $\Gpi M$ from $\sOG$ to (graded) abelian groups by 
\[(\Gpi M)(G/H)=\pi_*^G (G/H_+\wedge M)\cong \pi_*^H M.\] 
Now the skeletal filtration on $E\sF$ defines a homological Atiyah-Hirzebruch
spectral sequence with the following $E^2$-identifications \cite[Prop.
V.4.8]{May96}:
\begin{align*}
	E^2_{s,t}&\cong  H_{s}^G(E\sF;\Gpit M) \cong \Tor_{s,t}^{\ZOG}(\underline{\bZ[\sF]},\Gpi M) \cong \Tor_{s,t}^{\ZOGF}(\underline{\bZ},\Gpi M)\\
\\ & \cong \sideset{}{_s}\colim_{\sOGF} \pi_t^H M \Longrightarrow \pi_{t+s}^G \hocolim_{\sOGF} G/H_+\wedge M.
\end{align*}
Here, for a $G$-space $X$, the Bredon homology $H_*^G(X; \Gpit M)$ 
(cf.\ \cite[\S 1.4]{May96})
is defined
to be the homology of the chain complex \[ C_*^G(X;\Gpit
M):=C_*(X)\otimes_{\ZOG} \Gpit M\]  formed from the tensor product of graded functors.

\subsection{Amitsur-Dress-Tate cohomology}\label{sec:tate}

Let $C \in \mathbb{Z}\mathcal{O}(G)$ and  consider the Bredon
cohomology $H^s_G( E \sF; C) = \sideset{}{^s}
\lim_{\sOGF^{op}}C$ as in the previous subsection. 
In this subsection, we recall the following (\Cref{prop:torsion}): 
when $C$ comes from a \emph{Mackey functor} on $G$
(e.g., as the homotopy groups of a $G$-spectrum),
these groups are forced to be $|G|$-torsion for $s> 0$. This will be
fundamental for our computational applications of $\sF$-nilpotence. 
The property follows essentially from a transfer argument (a generalization of
the fact that for a finite group $G$, the group cohomology $H^s(G; \mathbb{Z})$
is $|G|$-torsion for $s > 0$) and appears, for instance, as \cite[Cor.
5.16]{JM92}. 

In this subsection, we
will review some of the theory of Amitsur-Dress-Tate cohomology \cite[\S
21]{GrM95}, which we will
use to prove these results.
The rest of this paper depends on the present section only through the $|G|$-torsion result from \cite{JM92}.

For notational simplicity, we will always assume that our Mackey functor is
given to us as the homotopy groups of a $G$-spectrum $M$. 

\begin{cons}
We can splice together the $E_1$-pages of the homological and cohomological spectral sequences from the previous section to define Amitsur-Dress-Tate cohomology. 

For this purpose let $C_*(E\sF;\Gpi M )$ and $C^*(E\sF;\Gpi M)$ denote the Bredon cellular chains and cochain complexes on $E\sF$ with coefficients in $\Gpi M$. 
These complexes have degree zero (co)homology given by $\colim_{\sOGF} \pi_*^H M$ and 
$\lim_{\sOGF^{op}} \pi_*^H M$, respectively, and we obtain a natural norm
map (cf.\ \eqref{eq:ind-res})
\begin{equation} \label{Findagain}\mathrm{N} \colon \colim_{\sOGF} \pi_*^H M \to  \lim_{\sOGF^{op}} \pi_*^H
M.\end{equation}
As a result, we obtain a map of complexes
\begin{equation} \label{Fnormcx}
C_*(E\sF;\Gpi M )\to C^*(E\sF;\Gpi M)
\end{equation}
 determined by the condition that it induce \eqref{Findagain} in $\pi_0$. 
We define the \emph{Amitsur-Dress-Tate complex} 
$\widehat{C}^*(E\sF;\Gpi M))$ to be the cofiber of the above map. 
\end{cons}
\begin{defn}[{\cite[Def.~21.1]{GrM95}}]\label{def:adt-cohomology}
	The \emph{Amitsur-Dress-Tate cohomology} groups of $\sF$ with coefficients in $\Gpi M$ are defined by \[\widehat{H}_\sF^{*}(\Gpi M) := H^*(\widehat{C}^*(E\sF;\Gpi M)).\] 
\end{defn} 


We immediately obtain the following identification of the Amitsur-Dress-Tate
cohomology  in terms of \eqref{Findagain}:
\begin{equation}\label{eq:tate}
	\widehat{H}^{s}_{\sF}(\Gpi M)\cong
	\begin{cases} 
	H^{s,*}_G(E\sF; \Gpi M) & \mbox{if }s>0\\
	H_{-s-1,*}^G(E\sF; \Gpi M) & \mbox{if }s<-1\\
	\coker \mathrm{N} & \mbox{if }s=0\\
	\ker \mathrm{N} & \mbox{if }s=-1
	\end{cases}
\end{equation}

We will now record some basic properties of Amitsur-Dress-Tate cohomology.
\begin{prop}\label{prop:torsion}
	Suppose that $R$ is a $G$-ring spectrum and $M$ is an $R$-module, then:
	\begin{enumerate}
		\item The Amitsur-Dress-Tate cohomology groups $\widehat{H}^{*}_{\sF}(\Gpi R)$ have an induced graded $\pi_*^G R$-algebra structure and $\widehat{H}^{*}_{\sF}(\Gpi M)$ is a graded module over $\widehat{H}^{*}_{\sF}(\Gpi R)$ such that the isomorphisms in \eqref{eq:tate} respect this structure. \label{it:tate-multiplicative}
		\item If $x=\Ind_H^G y \in \pi_*^G R$ for some $H\in \sF$ and $y \in
		\pi_*^H R$, then $x\cdot \widehat{H}^{*}_{\sF}(\Gpi R)=0$. \label{it:ind-dies}
		\item\label{item:index}  The commutative ring
		$\widehat{H}^0_{\sF}(\pi_0^{(-)} S)$ is annihilated by $|G|$. 
	We let $n(\sF)$ be the minimal positive integer which vanishes in
	$\widehat{H}^0_{\sF}(\pi_0^{(-)} S)$, so that $n(\sF) \mid |G|$. 	
		\item The number $n(\sF)$ from \eqref{item:index} is the minimal positive
		integer $n$ such that $n\cdot \widehat{H}^{*}_{\sF}(\Gpi M)=0$, for
		\emph{every} $R$ and $M$.
	\end{enumerate}

	In particular, if $i>0$ then $H_i(E\sF;\Gpi M)$ and $H^i(E\sF;\Gpi M)$ are $n(\sF)$-torsion. 
\end{prop}

\begin{definition} 
For a finite group $G$ and a family $\sF$ of subgroups of $G$, the integer $n(\sF)$ in \Cref{prop:torsion}, \eqref{item:index} is called {\em the index of 
the family} $\sF$ (of subgroups of $G$).
\end{definition}

\begin{proof}[Proof of \Cref{prop:torsion}]

	The first claim is a graded form of \cite[Prop.~2.3]{Dre73b}. It follows that $\widehat{H}^{*}_{\sF}(\Gpi R)$ is a module over 
	\begin{equation}\label{eq:tatering}
	\widehat{H}^{0}_\sF(\Gpi R)\cong \lim_{\sOGF^\op} \pi_*^{(-)} R/\Im\left(
	\mathrm{Ind}^G_{\sF}\right)=\lim_{\sOGF^\op} \pi_*^{(-)} R/(\sum_{H\in\sF} \Im \Ind_H^G (\pi_*^H R)).
	\end{equation}
	
	  This immediately implies the second claim. 
The fourth claim is clear because every $\widehat{H}^*_{\sF}(\Gpi M)$ is a module over $\widehat{H}^0_{\sF}(\pi_0^{(-)} S)$,
and the third claim will be addressed in the lemma below.

\end{proof}


Recall that we have $\pi_0^G S\simeq A(G)$, the Burnside ring of $G$. Jointly
with \eqref{eq:tatering} applied to $R=S$ and $*=0$, this yields a description of the commutative ring $\widehat{H}^0_{\sF}(\pi_0^{(-)} S)$ in terms of the Burnside rings $A(H)$ for certain subgroups $H\leq G$, and shows that claim \eqref{item:index} of \Cref{prop:torsion} is equivalent to the following result.

\begin{lemma}\label{lem:exists-index}
	There is a minimal positive integer $n(\sF)$ such that there exists $x\in \Im \Ind_\sF^G\subseteq A(G)$ and 
			\[
				y\in \ker\left(A(G)\xrightarrow{\Res_\sF^G} \lim_{\sOGF^\op}A(-) \right)
			\]
			such that $n(\sF)=x+y$.
	Furthermore, the integer $n(\sF)$ divides the group order $|G|$.
\end{lemma}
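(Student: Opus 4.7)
The strategy is to exhibit a decomposition $|G|=x+y$ with $x\in\Im\Ind_\sF^G$ and $y\in\ker\Res_\sF^G$; once this is known, the set $J:=(\Im\Ind_\sF^G+\ker\Res_\sF^G)\cap\bZ$ is a non-zero subgroup of $\bZ$, hence generated by a unique positive integer $n(\sF)$ which necessarily divides $|G|$. Thus both assertions of the lemma reduce to the single statement $|G|\in\Im\Ind_\sF^G+\ker\Res_\sF^G$.

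My first step is to reinterpret the two subgroups via the ghost map $\chi\colon A(G)\hookrightarrow\prod_{(K)}\bZ$, $a\mapsto(\chi_K(a))_{(K)}$, where $\chi_K(a)$ counts $K$-fixed points of the virtual $G$-set $a$. Since $\chi$ is injective and $\sF$ is closed under subconjugation, one reads off directly that $\ker\Res_\sF^G=\{a\in A(G):\chi_K(a)=0\text{ for all }K\in\sF\}$. Dually, $\Im\Ind_\sF^G$ is the $\bZ$-span of $\{[G/L]\}_{L\in\sF/\sim}$, and a descending induction on the subconjugation order — using that $\chi_K([G/L])=0$ unless $K\leq_GL$ with diagonal value $|W_GL|\neq 0$, and that $\sF$ is subgroup-closed (so no $L\notin\sF$ admits a strict superconjugate in $\sF$) — identifies this with $\{a\in A(G):\chi_K(a)=0\text{ for all }K\notin\sF\}$.

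The second step invokes the primitive idempotents of $A(G)\otimes\bQ$: for each conjugacy class of subgroups $(H)$ there is a unique $e_H\in A(G)\otimes\bQ$ with $\chi_K(e_H)=\delta_{(K),(H)}$, and $1=\sum_{(H)}e_H$. Gluck's formula
\[ e_H=\frac{1}{|N_GH|}\sum_{K\le H}|K|\,\mu(K,H)\,[G/K] \]
(with $\mu$ the M\"obius function of the subgroup lattice of $H$) gives $|N_GH|\,e_H\in A(G)$, and since $|N_GH|$ divides $|G|$ we obtain $|G|\,e_H\in A(G)$. Setting
\[ x:=\sum_{(H):\,H\in\sF}|G|\,e_H,\qquad y:=\sum_{(H):\,H\notin\sF}|G|\,e_H, \]
both elements lie in $A(G)$, we have $x+y=|G|$, and by the character criteria of the previous paragraph $x\in\Im\Ind_\sF^G$ and $y\in\ker\Res_\sF^G$. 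The main external input is the integrality $|N_GH|\,e_H\in A(G)$; this follows from Gluck's classical theorem, or from a direct M\"obius-inversion computation on the subgroup lattice of $H$. The remaining steps are clean bookkeeping with the ghost map.
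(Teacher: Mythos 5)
Your proof is correct and follows essentially the same route as the paper's: the published text simply cites \cite[Prop.~21.3 and Cor.~21.4]{GrM95}, but the underlying argument there (and in the authors' own suppressed draft of this proof) is exactly your ghost-ring computation --- embed $A(G)$ into $\prod_{(K)}\bZ$, identify $\ker\Res_\sF^G$ and $\Im\Ind_\sF^G$ as the elements supported off $\sF$ and on $\sF$ respectively, and use the integrality $|G|\,e_H\in A(G)$ to split $|G|=x+y$. Your packaging of the minimality and divisibility claims via the subgroup $(\Im\Ind_\sF^G+\ker\Res_\sF^G)\cap\bZ$ is a clean touch, but it is not a different method.
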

\begin{proof} 
See \cite[Prop.~21.3 and Cor.~21.4]{GrM95}. 
\end{proof} 

\begin{remark}

The existence proof of $n(\sF)$ is constructive. In fact, computing $n(\sF)$ is a linear algebra problem involving the table of marks of $G$ which can be calculated by a computer algebra package such as GAP.
\end{remark}

\begin{examples}\label{exs:indices}
	\begin{enumerate}
		\item When $G=A_5$ we have calculated the indices of various families in \Cref{fig:indices} using the table of marks in \Cref{fig:table-of-marks}.
		\item A prime $p$ divides $n(\sP)$ if and only if there is a nontrivial homomorphism $G\rightarrow C_p$ or, equivalently, $H^1(BG;\bF_p)\neq 0$ \cite[Ex.~21.5.(iii)]{GrM95}. In particular, if $G$ is perfect, then we have $n(\sP)=1$. \label{it:exs-indices-1}

\begin{figure}
\centering

\begin{subfigure}{.25\textwidth}
\centering
\scalebox{0.65}{
\setlength{\tabcolsep}{4pt}
\begin{tabu}{ c  c }
\hline
Family $\sF$ & $n(\sF)$\\
\hline
$\sTriv$ & 60 \\ 
$\sC_{(2)}$ & 30\\
$\sAb_{(2)}$ & 15\\
$\sC_{(3)}=\sAb_{(3)}$ & 20\\
$\sC_{(5)}=\sAb_{(5)}$ & 12\\
$\sC$ & 2\\
$\bb{\sAll}=\sAb=\sAb^2=\sE$ & 6 \\
$\sP$ & 1 \\ 
$\sAll$ & 1 \\
\hline
\end{tabu}
}
	\subcaption{\label{fig:indices} Indices of families of subgroups of $A_5$.}
\end{subfigure}%
\begin{subfigure}{.75\textwidth}
\centering
\scalebox{0.65}{
\setlength{\tabcolsep}{3pt}
\begin{tabu}{ c   c  c  c  c  c  c  c  c  c  }
\hline
 & $[A_5/e]$ & $[A_5/C_2]$ & $[A_5/C_3]$ & $[A_5/C_2\times C_2]$  & $[A_5/C_5]$ & $[A_5/\Sigma_3]$ & $[A_5/D_{10}]$ & $[A_5/A_4]$  & $[A_5/A_5]$ \\
\hline
$e$                       & 60 & 30 & 20 & 15 & 12 & 10 & 6 & 5 & 1 \\
$C_2$                  & 0   & 2   & 0   & 3   & 0   &  2  & 2 & 1 & 1 \\
$C_3$                  & 0   & 0   & 2   & 0   & 0   &  1  & 0 & 2 & 1 \\
$C_2\times C_2$ & 0   & 0   & 0   & 3   & 0   &  0  & 0 & 1 & 1 \\
$C_5$                  & 0   & 0   & 0   & 0   & 2   &  0  & 1 & 0 & 1 \\
$\Sigma_3$         & 0   & 0   & 0   & 0   & 0   &  1  & 0 & 0 & 1 \\
$D_{10}$             & 0   & 0   & 0   & 0   & 0   &  0  & 1 & 0 & 1 \\
$A_4$                 & 0   & 0   & 0   & 0   & 0   &  0  & 0 & 1 & 1 \\
$A_5$                 & 0   & 0   & 0   & 0   & 0   &  0  & 0 & 0 & 1 \\
\hline
\end{tabu}
}
	\subcaption{\label{fig:table-of-marks} Table of marks for $A_5$.}
\end{subfigure}
	\caption[]{}
\end{figure}
	\end{enumerate}
\end{examples}
\subsection{Artin induction and $\mathcal{N}$-isomorphism
theorems}\label{sec:calc-thms}

\Cref{prop:torsion} immediately implies the following more precise form of \Cref{thm:gen-artin}:
\begin{thm}\label{thm:gen-gen-artin}
	Let $M$ and $X$ be $G$-spectra and $\sF$ a family of subgroups such that $M$ is $\sF$-nilpotent. Then each of the following maps
	\begin{equation}\label{eq:cohomology}
	 \colim_{\sOGF} M^*_H(X)\xrightarrow{\Ind_\sF^G} M^*_G(X)\xrightarrow{\Res_\sF^G} \lim_{\sOGF^\op} M^*_H(X)
	\end{equation}
	\begin{equation}\label{eq:homology}
	 \colim_{\sOGF} M_*^H(X)\xrightarrow{\Ind_\sF^G} M_*^G(X)\xrightarrow{\Res_\sF^G} \lim_{\sOGF^\op} M_*^H(X)
	\end{equation}
	is an isomorphism after inverting $n(\sF)$, the index of the family $\sF$.
\end{thm}
\begin{proof}
	Since $\sFNil$ is closed under tensors and cotensors, it suffices to prove
	the theorem in the case $X=S^0$. Since \(\Gpi
	M=M_*^{(-)}(S^0)=M^{-*}_{(-)}(S^0),\) we see that \eqref{eq:cohomology} and \eqref{eq:homology} both reduce to statements about homotopy groups.  

	Set $n=n(\sF)$. Since $M$ is $\sF$-nilpotent, the $\sF$-homotopy limit spectral sequence converges strongly and has a horizontal vanishing line, say of height $m$ at the $N$th page. We will now analyze the following composition of maps
	\[ \ker \Res_\sF^G \hookrightarrow \pi_*^G M \twoheadrightarrow E_\infty^{0,*}\hookrightarrow E_2^{0,*}= \lim_{\sOGF^\op} \pi_*^H M\] where the composition of the latter two maps is $\Res_\sF^G$. Now $\ker \Res_\sF^G$ consists of those elements in $\pi_*^G M$ detected in positive filtration. The associated graded of this filtration on $\pi_*^G M$ is $\bigoplus_{s\geq 0}E_\infty^{s,*}$. These groups are $n$-torsion for $s>0$ by \Cref{prop:torsion} and $0$ for $s>m$. So if $x$ is detected in $E_2^{s,*}$ for $s>0$ then $nx$ is zero \emph{modulo higher filtration}. Since the groups in filtration degree greater than $m$ are zero we see that $n^{m}\cdot \ker \Res_\sF^G=0$. So $\Res_\sF^G$ is an injection after inverting $n$.

	Now suppose that $x\in E_2^{0,*}$ is not in the image of $\Res_\sF^G$. Since the spectral sequence converges strongly $x$ must support a differential. Suppose that $d_2x=y\neq 0$. Since $y$ is in positive filtration it is $n$-torsion and hence $d_2(nx)=ny=0$. So $nx$ survives to $E_3$. Inductively we see that $n^{k} x$ survives to the $E_{2+k}$ page. Using the horizontal vanishing line we see that there must be a fixed $k$ such that $n^k E_2^{0,*}\subset E_\infty^{0,*}=\Im \Res_\sF^G$. It follows that $\Res_\sF^G$ is an isomorphism after inverting $n$.

	The claim for $\Ind_\sF^G$ is easier and only requires that the map $\Ind_\sF^G\colon E\sF_+\wedge M\rightarrow M$ be an equivalence,
	i.e., that $M$ be $\sF$-torsion, rather than that $M$ be actually $\sF$-nilpotent. Since inverting $n$ commutes with homotopy colimits and ordinary colimits, if we tensor the $\sF$-homotopy colimit spectral sequence 
	\[
		\sideset{}{_s}\colim_{\sOGF} (\pi_*^HM)\Longrightarrow \pi_*^G M
	\]
	with $\bZ[n^{-1}]$ we obtain the homotopy colimit spectral sequence for $M[n^{-1}]$.
	This spectral sequence collapses at $E_2$ onto the zero line by \Cref{prop:torsion} and the claim for $\Ind_\sF^G$ follows.
\end{proof}

\begin{thm}\label{thm:gen-gen-fiso}
	Let $R$ be an $\sF$-nilpotent $G$-ring spectrum and let $X$ be a $G$-space.
	Suppose further that for each $H\in \sF$, the graded ring $R^*_H(X)$ is graded-commutative. Then the canonical map
	\[ 
	\Res_\sF^G : R^*_G(X)\longrightarrow \lim_{\sOGF^\op} R^*_H(X)
	\]
	is a uniform $\cN$-isomorphism, i.e., there are positive integers $K$ and $L$ such that if $x\in \ker \Res_\sF^G$ and $y\in \lim_{\sOGF^\op} R^*_H(X)$ then $x^K=0$ and $y^L\in \Im\ \Res_{\sF}^G$.
	Moreover, after localizing at a prime $p$, $\Res_{\sF}^G$ is a uniform $\cF_{p}$-isomorphism.
\end{thm}
\begin{proof}
	Suppose that $x\in \ker \Res_\sF^G$. It follows from the strong convergence of the $\sF$-homotopy limit spectral sequence 
	\begin{equation}\label{eq:f-iso-holim}
		 E_2^{s,-t}=\sideset{}{^s}\lim_{\sOGF^\op} R_H^t(X) \Longrightarrow R^{t+s}_G(X)
	\end{equation}
	that $x$ is detected in positive filtration. This spectral sequence has a horizontal vanishing line at the $E_\infty$-page. More precisely,  we know that for $K=\exp_{\sF}(R)$, $E_\infty^{s,*}=0$ when $s\geq K$. It follows that $x^K=0$. 

	Now suppose that $y\in \lim_{\sOGF^\op} R^*_H(X)$ is not in the image of $\Res_\sF^G$. Convergence of the $\sF$-homotopy limit spectral sequence implies that such an element must support a nontrivial differential, say $d_n(y)=z\neq 0$. Since $z$ is in positive filtration, it is $N=n(\sF)$-torsion by \Cref{prop:torsion}. Replacing $y$ with its square if necessary, we can assume that $y$ is in even degrees. Now since $R_H^*(X)$ is a graded-commutative ring functorially in $H\in \sF$, $\lim_{\sOGF^\op}R_H^*(X)$ is a graded-commutative ring. It now follows from the Leibniz rule that $d_n(y^{N})=Ny^{N-1}z=0$ and that $y^{N}$ survives to the $E_{n+1}$-page. We can now argue by induction and, since the spectral sequence collapses at the $E_{K+1}$-page, it follows that for every $y\in \lim_{\sOGF^\op} R_H^*(X)$, $y^{2N^{K-1}}$ survives the spectral sequence. Setting $L=2N^{K-1}$ we that $\Res_\sF^G$ is a uniform $\cN$-isomorphism as described above. 

	To see that $\bZ_{(p)}\otimes \Res_\sF^G$ is a uniform $\cF_p$-isomorphism, 
observe first that since the kernel of $\Res_{\sF}^G$ is nilpotent, so is the kernel of 
$\bZ_{(p)}\otimes \Res_\sF^G$. 
	Consider now the $p$-localization of the $\sF$-homotopy limit spectral
	sequence in \eqref{eq:f-iso-holim}. 
Since the spectral sequence collapses with a horizontal vanishing line at a
finite stage, we can pass $p$-localization through the spectral sequence 	
and obtain a spectral sequence converging to $F(X_+, R)_{(p)}$. 	
Since everything above the zero-line at $E_2$ is now $p$-power torsion, it
follows that for any element $x \in E_2^{0, t}$, we have that $x^{p^k}$ is a
permanent cycle for $k \gg 0$. This shows that 
$\bZ_{(p)}\otimes \Res_\sF^G$ has image containing all $p^k$-powers for $k \gg
0$.

\end{proof}
\begin{remark} \label{rem:lowerboundexp}
The horizontal vanishing line in fact implies \(\ker(\Res_\sF^G)^{\exp_{\sF}(R)}=0.\)
\end{remark}

We conclude this section with several applications of \Cref{thm:gen-gen-fiso}
including \Cref{prop:stratification}, a form of Quillen's stratification
theorem. First we will prove the following two elementary propositions, which were known to Quillen, which
demonstrate how \Cref{thm:gen-gen-fiso} implies \Cref{cor:varieties}. 

\begin{prop}\label{prop:spec-homeo}
	If $f\colon A\to B$ is an $\cN$-isomorphism of commutative rings, then $\Spec(f)$ is a homeomorphism. 

 \end{prop}
\begin{proof} 
	We factor $f$ as $A\to A/\ker(f)\to B$. The first map induces a homeomorphism
	on $\Spec$ by \cite[\S 1, ex.~21.iv)]{AtM69} and the second one a closed continuous surjection
	by \cite[\S 1, ex.~21, v) and \S 5, ex.~1]{AtM69}. It remains to see that
	\(\Spec(B)\to \Spec(A/\ker(f))\) is injective. 
	If $p_1,p_2\subseteq B$ are prime ideals with the same contraction to $A/\ker(f)$, and $x\in p_1$ is given, then for some $n\ge 0$ we have \[x^n\in p_1\cap (A/\ker(f))\subseteq p_2,\] hence $x\in p_2$ and $p_1\subseteq p_2$. By symmetry this gives $p_1=p_2$, as desired.

\end{proof}

\begin{prop}\label{prop:v-iso}
	Suppose that $f\colon A\rightarrow B$ is a map of commutative rings such that:
	\begin{enumerate}
		\item $f\otimes \bQ$ is an isomorphism and 
		\item for every prime $p$, $f\otimes \bZ_{(p)}$ is an $\cF_p$-isomorphism.
	\end{enumerate}
	Then the natural transformation of functors of rings
	\[f^*\colon \ring(B,-)\rightarrow  \ring(A, -)\] is an isomorphism
	on algebraically closed fields. In other words, $f$ is a $\cV$-isomorphism \cite[Defn.~A.3]{GS99}.

\end{prop}

\begin{proof}
The first condition implies that  $f^*$ is an isomorphism after restricting to fields of
	characteristic 0.  For algebraically closed fields of characteristic $p$, since $f\otimes \bZ_{(p)}$ is an $\cF_p$-isomorphism by assumption and any $\cF_p$-isomorphism between two $\bF_p$-algebras is a $\cV$-isomorphism by \cite[Prop.~B.8]{Qui71b}, we just need to verify that reducing a $\cF_p$-isomorphism mod $p$ induces a $\cF_p$-isomorphism. 

	In other words, we need to show that if
	$f^\prime=f\otimes \bZ_{(p)}$ is an $\cF_p$-isomorphism then so is
	$\overline{f} = f \otimes_{\mathbb{Z}} \mathbb{F}_p$.
	Suppose that $\overline{x}\in \ker(\overline{f})$, which we lift to $x\in
	A_{(p)}$. Now $f^\prime(x)=pz$ for some $z\in B_{(p)}$. Since $f^\prime$ is
	an $\cF_p$-isomorphism, there exists an $m\geq 0$ and $y\in A_{(p)}$ such
	that $z^{p^m}=f^\prime(y)$. Now set $w=(x^{p^m}-p^{p^m}y)$ so
	\[f^\prime(w)=p^{p^m}z^{p^m}-p^{p^m}z^{p^m}=0.\] Since $f^\prime$ is an
	$\cF_p$-isomorphism, $w$ is nilpotent; however, $w$ reduces to $\overline{x}^{p^m}$ so $\overline{x}$ is nilpotent. 

	Now consider some $\overline{z}^\prime \in B\otimes \bF_p$ and choose a lift
	$z^\prime \in B_{(p)}$. Since $f^\prime$ is an $\cF_p$-isomorphism there is
	a non-negative integer $m^\prime$ and $y^\prime \in A_{(p)}$ such that
	$f^\prime(y^\prime)=(z^\prime)^{p^{m^\prime}}$. Reducing $y^\prime$ mod $p$, we see that $\overline{f}$ is an $\cF_p$-isomorphism.

\end{proof}

We will now combine the above results with the work of Quillen to obtain the following stratification result:
\begin{thm}\label{prop:stratification}
	Suppose that $R$ is a homotopy commutative $\sF$-nilpotent ring spectrum.
	Suppose further that $\pi_0^G R$ is noetherian and for every $H\in \sF$, $\pi_0^H R$ is finite over $\pi_0^G R$ via $\Res_H^G$. Then the canonical natural transformations 
	of functors of rings
	\[ \colim_{\sOGF} \ring(\pi_0^H R, -) \rightarrow \ring(\lim_{\sOGF^\op} \pi_0^H R, -)\xleftarrow{{\Res_\sF^G}^*} \ring(\pi_0^G R, -)\]
	are isomorphisms when the input  is an algebraically closed field. 
	Similarly, the canonical maps between Zariski spaces
	\[ \colim_{\sOGF} \Spec(\pi_0^H R) \rightarrow \Spec(\lim_{\sOGF^\op} \pi_0^H R)\xleftarrow{{\Res_\sF^G}^*} \Spec(\pi_0^G R)\]
	are homeomorphisms.
\end{thm}
\begin{proof}
First, the map $\Res^G_{\sF}: \pi_0^G R \to \varprojlim_{\sOGF^{op}} \pi_0^H R$ 
becomes an isomorphism after
rationalizing (\Cref{thm:gen-gen-artin}) and an $\cF_p$-isomorphism after localizing at $p$
(\Cref{thm:gen-gen-fiso}); in addition, it is an $\cN$-isomorphism. 
It follows that the map of Zariski spectra
\( 
\spec( \varprojlim_{\sOGF^{op}} \pi_0^H R ) \to \spec( \pi_0^G R)
\)
is a homeomorphism (\Cref{prop:spec-homeo}). 
Furthermore, for an algebraically closed field $L$ we have an isomorphism
$\mathrm{Ring}( \pi_0^G R, L) \simeq
\mathrm{Ring}( \varprojlim_{\sOGF^{op}} \pi_0^H R, L)$
via \Cref{prop:v-iso}. 
This shows that both the natural transformations directed to the left are
isomorphisms.

	Next, the natural transformations directed to the right are isomorphisms by
	results of \cite{Qui71b}. 
	Here we use the finiteness of each $\pi_0^H R$ as a $\pi_0^G
	R$-module to guarantee that each map $\spec( \pi_0^H R)  \to \spec( \pi_0^G
	R)$ is a closed map. 
	Consider the category of finite $\pi_0^G R$-algebras. 
By \cite[Cor. B.7]{Qui71b}, the $\mathrm{Spec}$ functor (to topological spaces)
carries finite limits of 
finite $\pi_0^G R$-algebras to colimits of topological spaces. 
Therefore, $\varinjlim_{\sOGF} \spec( \pi_0^H R) \to 
\spec( \varprojlim_{\sOGF^{op}}  \pi_0^H R)$ is a homeomorphism, as desired. 
Finally, by \cite[Lem.~8.11]{Qui71b}, the map 
$\mathrm{colim}_{\sOGF} \mathrm{Ring}( \pi_0^H R, L) \to \mathrm{Ring}(
\varprojlim_{\sOGF^{op}} \pi_0^H R, L)$ is an isomorphism for each algebraically
closed field $L$. 
Combining this with the previous paragraph, the theorem follows. 
	\end{proof}

\begin{example}
Suppose $n(\sF) = 1$. 
For instance, this occurs if 	$G$ is a perfect group and $\sF=\sP$
is the family of all proper subgroups of $G$ (Example~\ref{exs:indices}). 

In this case, the 
idempotent $e_{\sF}$ belongs to the Burnside ring $A(G)$. We obtain a
decomposition of the symmetric monoidal $\infty$-category $\SpG$ as 
\[ \SpG \simeq \mathcal{C}_1 \times \mathcal{C}_2,  \]
where $\mathcal{C}_1$ consists of those $G$-spectra on  which $e_{\sF}$ is the
identity (equivalently, is a self-equivalence), and $\mathcal{C}_2$ consists of those $G$-spectra on which $e_{\sF}$
is null (cf.\ \cite{Bar09}).

We claim that $\mathcal{C}_1$ is equal to the subcategories of $\sF$-nilpotent,
$\sF$-complete, and $\sF$-torsion $G$-spectra (which therefore all coincide). In
particular, $\sF$-nilpotence is a purely algebraic condition on
the homotopy groups of a $G$-spectrum in this case. 

We start by showing that every $\sF$-complete $G$-spectrum belongs to $\sFNil$. 
In fact, 
this follows from \Cref{thm:fnil-three-prime}, since our assumptions imply that
the associated $\sF$-spectral sequence has a horizontal vanishing line at $E_2$. 
We now invoke \cite[Prop~\ref{S-idempotentsplitnilp}]{MNNa} to obtain that 
the subcategories of $\sF$-nilpotent, $\sF$-complete, and $\sF$-torsion objects
in $\SpG$ all coincide and that 
there
is a splitting (of symmetric monoidal $\infty$-categories) of $\SpG \simeq
\mathcal{C}_1' \times \mathcal{C}_2'$ where $\mathcal{C}_1'$ consists of the
$\sF$-nilpotent objects and $\mathcal{C}_2'$ consists of the $\sF^{-1}$-local
objects. 

It remains to show that the two splittings of $\SpG$ coincide. 
To see this, observe that $e_{\sF}$ restricts to $1$ in $A(H)$ for $H \in \sF$.
As a result, $e_{\sF}$ acts as the identity on $\left\{G/H_+\right\}_{H \in
\sF}$ and therefore on the localizing subcategory they generate. It follows that 
$\mathcal{C}_1$ contains the $\sF$-torsion $G$-spectra, i.e., $\mathcal{C}_1
\supset \mathcal{C}_1'$. 
Conversely, if $X \in \SpG$ is $\sF^{-1}$-local, then its restriction to $\SpH$
for $H \in \sF$ is contractible; therefore the class $e_{\sF} \in A(G)$, as a
sum of classes induced from subgroups in $\sF$, acts by zero. Therefore,
$\mathcal{C}_2\supset \mathcal{C}_2'$. It now follows that $\mathcal{C}_1 =
\mathcal{C}_1', \mathcal{C}_2 = \mathcal{C}_2'$ as desired. 
	\end{example}

\subsection{The end formula}
\label{sec:endthm}

In this subsection, we will explain how to deduce from our methods rational and
$\cF_p$-isomorphism results as in \cite{Qui71b,HKR00,GS99}. 
These results will require studying a different homotopy limit spectral sequence and will require some additional machinery, which we will review.
The convergence properties of this new homotopy limit spectral sequence will be more subtle even when considering the family of \emph{all} subgroups; in particular these results will require as input a \emph{finite} 
$G$-CW complex (cf.\ \Cref{ex:badconv}).

Let $X$ be a $G$-space and let $E$ be a $G$-spectrum. 
Then we have a bifunctor
\( T \colon \mathcal{O}(G) \times \mathcal{O}(G)^{op} \to \mathrm{Ab}_* \)
(where $\mathrm{Ab}_*$ denotes the category of graded abelian groups)
given by the formula
\begin{equation} \label{bifunctorformula}  T(G/H, G/K) = (E^K)^* ( X^H).  \end{equation}
Here $E^K = \Hom_{\GSpec}(G/K_+, E)$ denotes the $K$-fixed point spectrum of $E$ and $X^H$ denotes the subspace of $H$-fixed points of $X$.
Note that the $E^K$ (resp.\ $X^H$) are non-equivariant spectra (resp.\ spaces) here. 

Fix a family of subgroups $\sF$ of $G$.
We can form the \emph{end} $\int_{\sOGF^{op}} T$ of this bifunctor, consisting precisely of
those tuples of elements $\left\{x_H \in (E^H)^*(X^H)\right\}_{H \in \sF}$ which have
the following property: whenever we are given a map $G/H \to G/H'$, the natural images of $x_H$ and
$x_{H'}$ in $(E^H)^*(X^{H'})$ agree. Such classes naturally arise from the following construction. Given a map of $G$-spectra $X_+ \to \Sigma^* E$ and an $H\in \sF$, we obtain a map of spectra $X^H_+\to \Sigma^* E^H$. Altogether these define a natural comparison map
\begin{equation} \label{endcompmap}  E_G^*(X) \to \int_{\sOGF^{op}} (E^H)^* (X^H).  \end{equation}

The main result of this section is the following. 
\begin{thm} 
\label{endformula}
Suppose $X$ is a finite 
 $G$-CW complex. 
 Let $E$ be an $\sF$-nilpotent $G$-spectrum.  Then 
 \eqref{endcompmap} becomes an isomorphism after inverting $|G|$. 
 In fact, the kernel and cokernel of \eqref{endcompmap} are both annihilated by
 a power of $|G|$.

 Suppose in addition that $E$ is an $A_\infty$-algebra in $\GSpec$ which is
 homotopy commutative. Then 
 the map
 \eqref{endcompmap} is a uniform $\cN$-isomorphism and 
 for any prime $p$, its $p$-localization   is a uniform $\cF_p$-isomorphism. 
\end{thm} 


\begin{remark}\label{ex:badconv}
	We note that unlike \Cref{thm:gen-gen-artin,thm:gen-gen-fiso}, \Cref{endformula} can fail if $X$ is not assumed to have the equivariant homotopy type of a finite 
$G$-CW complex.  
	
For instance, consider $G = C_2$ and consider Borel-equivariant
$\mathbb{F}_2$-cohomology, i.e., the $C_2$-spectrum $\underline{H \mathbb{F}_2}$.
Take the $C_2$-space $X = E C_2$ and the family $\sF = \sAll$. 
Since $X^{C_2} = \emptyset$, the description of the end becomes
	\[\left( \int_{G/H\in \sO(C_2)^\op} (\underline{H
	\mathbb{F}_2}^{H})^\ast(X^{H})\right)\cong H \mathbb{F}_2	^\ast( \ast)^{C_2} = \mathbb{F}_2 ,\]
while $( \underline{H \mathbb{F}_2})^*_{C_2}(X)\simeq H \mathbb{F}_2^*(BC_2) \simeq \mathbb{F}_2[e]$ for $|e| =
1$. 
Here the kernel of \eqref{endcompmap} is not nilpotent.

As another example, let us take $G=C_2$, $X=EC_2$, $\sF=\sAll$, and $E=\KG$, $C_2$-equivariant complex $K$-theory. So $\KG_{C_2}^0(X)\cong \KU_{C_2}^0(\ast)\cong KU^0(BC_2)\cong \bZ\oplus \bZ_2$ where $\mathbb{Z}_2$ is topologically generated by a non-nilpotent element. Since $X^{C_2}=\emptyset$, the calculation of the end simplifies:
	\[\left( \int_{G/H\in \sO(C_2)^\op} (\KG^{H})^0(X^{H})\right)\cong \mathrm{eq}\left(KU^0(X)\rightrightarrows \prod_{C_2} KU^0(X)\right)\cong KU^0(\ast)\cong \mathbb{Z}.\]
	We easily identify the comparison map in \eqref{endcompmap} as the augmentation map $\mathbb{Z}\oplus \mathbb{Z}_2\to \mathbb{Z}$.  The kernel of this map is the $\mathbb{Z}_2$-summand, which is neither torsion nor nilpotent.

\end{remark}

\begin{remark} 
Quillen's argument for \Cref{endformula} (for mod $p$ cohomology) in
\cite[Thm.~6.2]{Qui71b} is based on different techniques. 
Given a $G$-space $X$, 
the strategy is to consider the homotopy 
orbits $X_{hG}$ as a space mapping to the strict orbits $X/G$ and the Leray
spectral sequence (in sheaf cohomology) for this map. This shows that
\eqref{endcompmap} is an $\cF_p$-isomorphism for a family of subgroups containing the 
isotropy of $X$. To get to the family of elementary abelian groups, Quillen uses
an argument (which has since become standard) involving the flag variety of a
faithful representation, which will also appear in a different form later in
this paper. 

By contrast, we will set up another spectral sequence for 
$E_G^*(X)$ and show that \eqref{endcompmap} arises as an edge homomorphism, and
then show that the spectral sequence collapses with a horizontal vanishing line
at a finite stage. 
Our approach follows the strategy of \cite[Sec. 2--3]{HKR00}, who consider the
case where $|G|$ is inverted. 

Indeed,  the $\mathbb{Z}[1/|G|]$-local version of \Cref{endformula} for complex-oriented Borel-equivariant
 theories appears in \cite[Thm.~3.3]{HKR00}. For certain Borel-equivariant theories, variants of the second assertion  appear in \cite{GS99}.
\end{remark} 

We will require some preliminaries first. 
Recall that $\mathcal{S}_G$ denotes the (ordinary) category of $G$-spaces and
$\mathcal{S}$ denotes the category of spaces. 

Let $\sF$ be a family of subgroups of $G$. 
For every $G$-space $X$ with isotropy in $\sF$, we consider
the $G$-space $M(X) \stackrel{\mathrm{def}}{=}\bigsqcup_{H \in \sF} G/H \times X^H $
and the natural map $M(X) \to X$ of $G$-spaces which on the summand $G/H \times
X^H \to X$ induces the inclusion of spaces $X^H \to X$. 

We now observe that the construction $X \mapsto M(X)$ arises from an adjunction. 
We have functors
\[ 
(L, R): \prod_{H \in \mathcal{F}} \mathcal{S} \rightleftarrows \mathcal{S}_G.  \]
Here the left adjoint  $L$ 
sends a family of spaces $\left\{Y_H\right\}_{H \in \sF}$ to the $G$-space
$\bigsqcup_{H \in \sF} G/H \times Y_H$. 
The right adjoint $R$ sends a 
$G$-space $X$ to the family of spaces $\left\{X^H\right\}_{H \in \sF}$ given by
taking fixed points. 
The composite is given by $LR(X) \simeq M(X)$, and the map $M(X) \to X$ is the
counit. 

By general theory (compare the discussion in \cite[Example 3.15]{Dug09}), for any 
$X \in \mathcal{S}_G$, we have an augmented simplicial 
diagram in $\mathcal{S}_G$ (a form of the bar construction)
obtained by applying the unit and counits of the adjunction,
\begin{equation} \label{Mresolution}  \triplearrows \dots M(M(X))
\rightrightarrows M(X)   \to X.
\end{equation}
Furthermore, we have the following two standard properties of
\eqref{Mresolution}: 
\begin{enumerate}
\item  
The augmented simplicial diagram \eqref{Mresolution} admits a splitting after applying $H$-fixed points for any 
subgroup $H \in \sF$ (indeed, after applying the right adjoint $R$). 
\item If $X$ belongs to the image of $L$, then \eqref{Mresolution} admits a
splitting in $\mathcal{S}_G$. 
\end{enumerate}

\begin{prop} 
For any $G$-CW complex $X \in \mathcal{S}_G$ with isotropy in $\sF$, the augmented simplicial diagram 
\eqref{Mresolution}
gives a simplicial resolution of $X$ in $\mathcal{S}_G$, i.e., the map
$|M^{\circ \bullet + 1}(X)|\to X$ is a weak equivalence.
\end{prop} 
\begin{proof} 
To see that \eqref{Mresolution} is a simplicial resolution in $\mathcal{S}_G$, it suffices to show that 
it becomes a simplicial resolution after taking 
$H$-fixed points for each $H \leq G$. For $H \in \sF$, taking $H$-fixed points turns
\eqref{Mresolution} into a split augmented simplicial diagram which is therefore
a resolution. For $H \notin \sF$, taking $H$-fixed points gives $\emptyset$ on
both sides. 
\end{proof}

For the next result, we will need to use the notion of \emph{nilpotent}
augmented cosimplicial diagrams in $\GSpec$. 
Given an augmented cosimplicial diagram $X^\bullet$ in $\GSpec$, we say that $X^\bullet$ is
\emph{nilpotent} if $X^{-1} \simeq \mathrm{Tot}(X^\bullet)$ (i.e, it is a
limit diagram) and the associated $\mathrm{Tot}$ tower is quickly
converging \cite[Sec. 3]{Mat15}. 
Nilpotent augmented cosimplicial diagrams form a thick subcategory of all
cosimplicial diagrams containing the split ones. 

\begin{prop} 
\label{endnilpotent}
For $X$ a finite $G$-CW complex with isotropy in $\sF$ and for any $G$-spectrum
$E$, 
the augmented cosimplicial diagram in $\GSpec$
\begin{equation} \label{MEdiag} F(X_+, E) \to F( M X_+, E) \rightrightarrows F(
MMX_+, E) \triplearrows \dots \end{equation}
is nilpotent. 
\end{prop} 
\begin{proof} 
The construction 
\eqref{MEdiag} 
takes finite homotopy colimits in $X$ to finite homotopy limits of augmented
cosimplicial diagrams. 
When $X = G/H$ for $H \in \sF$, \eqref{Mresolution} is split (hence nilpotent), and therefore so
is \eqref{MEdiag}. Since every finite $G$-CW complex with isotropy in $\sF$ is a finite homotopy
colimit of copies of $G/H, H \in \sF$, the result now follows. 
\end{proof} 

\begin{prop} 
\label{invGsplit}
Let $X$ be a finite $G$-CW complex with isotropy in $\sF$. 
Then the induced augmented simplicial diagram $\Sigma^\infty_+ M^{\circ \bullet +
1}(X) [1/|G|]$ 
obtained by applying $\Sigma^\infty_+(-)[1/|G|]$ to \eqref{Mresolution}
admits a splitting in the homotopy category $\ho(\GSpec)$ (i.e., the associated
chain complex is chain contractible). 
\end{prop} 
\begin{proof} 
Recall that, if $X, Y$ are finite $G$-CW complexes, 
we have the basic formula (cf. \cite[Lemma 3.6]{HKR00})
\[
\pi_0 \Hom_{\GSpec}(\Sigma^\infty_+ X, \Sigma^\infty_+ Y)[1/|G|] \simeq 
\prod_{H } \pi_0 \Hom_{\mathrm{Sp}}( \Sigma^\infty_+ X^H, \Sigma^\infty_+
Y^H)[1/|G|]^{W_H},
\]
where in the product, $H$ ranges over a set of representatives for conjugacy classes of subgroups of
$G$ with $W_H$ the associated Weyl group. 
We need to show that for any finite $G$-CW complex $Y$, the augmented simplicial
abelian group $\pi_0 \Hom_{\GSpec}( \Sigma^\infty_+ Y, \Sigma^\infty_+ M^{\circ
\bullet + 1} X)[1/|G|]$ admits a chain contraction, functorially in $Y$. 
Using the formula above, and averaging a chain homotopy over the Weyl groups, it
suffices to see that for each $H \leq G$, 
$\pi_0 \Hom_{\Spec}( \Sigma^\infty_+ Y^H, (\Sigma^\infty_+ M^{\circ
\bullet + 1} X)^H)[1/|G|]$ admits a chain contraction, functorially in $Y$. 
It thus suffices to see that
after taking $H$-fixed points for any $H \leq G$, \eqref{Mresolution} admits a
splitting; when $H \in \sF$ we saw this above, and for $H \notin \sF$ everything
is empty. 
\end{proof}

We now return to the main result of this section.
We begin with an elementary remark. 
\begin{remark} 
\label{Fapproximable}
Let $M: \mathcal{S}_G^{op} \to \mathrm{Ab}$ be a functor which preserves finite
coproducts. Given a family $\sF$ of subgroups of $G$, we say that $M$ is \emph{$\sF$-approximable}
if $M(X) \to \varprojlim_{G/H \in \sOGF^{op}} M(X \times G/H)$ is an
isomorphism. Equivalently, if $S = \bigsqcup_{H \in \sF} G/H$, then 
$$M(X) \to M(X \times S) \rightrightarrows M(X \times S \times S)$$ is an
equalizer diagram. 

Let $F: \mathcal{S}^{op} \to \mathrm{Ab}$ be a functor which preserves finite
coproducts. For a subgroup $H \leq G$, let $F_H: \mathcal{S}_G^{op} \to \mathrm{Ab}$ be the functor which
sends $X \in \mathcal{S}_G$ to $F(X^H)$. Then for any family $\sF$
containing $H$, $F_H$ is $\sF$-approximable, as one sees from the above
equalizer diagram. 
\end{remark} 
\begin{proof}[Proof of \Cref{endformula}]
Let $X$ be a finite $G$-CW complex. Suppose first that $X$ has isotropy in
$\sF$ (but $E$ is arbitrary). 
Consider the augmented cosimplicial resolution \eqref{Mresolution}.
\[ F(X_+, E) \simeq \mathrm{Tot}( F( MX_+, E) \rightrightarrows F(MMX_+, E)
\triplearrows  \dots ) \]
which we saw is nilpotent in \Cref{endnilpotent}. 
If we take $G$-fixed points, we obtain an associated Tot-spectral sequence
for $E_G^*(X)$. 
Moreover, after inverting $|G|$ the augmented cosimplicial resolution 
admits a splitting (\Cref{invGsplit}).

Unwinding the definitions, we find 
that the map $M(X) \to X$ of $G$-spaces induces the map 
on $E_G^*$-cohomology considered above
\[ E_G^*(X) \to \prod_{H \in \sF} (E^H)^*( X^H).   \]
Furthermore, the lift of this map to a map $E_G^*(X) \to\int_{\sOGF} (E^H)^*(X^H)$ 
is exactly the edge-map of the $\mathrm{Tot}$-spectral sequence. 

We now argue similarly as in the proofs of \Cref{thm:gen-gen-artin} and
\Cref{thm:gen-gen-fiso}. 
Since the tower is nilpotent, we have a 
horizontal vanishing line at a finite stage (\cite[Prop. 3.12]{Mat15}). 
Furthermore, because the augmented cosimplicial diagram
admits a splitting in the homotopy category
after inverting $|G|$, the $E_2$-page of the spectral sequence satisfies
$E_2^{s, t}[1/|G|] =0$ for $s > 0$. 
We note that for each $s$, there must exist a uniform power of $|G|$, say
$|G|^N$, such that $|G|^N E_2^{s,\ast} =0$. If not, 
we could replace the $G$-spectrum $E$ by a product of suspensions of $E$ in such
a manner that would contradict 
$E_2^{s, t}[1/|G|] =0$ for $s > 0$. 

In view of the collapse of the spectral sequence at a finite stage and because
the terms $E_2^{s,t}$ are bounded $|G|$-power torsion for $s > 0$, 
it follows that 
the map $E_G^*(X) \to  \int_{\sOGF^{op}} (E^H)^*(X^H)$
(the edge homomorphism) has kernel and cokernel annihilated by a power of $|G|$, and that,
in the presence of a suitable multiplicative structure on $E$,
the map is an $\cN$-isomorphism integrally and an $\cF_p$-isomorphism after
localizing at a prime $p$. 
This completes the proof when $X$ is finite and has isotropy in $\sF$ (and does not use that
$E$ is $\sF$-nilpotent).

We now prove \Cref{endformula} for arbitrary finite $X$. 
For any $G$-space $X$, we define $T_{\sF}E_G^*(X)$ as the end on the
right-hand-side of \eqref{endcompmap}, so that
we have a natural map $E_G^*(X) \to T_{\sF}E_G^*(X)$ which we need to prove is
an isomorphism after inverting $|G|$ and an $\cF_p$-isomorphism after
localizing at $p$.

Consider the diagram:
\[ \xymatrix{
E_G^*(X) \ar[d]^{\pi_1}  \ar[r]^{\phi_1} &  T_{\sF} E_G^*(X) \ar[d]^{\pi_2} \\
\lim_{
\sOGF^{op}}E_G^*(X \times G/H)  \ar[r]^{\phi_2} &  \lim_{
\sOGF^{op}}T_{\sF} E_G^*(X \times G/H)  
}.\]
Here in the inverse limits, $G/H$ ranges over $\sOGF^{op}$.

We let $\mathcal{W}$ be the class of morphisms $f \colon A \to B$ of commutative rings 
such that $\ker f, \mathrm{coker} f $ are annihilated by a power of $|G|$ 
and such that for each prime number $p$, $f_{(p)}$ is a uniform $\cF_p$-isomorphism. 
 We note that this condition
 implies that $f$ is a uniform $\cN$-isomorphism. 
 We need to show that $\phi_1 \in \mathcal{W}$.

By considering each of the individual terms $(E^H)^*(X^H)$, we see easily that 
$\pi_2$ is actually an isomorphism (\Cref{Fapproximable}). 
Thus, it suffices to show that $\phi_2 \circ \pi_1 \in \mathcal{W}$.
Moreover, $\phi_2$ is a finite inverse limit of the maps 
$E_G^*(X \times G/H) \to T_{\sF} E_G^* ( X \times G/H)$, which belong to
$\mathcal{W}$, by the case of \Cref{endformula} that we have already proved.
As $\mathcal{W}$ is closed under finite limits by \Cref{Wlimit} below, it
follows that $\phi_2 \in \mathcal{W}$.  
Finally, 
$\pi_1 \in \mathcal{W}$ by \Cref{thm:gen-gen-artin} and
\Cref{thm:gen-gen-fiso}. Thus, $\phi_2 \circ \pi_1$ and therefore $\phi_1$
belong to $\mathcal{W}$ and we are done. 
\end{proof}

We used the following elementary algebraic lemma. 
\begin{lemma} 
\label{Wlimit}
Fix a positive integer $m$.
Consider  the class $\mathcal{W}$ of morphisms $f \colon A \to B$ of commutative rings which
have the property that: 
\begin{enumerate}
\item The kernel and cokernel $f$ are both annihilated by a power of $m$ (in
particular, $f \otimes \mathbb{Z}[1/m]$ is an isomorphism).  
\item For each prime number $p$, $f_{(p)}$ is a uniform $\cF_p$-isomorphism. 
\end{enumerate}
Then $\mathcal{W}$ is closed under finite limits. 
\end{lemma}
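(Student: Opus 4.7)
The strategy will be to reduce the claim to closure of $\mathcal{W}$ under binary products and equalizers in the arrow category $\mathrm{Fun}([1], \mathrm{CRing})$, since every finite limit can be built from these. Limits of arrows are computed componentwise on sources and targets, so I need only verify that each of these two operations preserves membership in $\mathcal{W}$. Binary products are immediate: kernels and cokernels of a product of arrows are the corresponding products, $m$-power torsion is closed under finite products, and the uniform $\cF_p$-isomorphism bounds can be taken as the maxima of the individual bounds.

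For equalizers, suppose $\alpha, \beta \colon (f \colon A \to B) \rightrightarrows (g \colon C \to D)$ are two parallel morphisms of arrows with $f, g \in \mathcal{W}$. The equalizer is the restriction $f' \colon A' \to B'$ with $A' = \mathrm{eq}(\alpha_A, \beta_A)$ and $B' = \mathrm{eq}(\alpha_B, \beta_B)$. Since $\ker f' \subseteq \ker f$, the kernel halves of both conditions (1) and (2) are automatic. For the cokernel half of condition (1), given $y \in B'$, I would use that $\mathrm{coker}(f)$ is $m^{N_f}$-torsion to obtain $a \in A$ with $m^{N_f} y = f(a)$; then $c := \alpha_A(a) - \beta_A(a)$ lies in $\ker g$ (from the commutativity of the defining squares together with $y \in B'$), so $m^{N_g} c = 0$ forces $m^{N_g} a \in A'$, and hence $m^{N_f + N_g} y = f(m^{N_g} a) \in f'(A')$.

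The main technical point will be the cokernel half of condition (2) for $f'$. Given $y \in B'_{(p)}$, I would write $y^{p^{K_f}} = f(a)$ using condition (2) for $f$, and set $u = \alpha_A(a)$, $v = \beta_A(a)$, $c = u - v$. The element $c$ lies in $(\ker g)_{(p)}$ and is both nilpotent of order $M_g$ (from condition (2) for $g$) and $p^N$-torsion (from condition (1) for $g$, since $(\ker g)_{(p)}$ is $m$-torsion and hence $p$-power torsion). I will then show $u^{p^l} = v^{p^l}$ for $l$ sufficiently large via the expansion
\[ u^{p^l} - v^{p^l} = \sum_{i=1}^{M_g - 1} \binom{p^l}{i} v^{p^l - i} c^i, \]
using that $v_p(\binom{p^l}{i}) = l - v_p(i) \geq l - \lfloor \log_p(M_g - 1) \rfloor$ for $1 \leq i < M_g$, which exceeds $N$ once $l$ is large enough; each term is then a $\mathbb{Z}_{(p)}$-multiple of $p^N c^i = 0$. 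Consequently $a^{p^l} \in A'_{(p)}$ and $f(a^{p^l}) = y^{p^{K_f + l}} \in f'(A'_{(p)})$, with $l$ bounded in terms of $M_g$, $N$, and $K_f$.

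The main obstacle is this binomial calculation, which crucially combines \emph{both} hypotheses on $g$: the nilpotence of $c$ coming from condition (2) alone is insufficient in the $p$-local (but not characteristic $p$) setting to force $u^{p^l} = v^{p^l}$, and the $p$-power torsion of $c$ coming from condition (1) alone is also insufficient. The interplay of the two hypotheses is what makes the argument go through.
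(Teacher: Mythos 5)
Your proposal is correct and is essentially the paper's own argument: the paper reduces instead to the terminal object and pullbacks, where the two partial lifts $x_0, x_1$ of $y$ produce a discrepancy $z = \overline{x}_0 - \overline{x}_1 \in \ker \phi_{01}$ that is simultaneously nilpotent and $p$-power torsion of uniform exponents, and the same binomial computation $(\overline{x}_1 + z)^{p^n} = \overline{x}_1^{p^n}$ for $n \gg 0$ closes the surjectivity step. Your equalizer formulation, with the discrepancy $c = \alpha_A(a) - \beta_A(a) \in \ker g$ killed by exactly the same interplay of hypotheses (1) and (2), is the identical mechanism run through the other standard generating class of finite limits, so the two proofs differ only in bookkeeping.
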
 
\begin{proof} 
The terminal isomorphism $0=0$ is evidently in $\mathcal{W}$, so it suffices to show that if we 
have
fiber product diagrams
of commutative rings
\[ \xymatrix{
A \ar[d] \ar[r] &  A_0 \ar[d] \\
A_1 \ar[r] &  A_{01},
} \quad  \quad 
\xymatrix{
B \ar[d] \ar[r] &  B_0 \ar[d] \\
B_1 \ar[r] &  B_{01},
}
\]
and a natural map of diagrams between them 
such that $\phi_0 \colon A_0 \to B_0, \phi_1\colon A_1 \to B_1, \phi_{01}\colon A_{01} \to B_{01} $ belong to
$\mathcal{W}$, then $\phi\colon A \to B$ belongs to $\mathcal{W}$. It follows easily via a diagram-chase that the kernel and cokernel of $\phi $
are annihilated by a power of $m$, so it
suffices to show that for each $p$, $\phi_{(p)}$ is a uniform $\cF_p$-isomorphism. 

Without loss of generality, we can assume that we are already localized at $p$. 
It follows easily that $\ker(\phi)$ is nilpotent. 
Suppose given $y \in B$, with images $y_0, y_1, y_{01}$ in $B_0, B_1, B_{01}$,
respectively. After replacing $y$ by a suitable $p$th power (chosen uniformly for all $y$), we can assume that $y_0,
y_1$ belong to the image of $\phi_0, \phi_1$. That is, there exist  $x_0, x_1
\in A_0, A_1$ which map to $y_0, y_1$. However, $x_0, x_1$ need not have the
same image in $A_{01}$. Let $\overline{x}_0, \overline{x}_1$ be the images in
$A_{01}$. Then $z = \overline{x}_0 - \overline{x}_1$ is $p$-power torsion 
and
nilpotent, both of uniform exponent, as it maps to zero in $B_{01}$ under $\phi_{01}$. It follows that 
$$\overline{x}_1^{p^n} = (\overline{x}_0 + z )^{p^n} = \overline{x}_0^{p^n}$$
for $n \gg 0$, by the binomial theorem. In particular, $x_0^{p^n}, x_1^{p^n}$
have equal images in $A_{01}$, which implies that $y^{p^n}$ belongs to the
image of $\phi$ as desired. One sees that $n \gg 0$ can be chosen uniformly. 

\end{proof}

If we are only interested in the underlying \emph{variety} of $E_G^*(X)$, we
can make a further simplification. 
Consider the 
functor 
\[ K \colon \sOG \times \sOG^{op} \to \mathrm{Ring}, \quad 
K(G/H, G/L) = (E^L)^*( \pi_0 X^H),
\]
defined in a similar fashion as 
in \eqref{bifunctorformula}. 
Given a family of subgroups $\sF$, we have a similar natural map 
\begin{equation}
\label{endcompmap2}
E_G^0(X) \to \int_{\sOGF^{op}} (E^H)^0( \pi_0 X^H).
\end{equation}

We can also study the properties of this map. 
This recovers \cite[Theorem 2.4]{GS99} for complex-oriented theories. 
Taking $E$ to be 2-periodic mod $p$ cohomology, we can recover 
\cite[Theorem 6.2]{Qui71b} (at least for finite $G$-CW complexes).
\begin{cor} 
Suppose that $E$ is a homotopy commutative $A_\infty$-algebra in $\GSpec$
and $E$ is $\sF$-nilpotent. Suppose that $\pi_0^G E$ is a noetherian ring
and for any $H \leq G$ and $k\in\mathbb{Z}$, $\pi_k^H E$ is a finitely generated $\pi_0^G E$-module. Then for any finite $G$-CW complex $X$,
\eqref{endcompmap2} is a $\mathcal{V}$-isomorphism in the sense of \cite{GS99}. 
Moreover, the maps
\[ 
\int^{\sOGF}
\mathrm{Ring}\left((E^H)^0(X^H), \cdot \right)
\to 
\mathrm{Ring} \left( 
\int_{\sOGF^{op}} (E^H)^0( \pi_0 X^H), \cdot \right)
\leftarrow \mathrm{Ring}( E_G^0(X), \cdot)
\]
are isomorphisms when restricted to algebraically closed fields. 
\end{cor} 
\begin{proof} 
We observe that the edge maps
\[ (E^H)^0(X^H) \to (E^H)^0(\pi_0 X^H)  \]
in the associated Atiyah-Hirzebruch spectral sequences are $\mathcal{V}$-isomorphisms because each $X^H$ is a finite CW complex. 
Note that both sides are finitely generated $\pi_0^G E $-modules under our
hypotheses, and the map is surjective with nilpotent kernel. 
Then the result follows from Quillen's work as in 
\Cref{prop:stratification} in view of \Cref{endformula}. 
\end{proof}

\section{Defect bases and $\sF$-split spectra}\label{sec:split}
\subsection{Classical defect bases and $\sF$-split spectra}\label{sec:defect-base}
Classical induction theory centers around the notion of a defect base. To
define this, we will first need the following:
\begin{prop}[{Cf. \cite[Lemma 3.11]{Gre71}}]\label{prop:defect-base-exists}
  Let $R(-)$ be a Green functor for the group $G$, see \cite{Gre71,Web00}. 
  Then there is a unique minimal family $\sF$ such that  
	  the map \begin{equation}\label{eq:defect-base-surjective}
		 \Ind_{\sF}^G\colon \bigoplus_{H\in \sF }R(H)\rightarrow R(G)
	\end{equation} 
	is surjective.
\end{prop}
\begin{proof}
	It suffices to show that if $\sF_1, \sF_2$ are families such that 
$\Ind_{\sF_1}^G$ and $\Ind_{\sF_2}^G$ are surjective (equivalently, have image
containing the unit), then the same holds for
$\sF_1 \cap \sF_2$. This is a straightforward exercise with the double coset
formula \cite[Axiom G4, p. 44]{Gre71}. \end{proof}

\begin{defn}[{See \cite[Sec. 3]{Gre71}}]\label{def:defect-base}
	Let $R(-)$ be a Green functor for the group $G$. The \emph{defect base of
	$R$} is the minimal family $\sF$ of subgroups of $G$ such that 
	the map $\Ind_{\sF}^G$ above 
\eqref{eq:defect-base-surjective}
	is surjective. 
	The \emph{defect base} of a $G$-ring spectrum $R$ is the defect base of the Green functor $\pi_0^{(-)}R$. 
\end{defn}


To relate the notion of a defect base of a $G$-ring spectrum $R$, which only depends on $R$ through $\pi_0^{(-)} R$, to the derived defect base, we have the following: 
\begin{prop}\label{prop:characterize-split}
	Let $R$ be a $G$-ring spectrum. For a family of subgroups $\sF$ of $G$, consider the sum of the induction maps
	\( \Ind_{\sF}^G\colon \bigoplus_{H\in \sF} \pi_0^H R \rightarrow \pi_0^G R.\) 
	Then the following are equivalent:
	\begin{enumerate}
		\item\label{en:alg-1} The map $\Ind_\sF^G$ is surjective. 
		\item\label{en:alg-2} The product of the restriction maps \[
		\prod_{H\in \sF} \Res_H^G \colon R\rightarrow \prod_{H\in \sF} F(G/H_+,R)
		\]
		splits in $\SpG$.
		\item\label{en:alg-3} The $G$-spectrum $R$ is $\sF$-nilpotent and $\widehat{H}^*_\sF(\Gpi R)=0$.
		\item\label{en:alg-4} The $G$-spectrum $R$ is $\sF$-nilpotent and $\exp_{\sF}(R)\leq 1$.
	\end{enumerate}
\end{prop}
\begin{proof}
	First we will prove \eqref{en:alg-1}$\implies$\eqref{en:alg-4}. By
	assumption, there is for each $H\in \sF$  an element $m_H\in \pi_0^H R$ such that 
	\begin{equation}\label{eq:sum-of-inds}
		1=\sum_{H\in \sF}\Ind_H^G m_H \in \pi_0^G R.
	\end{equation} 
	The element $m_H\in \pi_0^H R$ is represented by a $G$-map $R\rightarrow
	F(G/H_+,R)$ and $\Ind_H^G m_H$ is obtained by postcomposing with the
	projection $F(G/H_+,R)\simeq R\wedge G/H_+\rightarrow R$. Assembling
	these together,
we find that the composite map
\[ R \xrightarrow{\prod_H m_H} \prod_{H \in \sF} F(G/H_+, R) \to  R   \]
is homotopic to the identity. This retraction implies that $R$ is
$\sF$-nilpotent with $\sF$-exponent $\leq 1$, proving (4).

The equivalence of \eqref{en:alg-2} and \eqref{en:alg-4} follows from
\Cref{prop:exponent} because $\prod_{H\in\sF}F (G/H_+,R)\simeq F(\sk_0 E\sF_+,R)$ with our preferred model for $E\sF$.

Now we will show \eqref{en:alg-2}$\implies$\eqref{en:alg-3}.  By assumption,
$R$ is a retract of the product spectrum $\prod_{H\in \sF}F (G/H_+,R)$, which
is $\sF$-nilpotent. Since $\sFNil$ is closed under finite products and retracts we see that $R$ is $\sF$-nilpotent.

	We will now show $\widehat{H}^*_\sF(\pi_*^{(-)} R)=0$. Since the
	Amitsur-Dress-Tate cohomology groups are bigraded modules over the algebra
	in bidegree $(0,0)$, it suffices to prove this claim when the bidegree is
	$(0,0)$. Furthermore, since $R$ is a retract of $\prod_{H\in \sF}
	F(G/H_+,R)$, it suffices to show that for each $H \in \sF$, $
	\widehat{H}^0_\sF(\pi_0^{(-)}  F(G/H_+,R)) = 0,$
and by naturality again it suffices to consider the case $R = S^0 $. 
However, in $\pi_0^G F(G/H_+, S^0)$, the unit is a norm from $\pi_0^H F(G/H_+,
S^0)$, so that it vanishes in the $\sF$-Tate cohomology, which is therefore
zero. 

Next, we prove \eqref{en:alg-3}$\implies$\eqref{en:alg-1}. Since $R$ is
$\sF$-nilpotent, the $\sF$-homotopy limit spectral sequence converges to
$\pi_*^G R$. The vanishing of the Amitsur-Dress-Tate cohomology groups implies
that this spectral sequence collapses at $E_2$ and the edge map induces an isomorphism $\pi_*^G R\cong \lim_{\sOGF^\op} \pi_*^H R$. Combining this with the identification of zeroth cohomology group from  \eqref{eq:tate} we obtain \[\widehat{H}^0_\sF(\pi_0^{(-)} R)\cong \left(\lim_{\sOGF^\op} \pi_0^H R\right) /(\Im \Ind_\sF^G)\cong \pi_0^GR/(\Im \Ind_\sF^G).\] Since these groups are zero by assumption, $\Ind_\sF^G$ is surjective.

\end{proof}

\begin{defn}
	We will say that a $G$-ring spectrum $R$ is $\sF$\emph{-split} if it
	satisfies any of the equivalent characterizations of
	\Cref{prop:characterize-split}. More generally, we will say a $G$-spectrum $M$ is $\sF$\emph{-split} if its endomorphism ring $\mathrm{End}(M)$ is $\sF$-split.
\end{defn}

\begin{remark}
	It follows from the definitions that the defect base of a $G$-ring spectrum $R$ is the smallest family $\sF$ such that $R$ is $\sF$-split. 
\end{remark}

\begin{remark}
	The $\sF$-split condition can be used to test for  projectivity and flatness
	(cf., \cite[Rem.~3.5.2]{HKR00}). For example, since $\KG$ is $\sF$-split for
	the family $\sF$ of Brauer elementary subgroups we know that for a
	$G$-spectrum $X$, $KU_*^G(X)$ is torsion-free if and only if $KU_*^H(X)$ is torsion-free for each $H\in \sF$. 
\end{remark}



\begin{prop}\label{prop:defect-base-for-rg}
  Suppose that $R$ is a $G$-ring spectrum such that $\pi_0^{(-)}R$ is isomorphic to the complex representation ring functor. Then the defect base of $R$ is the family $\sF$ of Brauer elementary subgroups of $G$, i.e., subgroups which are products of $p$-groups with cyclic subgroups of order prime to $p$ for some prime $p$.
\end{prop}
\begin{proof}
	This purely algebraic claim about the representation ring Green functor is a combination of Brauer's theorem and its converse due to J.~Green \cite[\S 11.3]{Ser77}.
\end{proof}

We now give two important cases in which the derived defect base and the defect
base automatically coincide.
Recall that a $G$-spectrum $M$ is \emph{connective} if, for every subgroup $H$ of $G$, $\pi_i^H M=0$ if $i<0$. 
\begin{prop}\label{prop:connective-theories}
	Suppose that $R$ is a connective $G$-ring spectrum and $\sF$ is a family of subgroups of $G$. Then $R$ is $\sF$-nilpotent  if and only if $R$ is $\sF$-split. In particular, the defect base and the derived defect base of $R$ coincide.
\end{prop}
\begin{proof}
	Clearly any $\sF$-split spectrum is $\sF$-nilpotent. For the other
	direction, suppose that $R$ is $\sF$-nilpotent, so the $\sF$-homotopy
	colimit spectral sequence converges to $\pi_*^G E\sF_+\wedge R\cong \pi_*^G R$
	by \Cref{prop:Fnil-implies-acyclic}. The connectivity assumption implies
	that $E^2_{0,0}$ is the only term contributing to $\pi_0^G R$ in this
	spectral sequence. Hence we obtain an isomorphism \[\pi_0^G R \cong E^2_{0,0}\cong H_0^G(E\sF_+;\pi_0^{(-)} R)\cong \colim_{\sOGF} \pi_0^H R.\] Since the $E_2$-edge map is an isomorphism on $\pi_0^G$, the $E_1$-edge map $\Ind^G_\sF$ is surjective and hence $R$ is $\sF$-split. 
\end{proof}

\begin{prop}\label{prop:split-after-inverting}
	Let $R$ be a $G$-ring spectrum. If $n(\sF)\in \pi_0^G R$ is a unit, then $R$ is $\sF$-nilpotent if and only if $R$ is $\sF$-split. In particular, the defect base and the derived defect base of $R$ coincide.
\end{prop}
\begin{proof}
	If $R$ is $\sF$-split, then it is $\sF$-nilpotent by definition. On the other
	hand, if $R$ is $\sF$-nilpotent then, by \Cref{prop:characterize-split}.\eqref{en:alg-3}, $R$ is $\sF$-split if and only if the Amitsur-Dress-Tate
	cohomology groups $\widehat{H}_{\sF}^*(\pi_*^{(-)} R)$ vanish. Now since
	$n(\sF)\in \pi_0^G R$ acts on these groups simultaneously by a unit and by zero by \Cref{prop:torsion}, we see that they must be zero.
\end{proof}


\subsection{Brauer induction theorems}
If we know the defect base of a $G$-ring spectrum we obtain an upper bound on the \emph{derived} defect base. We now include results
that enable us to go the other direction: if we know the derived defect base we
obtain an upper bound on the defect base.
\begin{prop}\label{prop:hyper-induction}
	Suppose that $R$ is an $\sF$-nilpotent $G$-ring spectrum. Let
	$\overline{\sF}\supset \sF$ be a family of subgroups satisfying the following
	condition: for each prime $p$,  
	if $H\leq G$ that fits into a short exact sequence
	\begin{equation} \label{nonsplitses} e\rightarrow N\rightarrow H\rightarrow H/N\rightarrow e
\end{equation} 
	where $N \in \sF$ and $H/N$ is a $p$-group,  then $H
	\in \overline{\sF}$. 
	Then $R$ is $\overline{\sF}$-split.
\end{prop}
\begin{proof}
	Since $\sF\subseteq \overline{\sF}$ and $R$ is $\sF$-nilpotent, $R$ is also $\overline\sF$-nilpotent. By \cite[Prop.~1.6]{Dre75b} we can find an $x\in \Im \Ind_{\overline{\sF}}^G$ and $y\in \ker\Res_\sF^G$ such than $x+y=1\in \pi_0^G R$. Since $y$ is nilpotent by \Cref{thm:gen-gen-fiso}, $x$ must be a unit and $\Ind_{\overline{\sF}}^G$ must be surjective as desired.  
\end{proof}

\begin{remark}
	Observe that any family $\overline{\sF}$ satisfying the assumption of
	\Cref{prop:hyper-induction} necessarily contains all $p$-Sylow subgroups of
	$G$. So when $R$ is a Borel-equivariant theory, the bound on the defect base
	of $R$ given by \Cref{prop:hyper-induction} will provide no information (cf.
	\Cref{prop:borel-sphere} below).
\end{remark}

When all of the subgroups in our given family $\sF$ are \emph{abelian}, and they often are, we can more explicitly identify a family $\overline{\sF}$ satisfying the assumption of \Cref{prop:hyper-induction}.
\begin{prop}\label{prop:hyper-ind-simplifies}
	Let $\sF\subset\sAb$ be a family of abelian subgroups of $G$. Then the set $\overline{\sF}$ of subgroups of the form \(G^\prime=H^\prime\rtimes P,\) where $P\in \bb{\sAll}$ is a $p$-group for some prime $p$ and $H^\prime\in \sF[p^{-1}]$ is a subgroup in $\sF$ of order prime to $p$, is a family of subgroups satisfying the assumption of \Cref{prop:hyper-induction}. 
\end{prop}

\begin{proof}
Consider the family $\overline{\sF'}$ of subgroups  $H \leq G$ that 
fit into a short exact sequence as in \eqref{nonsplitses} with $N \in \sF$ and
$H/N$ a $p$-group (for some $p$). We will show that any such $H$ belongs to
$\overline{\sF}$, so that $\overline{\sF} = \overline{\sF'}$. 
By \Cref{prop:hyper-induction}, this will suffice for the result.

Note first that $N = N_1 \times N_2$ where $N_2$ is a $p$-group and $p \nmid
|N_1|$, since $N$ is abelian by assumption. 
Now $N_1 \leq N$ is a characteristic subgroup and is therefore
normal in $H$. 
Therefore, we obtain a new short exact sequence
\[ e \to N_1 \to H \to H/N_1 \to e,  \]
where $N_1 \in \sF$ has order prime to $p$ and $H/N_1$ is a $p$-group. 
The Schur-Zassenhaus theorem now implies that this extension splits.
Consequently, $H \in \overline{\sF}$ as desired. 
\end{proof}

\subsection{Applications of $\sF$-split spectra}\label{sec:f-split-applications}
If a $G$-ring spectrum $R$ is $\sF$-split and $M$ is an $R$-module, then the Amitsur-Dress-Tate cohomology groups of $M$ vanish by \Cref{prop:characterize-split}. So under these hypotheses the $\sF$-homotopy limit and colimit spectral sequences collapse at $E_2$ onto the zero line and we obtain the following integral form of \Cref{thm:gen-gen-artin}:
\begin{thm}\label{thm:split-brauer}
	Let $R$ be an $\sF$-split $G$-ring spectrum and let $X$ be a $G$-spectrum.
	Then for each $R$-module $M$, each of the following maps
	\begin{equation}
	 \colim_{\sOGF} M^*_H(X)\xrightarrow{\Ind_\sF^G} M^*_G(X)\xrightarrow{\Res_\sF^G} \lim_{\sOGF^\op} M^*_H(X)
	\end{equation}
	\begin{equation}
	 \colim_{\sOGF} M_*^H(X)\xrightarrow{\Ind_\sF^G} M_*^G(X)\xrightarrow{\Res_\sF^G} \lim_{\sOGF^\op} M_*^H(X)
	\end{equation}
	is an isomorphism.
\end{thm}

We will now show how the $\sF$-split condition fits into Balmer's theory of
descent for triangulated categories with a monoidal product which is exact in
each variable \cite{Bal12}. For this we suppose that $R$ is an $E_2$-$G$-ring
spectrum so the $\infty$-category $\mod(R)$ of \emph{structured} $R$-modules is monoidal
\cite[Cor.~5.1.2.6]{Lur14}. Moreover, the monoidal product commutes with
homotopy colimits in each variable; in particular, tensoring with a fixed
module is an exact functor. It follows that the homotopy category
$\ho(\mod(R))$ of $R$-modules is an idempotent-complete triangulated category 
with a monoidal structure that is exact in each variable (cf.\  \cite{Man12}).

Now given a family of subgroups $\sF$, let $X=\coprod_{H\in \sF} G/H$ and
$A=F(X_+,R)$. The $R$-algebra structure on $A$ defines a monad $T$ on $\ho(\mod(R))$ where $TM=A\wedge_R M$. The forgetful functor $U_T$ from $T$-algebras in $\ho(\mod_R)$ to the underlying category $\ho(\mod_R)$ admits a right adjoint $F_T$. This defines a comonad $C=F_TU_T$ on the category of $T$-algebras and we let $\Desc_{R}(\sF)$ denote the category of $C$-coalgebras in $T$-algebras. The free algebra functor $F_T$ canonically lifts to a functor \[ Q_\sF\colon \ho( \mod(R)) \rightarrow \Desc_{R}(\sF)\] and we will say that $R$ \emph{effectively descends along $\sF$} if $Q_\sF$ is an equivalence of categories.

\begin{prop}\label{prop:balmer}
	Let $R$ be an $E_2$-$G$-ring spectrum and $\sF$ a family of subgroups. Then the following are equivalent:
	\begin{enumerate}
		\item $R$ effectively descends along $\sF$.
		\item $R$ is $\sF$-split.
	\end{enumerate}
\end{prop}
\begin{proof}
	 According to \cite[Cor.~3.1]{Bal12}, $R$ effectively descends along $\sF$
	 if and only if $A\wedge _R( -  )$ is faithful. Moreover, this condition is
	 equivalent to the unit $R\rightarrow A$ admitting a retraction in
	 $\ho(\mod(R))$. Indeed, if we have a retraction then clearly the functor is
	 faithful; the other implication is \cite[Prop.~2.12]{Bal12} but we also include a more direct argument for the
	 special case at hand: To see that $R\to A$ admits a retraction, we need to argue that the map
	 $\mathrm{hofib}(R\to A)\to R$ is zero. We can check this after smashing with $A$, and hence it suffices to see
	 that $A\cong R\wedge A\to A\wedge A$ admits a retraction; such is furnished by the multiplication map.
	 
	 Finally, since the unit
	 map \[R\rightarrow A\simeq \prod_{H\in \sF} F(G/H_+,R)\] is the product of the restriction maps, we see that $R$ is $\sF$-split if and only if $R$ effectively descends along $\sF$.
\end{proof}

\begin{example}\label{ex:ksigma_3}
	In \Cref{prop:defect-base-for-rg} we saw that $\KG$ is split for the family of Brauer elementary subgroups. When $G=\Sigma_3$, then the family of Brauer elementary subgroups is the family $\sC$ of cyclic subgroups and the category $\sO(\Sigma_3)_\sC$ has a very simple form. So, in this case, one can more explicitly identify the target of the restriction isomorphism in \Cref{thm:split-brauer}.

	For this purpose we fix Sylow subgroups $C_2$ and $C_3$ of $\Sigma_3$ and
	let $W_{\Sigma_3} C_3\cong \bZ/2$ denote the Weyl group of the 3-Sylow
	subgroup. We leave it to the reader to show (e.g., using
	\Cref{prop:pullbacks}) that for any $\Sigma_3$-spectrum $X$, $\KG_{\Sigma_3}^*(X)$ fits into the following pullback diagram:
	\begin{equation}\label{eq:kg-sigma3}
	\xymatrix{
	 \KG_{\Sigma_3}^*(X) \ar[d]_{\Res_{C_2}^{\Sigma_3}}\ar[r]^{\Res_{C_3}^{\Sigma_3}} & \KG_{C_3}^*(X)^{\bZ/2}\ar[d]^{\Res_e^{C_3}} \\ 
	 W_{C_2}^*(X) \ar[r]_{\Res_e^{C_2}} &  \KG_{e}^*(X)^{\Sigma_3}
	 }
	\end{equation}
	Here \[W_{C_2}^*(X):=\KG_{C_2}^*(X)\cap \left(\Res_e^{C_2}\right)^{-1}
	\left({\KG_e^*(X)}^{\Sigma_3}\right).\] 
\end{example}

\subsection{Examples of $\sF$-split spectra; the Borel-equivariant
sphere}\label{sec:f-split-examples}
We will now establish some more of the claims made in \Cref{fig:examples}. 
\begin{prop}\label{prop:sphere}
  Let $T$ be a multiplicatively closed subset of $\bZ \setminus \{0\}$. Then the derived defect base of $S[T^{-1}]$ is equal to its defect base, which is $\sAll$, the family of all subgroups.
\end{prop}
\begin{proof}
	 Since $S[T^{-1}]$ is connective, the derived defect base of $S[T^{-1}]$ is
	 the same as the defect base of $\pi_0^{(-)}S[T^{-1}]\cong A(-)[T^{-1}]$
	 (\Cref{prop:connective-theories}). It suffices to show that the family
	 $\sP$ of proper subgroups is not a defect base. The image of $\Ind_\sP^G$ is generated as a $\bZ[T^{-1}]$-module by those finite $G$-sets whose isotropy is a proper subgroup of $G$. In particular, they have trivial $G$-fixed points. It follows that $[G/G]\in A(G)\otimes \bZ[T^{-1}]$ is not in the image of $\Ind_\sP^G$ and hence $\sP$ is not a defect base.
\end{proof}

Recall from \Cref{fig:families} that for a family $\sF$, the subfamily $\bb{\sF}\subseteq \sF$ is
defined to be the subset of groups in $\sF$ of prime power order. Let
$\sF_{(0)}=\sTriv$ and, for each prime $p$, let  $\sF_{(p)}\subseteq \sF$
denote the subfamily of $\sF$ whose elements are $p$-groups. 
For any
multiplicative subset $T$ of $\bZ\setminus \{0\}$, we let
\(\bb{\sF}[T^{-1}]\) denote the subset of groups in $\bb{\sF}$ whose order is not invertible in $\mathbb{Z}[T^{-1} ]$. 

\begin{defn}
	The \emph{constant} Green functor $R(-)$ associated to a ring $R$ is defined as follows:
	\begin{itemize}
		 \item For each subgroup $H$ of $G$, $R(G/H)$ is the ring $R$.
		 \item The restriction and conjugation maps of $R(-)$ are all identities. 
		 \item For each chain of subgroup inclusions $H<K<G$, $\Ind_H^K$ is multiplication by $|K/H|$.
	\end{itemize}
\end{defn}

\begin{prop}\label{prop:constant-examples}
	Let $R\neq 0$ be a ring and let $T$ be the set of elements in $\bZ$ which are
	invertible in $R$. Then $\bb{\sAll}[T^{-1}]$ is the defect base of both $HR$
	and $\bb{HR}$; $\bb{\sAll}[T^{-1}]$ is also the derived defect base of $HR$.
\end{prop}
\begin{proof}
	Since $HR$ is connective, its derived defect base and its defect base are
	equal (\Cref{prop:connective-theories}). The remaining claims follow from
since $\pi_0^{(-)}HR\cong \pi_0^{(-)}\bb{HR}=R$ is the constant Green functor at $R$. Indeed, the image of $\Ind_\sF^G$ inside $R(G/G)=R$ is the principal ideal generated by $\gcd(\{|G/H|\}_{H\in \sF})$, so $\Ind_\sF^G$ is surjective if and only if this integer is a unit in $R$.
\end{proof}

We now give a particularly deep example of the determination of the derived
defect base of a $G$-ring spectrum. 
We do not know if it is possible to carry this out without the use of the Segal
conjecture. 
\begin{thm}\label{prop:borel-sphere}
	The derived defect base of $\bb{S}$ is equal to its defect base $\bb{\sAll}$. More generally if $T\subseteq \bZ\setminus\{0\}$ is a multiplicatively closed subset, then the derived defect base of $\bb{S[T^{-1}]}$ is equal to its defect base $\bb{\sAll}[T^{-1}]$. 
	Consequently, if $M$ is an $\bb{S[T^{-1}]}$-module, then $\bb{M}$ is $\bb{\sAll}[T^{-1}]$-nilpotent.
\end{thm} 
\begin{proof}

We first bound \emph{above} the derived defect base of $\underline{S[T^{-1}]}$ as
claimed. 
We will prove the following two items: 
\begin{enumerate}
\item 
$\underline{S}$ is nilpotent (even split) for the family $\bb{\sAll}$.
\item If $G$ is a $p$-group for a prime number $p $ invertible in $\mathbb{Z}[
T^{-1}]$, then 
$\underline{S[T^{-1}]}$  is nilpotent (even split) for the family consisting of the trivial
group.
\end{enumerate}
By 
\cite[Prop.~\ref{S-reducetopropersubgroups}]{MNNa}, to see that $\underline{S}$
is $\sAll[T^{-1}]$-nilpotent,
it suffices to show that if 
$G$ is not a $p$-group for some $p$ noninvertible in $\mathbb{Z}[T^{-1}]$, then
$\underline{S}$ is 
nilpotent for the family of proper subgroups. So the two items above are
certainly sufficient. 

Recall that the category of dualizable structured modules over 
$\underline{S[T^{-1}]}$ embeds into $\fun(BG, \mathrm{Perf}( S[T^{-1}]))$ (where the
latter $S[T^{-1}]$ belongs to the category of non-equivariant spectra) by
\cite[Cor.~\ref{S-perfborelequiv}]{MNNa}. 

We now treat the assertions above. 
We begin with the first. 
For each prime $p$ dividing the order of the group $G$, we let $G_p \leq G$
denote a $p$-Sylow subgroup. 
We consider the composite
map \[  \psi_p\colon \underline{S} \xrightarrow{\mathrm{Ind}_{G_p}^{G}}
\underline{(G/G_p)_+}  \xrightarrow{\mathrm{Res}_{G_p}^{G}} \underline{S}, \]
which induces multiplication by $|G/G_p|$ on the underlying spectrum. 
The orders of the $\{|G/G_p|\}$ generate the unit ideal in $\mathbb{Z}$, so
there is a linear combination $\sum_{p \mid |G|} n_p \psi_p$ of the $\psi_p$
which is a self-equivalence $\phi$ of
$\underline{S}$.
We thus get a retraction diagram
\[ \underline{S} \xrightarrow{\Sigma n_p \mathrm{Ind}_{G_p}^{G}} \bigoplus_p
\underline{(G/G_p)_+} \xrightarrow{\phi^{-1} \circ  \Sigma
\mathrm{Res}_{G_p}^{G}}   \underline{S},  \]
which shows that $\underline{S}$
is nilpotent (even split) for the family $\bb{\sAll}$. 
For the second claim, we observe that in this case 
the composite \[\underline{S[T^{-1}]} \xrightarrow{\mathrm{Ind}_e^G} \underline{ G_+[T^{-1}]} \xrightarrow{\mathrm{Res}_e^G}
\underline{S[T^{-1}]}\] is an equivalence.

Finally, we show that the families are minimal as claimed. 
In general, if a prime $p$ is not inverted in $\bb{S[T^{-1}]}$, then
$\bb{S_{(p)}}$ is an $\bb{S[T^{-1}]}$-module.  
It thus suffices to show that if $G $ is a $p$-group, then $\underline{S_{(p)}}$ is
\emph{not} nilpotent for the family of proper subgroups.   
In \Cref{prop:finite-spectra} below, we will show that $\underline{S/p}$ is not nilpotent
for the family of proper subgroups, which will prove the claim. The proof of
\Cref{prop:finite-spectra} relies on the Segal conjecture. 
Note also that the since the derived defect base of $\underline{S[T^{-1}]}$ is
$\bb{\sAll}[T^{-1}]$ and $\underline{S[T^{-1}]}$ is split for this family,
that is also the defect base. 
\end{proof}

\begin{prop}\label{prop:finite-spectra}
	Let $p$ be a prime and $X$ a nontrivial finite $p$-torsion spectrum. Then the derived defect base of $\bb{X}$ is $\sAllp$. 
\end{prop}
\begin{proof}
	By assumption, $\bb{X}$ is an $\bb{S_{(p)}}$-module and hence
	$\sAllp$-nilpotent, so we just need to prove the minimality of this family.
	For this claim, it suffices to show that if $G$ is a $p$-group then $\Phi^G
	\bb{X}\not \simeq *$; in fact, we recall that if $X \in \GSpec$ is nilpotent
	for the family of proper subgroups then $\Phi^G(X) = 0$ (cf. the discussion
	in \cite[Def. 6.12]{MNNa}). 

	Let $i_*: \Spe \to \Spe_G$ denote the left-adjoint functor that sends the sphere to the
	$G$-sphere. 
	The class of spectra $Y$ such that the canonical map $i_* Y\rightarrow
	\bb{Y}$  is an equivalence of $G$-spectra forms a
	thick subcategory $\cC\subseteq \Spe$. Since $G$ is a $p$-group, the Segal
	conjecture proved by Carlsson \cite{Car84d} (in the equivalent form given in \cite[Prop.
	B]{MayMc82}) implies $S/p \in \cC$.
That is, the Segal conjecture implies that the Borel-completion $\underline{S}$
is obtained by an algebraic completion (at the augmentation ideal) of the
$G$-sphere $S$; when we work mod $p$, the map $S/p \to \underline{S}/p$ is an
equivalence as $G$ is a $p$-group. 
	Hence every finite $p$-torsion spectrum
	belongs to $\cC$. Since $X$ is nontrivial and $\Phi^Gi_*X\simeq X$, it follows that \( \Phi^G \bb{X} \simeq \Phi^G i_* X\simeq X\not \simeq *.\)
\end{proof}

\section{Derived defect bases via orientations}\label{sec:oriented}

In this final section, we give the main examples of derived defect bases. All of
these will rely on the use of \emph{orientations} together with thick
subcategory arguments, and a reduction (following Quillen) to the family of
\emph{abelian} subgroups via consideration of the variety of complete flags of a
$G$-representation.
 
\subsection{On $G$-spectra with Thom isomorphisms}

We will now consider how our conditions on a homotopy commutative $G$-ring spectrum simplify when the spectrum is oriented in the following sense (Cf.~\cite[Defn.~2.1, Rem.~2.2, Defn.~3.7]{GrM97}) :
\begin{defn}\label{def:thom-isos}
	Let $R$ be a homotopy commutative $G$-ring spectrum and $V$ an orthogonal
	representation of $G$. Then a \emph{Thom class for
	$V$} with respect to $R$ is a map of $G$-spectra $\mu_{V}\colon S^{V-|V|}\rightarrow  R$
	such that its canonical extension to an $ R$-module map
	\[  R\wedge S^{V-|V|}\xrightarrow{ R \wedge \mu_V}
	 R\wedge  R\xrightarrow{\mu}  R\] is an equivalence.
	 If $V$ is a representation of a subgroup $H \leq G$, then we say that a
	 \emph{Thom class for $V$} with respect to $R$ is a Thom class for $V$ with
	 respect to the $H$-spectrum $\Res^G_H R$. 
	
	Let $\cI$ be a class of representations closed under finite direct sums,
	restriction, and conjugation (e.g., unitary, oriented, $8n$-dimensional
	spin). We will say that $R$ has \emph{multiplicative Thom classes} for $\cI$
	if, for each subgroup $H \leq G$, it has Thom classes for every $H$-representation $V$ in $\mathcal{I}$ and the Thom classes are multiplicative: $\mu_{V\oplus W}=\mu_V\cdot  \mu_W$.

	In this case we define the \emph{oriented Euler class of $V$}, $\chi(V)\in R^{|V|}_H(*)$, to be the following composition \[
	\chi(V)\colon S^{-|V|}\xrightarrow{e_V\wedge S^{-|V|}}
	S^{V-|V|}\xrightarrow{\mu_V} \Res^G_H R. \]
\end{defn}

\begin{remark}
	Thom classes often appear in another guise which we will now describe (cf.~\cite[Rem.~2.2]{GrM97}).	
	Given an orthogonal $H$-vector bundle $V$ on an $H$-space $X$ we have an associated Thom space $TV$. Suppose that we are given a family of isomorphisms of $R^*_H$-modules
	\[
		\phi_V\colon R^*_H(\Sigma^{|V|}(X_+))\rightarrow R^*_H(TV)
	\] 
	which are natural in $X$ and $H$. In the case that $X$ is a point, $TV\simeq S^V$ and we can rewrite $\phi_V$ as an isomorphism
	\[
	\pi_*^H R\cong  \pi_*^H R\wedge S^{|V|-V}
	\] of $\pi_*^H R$-modules. The unit in $\pi_0^H R$ corresponds to a map
	 of $H$-spectra $ \mu_{V}\colon S^{V-|V|}\rightarrow \Res^G_H R$ which extends to an
	 equivalence of $\Res^G_H R$-modules \[\Res^G_H R\wedge S^{V-|V|}\rightarrow 
	 \Res^G_H
	 R.\]  So we see that natural Thom isomorphisms give rise to such Thom classes.
\end{remark}

In terms of the $RO(G)$-graded groups $\pi_\star^G R$, the Euler classes are evidently related to the oriented Euler classes by the following formula:
\[ \chi(V)=e_V\cdot \mu_V.\] Since the Euler classes are in the Hurewicz image
of $\pi_\star^G S$, they are necessarily central and it follows that
$\chi(V^n)=\chi(V)^n$. Since $\mu_V$ is necessarily a unit in $\pi_\star^G R$, we immediately obtain the following:
\begin{lemma}\label{lem:euler-classes}
	Suppose that $R$ is a homotopy commutative $G$-ring spectrum with
	multiplicative Thom classes for a class $\cI$ of representations and $V$
	admits the structure of an $\cI$-representation of $G$. Then, for each positive integer $n$, $\chi(V)^n$ is zero if and only if $e_V^n$ is zero.
\end{lemma}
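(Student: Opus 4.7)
The plan is to expand $\chi(V)^n$ using the given identity $\chi(V)=e_V\cdot\mu_V$ together with the two facts already recorded just above the lemma: the Euler class $e_V$ is central in $\pi_\star^G R$ (being in the Hurewicz image of $\pi_\star^G S$), and $\mu_V$ is a unit in $\pi_\star^G R$.

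More precisely, I would first use centrality of $e_V$ to commute it past $\mu_V$ and obtain
\[
\chi(V)^n \;=\; (e_V\cdot\mu_V)^n \;=\; e_V^n \cdot \mu_V^n.
\]
(Alternatively, by the multiplicativity of Thom classes, $\mu_V^n = \mu_{nV}$, which is again a unit; but we do not even need this identification.) Since $\mu_V$ is a unit, so is $\mu_V^n$, and multiplication by a unit is an isomorphism of the relevant $RO(G)$-graded group. The equivalence $e_V^n = 0 \iff \chi(V)^n = 0$ then follows immediately.

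There is no real obstacle here; the content of the lemma is entirely packaged in the two observations preceding its statement, and the only thing to be careful about is that the product $e_V\cdot\mu_V$ lives in a possibly noncommutative $RO(G)$-graded ring, which is why the centrality of $e_V$ (not merely graded commutativity) is invoked to justify rearranging the factors of $(e_V\mu_V)^n$.
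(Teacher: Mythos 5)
Your proposal is correct and is essentially the paper's own argument: the text immediately preceding the lemma establishes $\chi(V)=e_V\cdot\mu_V$, uses centrality of $e_V$ (as a Hurewicz image class) to get $\chi(V)^n=e_V^n\mu_V^n$, and concludes from invertibility of the Thom class. Nothing further is needed.
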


The following proposition will play a key role in proving the $\sF$-nilpotency of many $G$-spectra.
\begin{prop}\label{prop:oriented-nilpotence-criteria}
	Suppose that $R$ is a homotopy commutative $G$-ring spectrum with multiplicative Thom classes for either  unoriented, oriented, unitary, or $8n$-dimensional spin representations. Then the following are equivalent:
	\begin{enumerate}
		 \item The $G$-spectrum $R$ is $\sF$-nilpotent. 
		 \item For every subgroup $H \leq G$ with $H \notin \sF$, if 
	 		 \[  x\in \ker\left(\pi_*^H R\to  \prod_{K <
			 H} \pi_*^K R\right), \]
		 then $x$ is nilpotent. \label{it:oriented-nilpotence-criterion}
 	\end{enumerate}
\end{prop}
\begin{proof}
  The implication $(1)\implies (2)$ is an easy consequence of \Cref{thm:gen-gen-fiso},
  since for $H \notin \sF$, $\Res^G_H R$ is nilpotent for the family of proper
  subgroups of $H$.

  For the reverse implication, it suffices by \Cref{prop:ring-criteria-fnil} to
  show that for each $H\not\in \sF$, $e_{\tilde{\rho}_H}\in \pi_\star^H R$ is
  nilpotent. This class restricts to zero on all proper subgroups of $H$ by the
  first part of \Cref{prop:props-of-reps}, so if assumption
  \eqref{it:oriented-nilpotence-criterion} were stated in terms of
  $RO(H)$-graded groups we would already be done. Instead, we will use the Thom
  isomorphisms to reduce to the integer grading: 
  By \Cref{lem:cofinality}
  below there is an $n$ such that $n\tilde{\rho}_H$ has an Euler class
  $\chi(n\tilde{\rho}_H)\in \pi_*^H R$. This class is nilpotent by
  \Cref{lem:euler-classes} and assumption
  \eqref{it:oriented-nilpotence-criterion}. Since the nilpotency of
  $e_{\tilde{\rho}_H}$ is equivalent to the nilpotency of
  $\chi(n\tilde{\rho}_H)$ by \Cref{lem:euler-classes}, the result follows.
\end{proof}

\begin{lemma} \label{lem:cofinality}
	For any finite group $G$, 
	\begin{enumerate}
		\item $2\tilde{\rho}_G$ underlies a unitary and hence oriented representation and
		\item $8\tilde{\rho}_G$ underlies a spin representation whose dimension is divisible by 8.
	\end{enumerate}
\end{lemma}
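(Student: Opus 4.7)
My plan is to prove (1) by exhibiting an explicit complex structure on $2\tilde{\rho}_G$, and then to deduce (2) by a short Stiefel--Whitney class computation.

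For (1), I would consider the real orthogonal $G$-representation $\rho_G \oplus \rho_G$ equipped with the complex structure $J(v, w) := (-w, v)$, which commutes with the diagonal $G$-action and makes this space into a unitary representation (namely $\rho_G \otimes_{\mathbb{R}} \mathbb{C}$ together with its underlying real form). Let $e \in \rho_G$ denote the sum of the standard basis vectors, so that $\mathbb{R} e \subset \rho_G$ is the trivial subrepresentation and $\rho_G = \mathbb{R} e \oplus \tilde{\rho}_G$ is an orthogonal decomposition of $G$-representations. The $G$-fixed subspace of $\rho_G \oplus \rho_G$ is then the plane $P := \mathbb{R}(e, 0) \oplus \mathbb{R}(0, e)$, and since $J(e, 0) = (0, e)$, this plane is $J$-invariant. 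Its orthogonal complement is therefore both $J$-invariant and $G$-invariant, and it agrees with $\tilde{\rho}_G \oplus \tilde{\rho}_G = 2\tilde{\rho}_G$ as a real $G$-representation. This endows $2\tilde{\rho}_G$ with the desired unitary structure.

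For (2), the dimension $8(|G| - 1)$ is visibly divisible by $8$, so only the spin condition has content, i.e. the vanishing of $w_1$ and $w_2$ in $H^*(BG; \mathbb{F}_2)$. By (1), $2\tilde{\rho}_G$ admits a complex structure and is in particular oriented, so $w_1(2\tilde{\rho}_G) = 0$. From the mod $2$ Whitney formula $w(V \oplus V) = w(V)^2$ one reads off $w_1(2V) = 0$ and $w_2(2V) = w_1(V)^2$ for any real representation $V$. Applying this with $V = 2\tilde{\rho}_G$ yields $w_1(4\tilde{\rho}_G) = 0$ and $w_2(4\tilde{\rho}_G) = w_1(2\tilde{\rho}_G)^2 = 0$, and applying it once more with $V = 4\tilde{\rho}_G$ gives $w_1(8\tilde{\rho}_G) = 0$ and $w_2(8\tilde{\rho}_G) = w_1(4\tilde{\rho}_G)^2 = 0$, completing (2).

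The only real content is the observation in (1) that the $G$-fixed plane $P$ is a complex line for the natural complex structure on $\rho_G \otimes_{\mathbb{R}} \mathbb{C}$; everything else, in particular (2), is then formal.
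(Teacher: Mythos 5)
Your proof is correct and follows essentially the same route as the paper: part (1) is the observation that $2\tilde{\rho}_G$ is the realification of the complex reduced regular representation (which you simply make explicit via the complex structure on $\rho_G\otimes_{\mathbb{R}}\mathbb{C}$ and the $J$-invariance of the fixed plane), and part (2) is the same Whitney sum formula computation. If anything, your bookkeeping in (2) is slightly more careful than the paper's, which writes $w_2(8\tilde{\rho}_G)=2w_2(4\tilde{\rho}_G)$ where the Whitney formula really gives $w_1(4\tilde{\rho}_G)^2$ modulo $2$ — a term you correctly identify and kill.
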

\begin{proof}
	The real representation underlying the complex reduced regular representation of $G$ is $2\tilde{\rho}_G$, which proves the first claim. 

	Now $8\tilde{\rho}_G$ admits a spin structure if and only if the first two
	Stiefel-Whitney classes \[w_1(8\tilde{\rho}_G),w_2(8\tilde{\rho}_G)\in
	H^*(BG;\bF_2)\] vanish. 
By the Whitney sum formula $w_1(8\tilde{\rho}_G)=2w_1(4\tilde{\rho}_G)=0$ and hence $w_2(8\tilde{\rho}_G)=2w_2(4\tilde{\rho}_G)=0$ as desired.

  \end{proof}

\subsection{Equivariant topological $K$-theory}
In this subsection, we recall that we denote by $\sC$ the family of cyclic
subgroups of a group $G$. 

\begin{prop}\label{prop:kg-is-in-cnil}
	The derived defect base of both the complex and real equivariant $K$-theory spectra, $\KG$ and $\KOG$, is $\sC$.
\end{prop}
\begin{proof}

	First we show that $\KOG$ and $\KG$ are $\sC$-nilpotent. Since $\KG$ is a
	$\KOG$-module it suffices to prove this for $\KOG$. Now $\KOG$ admits
	multiplicative Thom classes for $8n$-dimensional spin representations and is
	$8$-periodic \cite[Thm.~6.1]{Ati68a}, so by 
	\Cref{prop:oriented-nilpotence-criteria},  it
	suffices to show that any element $x \in \pi_0^H \KOG=RO(H)$ which restricts
	to zero on all cyclic subgroups is zero. By elementary character theory, the complexification of $x$ in $R(H)$ is zero since it is zero on all cyclic subgroups. Now since the composite \(RO(H)\rightarrow R(H)\rightarrow RO(H)\) of the complexification and the forgetful map is multiplication by 2 and $RO(H)$ is torsion-free, $x$ is necessarily zero. 

	To see that this is a minimal family it suffices to prove this for the $\KOG$-module $\KG$. Now $\KG$ admits multiplicative Thom classes for unitary representations and is $2$-periodic \cite[Thm.~4.3]{Ati68a}. So it suffices, by \Cref{prop:oriented-nilpotence-criteria} again, to construct, for an arbitrary cyclic group $G$, a non-nilpotent element $x\in R(G)$ which restricts to zero on all proper subgroups. Since \[R(H_1\times H_2)\cong R(H_1)\otimes R(H_2),\] if we can do this in the case $G=C_{p^n}$ is a cyclic $p$-group, then we can tensor the classes together to obtain the desired element. 

	The character map $R(C_{p^n})\rightarrow \prod_{g\in C_{p^n}}\bC$
	is a ring map and an
	injection into a reduced ring, so any non-zero element
	of $R(C_{p^n})$ is non-nilpotent. Now $R(C_{p^n})=\bZ[z_n]/(z_n^{p^n}-1)$
	and $x=z_n^{p^{n-1}}-1$ is a nontrivial and hence non-nilpotent element of
	$R(C_{p^n})$. Under the inclusion $C_{p^{n-1}}\rightarrow C_{p^n}$ of the
	unique maximal proper subgroup, $z_n$ restricts to $z_{n-1}\in
	R(C_{p^{n-1}})$ and  we see that $x$ restricts to zero on this subgroup. It
	thus restricts to zero on all proper subgroups as desired.
\end{proof}

\begin{prop}\label{prop:borel-k-theory-is-in-cnil}
	The derived defect base of both the Borel-equivariant $K$-theory spectra $\KU$ and $\KO$ is $\bb{\sC}$.
\end{prop}
\begin{proof}
	First we will show that these spectra are $\bb{\sC}$-nilpotent. Since the
	arguments for the real and complex case are identical, we will just do the
	complex case. Since $\KG$ is split \cite[\S
	XVI.2]{May96}\cite[p.~458]{LMS86}, $\KU=F(EG_+,i_* \KG)\simeq F(EG_+,\KG)$. It follows that $\KU$ is a $\KG$-module and hence $\sC$-nilpotent by \Cref{prop:kg-is-in-cnil}. Since $\KU$ is also an $\bb{S}$-module it is $\sC\cap \bb{\sAll}=\bb{\sC}$-nilpotent by \Cref{prop:borel-sphere}.

	We will now prove that this is the minimal such family for $\KU$. Since $\KU$ is a $\KO$-module this will establish the minimality claim for $\KO$ as well. Now for each cyclic $p$-group $G$, we will construct a non-nilpotent element $x\in \pi_0^G\KU$, which restricts to zero on all proper subgroups. Since $\mathit{KU}$ is complex-orientable, it has Thom classes for unitary representations and the minimality claim will then follow from \Cref{prop:oriented-nilpotence-criteria}.

	We have already constructed such an $x$ in $\pi_0^G \KG\cong R(G)$ in \Cref{prop:kg-is-in-cnil}, so it suffices to show that the natural ring map \[ i\colon \pi_0^G\KG\rightarrow \pi_0^G \KU\] is an injection. By the Atiyah-Segal completion theorem \cite{AtS69}, $\pi_0^G\KU$ is $\widehat{R}(G)$, the completion of $R(G)$ at the ideal of virtual representations of dimension zero, and $i$ is the completion map. This map is an injection for all $p$-groups $G$ by \cite[Prop.~6.11]{Ati61}, so the claim follows. 
	Note that the use of the Atiyah-Segal completion theorem to bound below the
	derived defect base of $\underline{KO}$
	parallels the use of the Segal conjecture in \Cref{prop:borel-sphere}. 
\end{proof}

\begin{remark}

	One can also show that the derived defect bases of $\KG$ and $\KOG$
	(resp.~$\KU$ and $\KO$) agree with an independent argument using Galois descent
	\cite[Prop.~\ref{S-FnilKOKU}]{MNNa}.
\end{remark}

There are at least two standard notions of `connective' equivariant $K$-theory. The first is $\KG_{\tau\geq 0}$ which is the standard connective cover: it admits a canonical map $\KG_{\tau\geq 0}\rightarrow \KG$ such that $\pi_i^{(-)}$ is an isomorphism when $i\geq 0$ and $\pi_i^{(-)}\KG_{\tau\geq 0}=0$ for $i<0$. By \Cref{prop:connective-theories} the derived defect base of $\KG_{\tau\geq 0}$ is its defect base and this is the family of Brauer elementary subgroups of $G$ by \Cref{prop:defect-base-for-rg}.
The more interesting variant is the following:
\begin{defn}
	Let $\kG$ denote the equivariant connective\footnote{Which is not generally connective!} $K$-theory spectrum constructed by Greenlees \cite{Gre04,Gre05}. This is defined by the following homotopy pullback along the self-evident ring maps:
	\begin{equation}\label{eq:def-kG}
	\xymatrix{
		\kG \ar[d]  \ar[r] &  \KG \ar[d] \\
		\ku \ar[r] &  \KU\simeq F(EG_+,\KG)
}
	\end{equation}
	The real analogue, $\koG$ is defined similarly.
\end{defn}

\begin{prop}\label{prop:greenlees-kg-nil}
	The derived defect base of $\kG$ and $\koG$ is $\sC\cup \sE$. 
\end{prop}
\begin{proof}
	Since the arguments for $\kG$ and $\koG$ are essentially identical, we will
	prove the claim for $\kG$. We have already shown in
	\Cref{prop:kg-is-in-cnil,prop:borel-k-theory-is-in-cnil} that the derived
	defect bases of $\KG$ and $\KU$ are $\sC$ and $\bb{\sC}$ respectively. In
	\Cref{prop:ku-is-in-secnil} we will show that the derived defect base of $\bb{\mathit{ku}}$ is $\bb{\sC}\cup \sE$.  It follows that each of these spectra is $\sC\cup \sE$-nilpotent. Since the $\infty$-category of $\sC\cup \sE$-nilpotent $G$-spectra is closed under homotopy pullbacks, $\kG$ is $\sC\cup \sE$-nilpotent. The required results for $\KOG$, $\KO$, and $\ko$ are proven in \Cref{prop:kg-is-in-cnil,prop:KO-is-in-cnil}.

	Since $\KG$ and $\ku$ are $\kG$-algebras via the above maps, the minimality claim follows from the minimality of the families for $\ku$ and $\KG$. 
\end{proof}



Let $G$ be a compact Lie group with an involution $g \mapsto \overline{g}$,
i.e., a \emph{Real Lie group} in Atiyah's terminology.
Then one can form a split extension of groups (the semidirect product) 
\[ 1 \to G \to \widetilde{G} \to C_2 \to 1.  \]
A $\widetilde{G}$-space is then a \emph{Real $G$-space}
in the sense of \cite[\S 6]{AtS69}. There is an equivariant cohomology
theory
$K\mathbb{R}_G^*$ on
$\widetilde{G}$-spaces $X$ such that, for a finite $\widetilde{G}$-CW complex,
$K\mathbb{R}_G^0(X)$
is the Grothendieck group of \emph{Real $G$-vector bundles on $X$.}
In \cite[Ch.\ 14]{HJJS08}, the Thom isomorphism theorem is proved for Real
$G$-vector bundles on compact $\widetilde{G}$-spaces. 
We let $K\mathbb{R}_G $ be a ring $\widetilde{G}$-spectrum representing this cohomology
theory. 

We have the following generalization of 
\Cref{prop:kr-defect-base}. 
\begin{prop} 
\label{rem:equi-kr}
Suppose $G$ (and therefore $\widetilde{G}$) is finite.
The derived defect base of the $\widetilde{G}$-spectrum $K\mathbb{R}_G$ is given by the family of
cyclic subgroups of $G$. 
\end{prop} 
\begin{proof} 
We will need the two following observations: 
\begin{enumerate}
\item  
Let $X$ be a finite $\widetilde{G}$-CW complex on which $G$ acts trivially, so
that it arises from a finite $C_2$-CW complex. Then we have 
$K\mathbb{R}^{*}_G(X) = K\mathbb{R}^\ast(X) \otimes_{\mathbb{Z}}
K\mathbb{R}_G^0(\ast)$.
\item We have $\mathrm{Res}^{\widetilde{G}}_G K\mathbb{R}_G  = KU_G = KU $. 
\end{enumerate}
By the second item, it suffices to show that $K\mathbb{R}_G$ is nilpotent for the family of 
subgroups of $G$. 
To show this, we first let $\sigma$ denote the real sign representation of $C_2$, regarded as a $\widetilde{G}$-representation. Then the first item
together with the calculation used in \Cref{prop:kr-defect-base}
shows that the Euler class $S^0 \to S^{3 \sigma}$ becomes null-homotopic after
smashing with $K\mathbb{R}_G$. This means that $K\mathbb{R}_G$ is a retract of $S(3 \sigma)_+
\wedge K\mathbb{R}_G$ and is therefore nilpotent for the family of subgroups of $G$. 
\end{proof} 

\begin{remark}
	We do not know of an extension of \Cref{prop:do-not-descend} along the lines of \Cref{rem:equi-kr} as of this time.
\end{remark}

\subsection{Complex-oriented Borel-equivariant theories}

The following is fundamental to all of the following calculations
of derived defect bases of Borel-equivariant spectra. 
\begin{thm}[{cf.~\cite{HKR00} and
\cite[Cor.~\ref{S-complexorBorequivabelian}]{MNNa}}]\label{prop:mu-is-anil}
 	  The derived defect base of the Borel-equivariant $G$-spectrum $\bb{MU}$ is $\bb{\sAb}$.
\end{thm}
\begin{proof}
	We begin by showing that $\bb{MU}$ is $\bb{\sAb}=\bb{\sAll}\cap\sAb$-nilpotent. Since $\bb{MU}$ is an $\bb{S}$-module and the latter is $\bb{\sAll}$-nilpotent, it suffices to show that $\bb{MU}$ is $\sAb$-nilpotent. Since $\bb{MU}$ has Thom isomorphisms for unitary representations, it suffices by \Cref{prop:oriented-nilpotence-criteria} to show that any $x$ in $MU^*(BG)$ which restricts to zero on each abelian subgroup is nilpotent.

	The following nilpotence argument is standard (see \cite[\S 4]{GS99}) and
	dates back to \cite{Qui71b}.  Let $F$ be the variety of complete flags
	associated to a faithful representation of $G$. This is a compact
	$G$-manifold with abelian isotropy, so it admits the structure of a finite
	$G$-CW complex, whose cells have abelian isotropy which we will now fix. By
	\cite[Prop.~2.6]{HKR00} we have an inclusion $MU^*(BG)\rightarrow
	MU^*(EG\times_G F)$. So it suffices to show that $x$ is nilpotent in the target ring. Filtering $F$ by its $G$-CW structure, there is a multiplicative spectral sequence \[ E_2^{s,t}=H^s_G(F;\pi_{-t}^{(-)}\bb{MU})\Longrightarrow MU^{t+s}(EG\times_G F)\] with the following properties:
	\begin{enumerate}
		\item\label{it:ker-edge} Any class $y\in MU^*(BG)$ which restricts to
		zero in $MU^*(BA)$ for each abelian subgroup $A$ belongs to the kernel of the edge homomorphism \[MU^*(EG\times_G F)\rightarrow E_2^{0,*}\subseteq E_1^{0,*}.\] This is a consequence of the following two facts: 
		\begin{enumerate}
			\item The flag variety $F$ has abelian isotropy and hence $E_1^{0,*}$ is a product of terms of the form $MU^*(EG\times_G G/A)\cong MU^*(BA)$.
			\item The $E_1$-edge homomorphism is the product of the restriction homomorphisms induced by a coproduct of projections of the form $G/A\rightarrow G/G$.
		\end{enumerate}
		\item\label{it:nil-bound} $E_2^{s,*}=0$ for $s>\dim F$ by definition of the spectral sequence.
	\end{enumerate} 
	Property \eqref{it:ker-edge} shows that $x$ must be detected in positive
	filtration, while the property \eqref{it:nil-bound} shows that $x$ is nilpotent.

	In \Cref{prop:ln-local} below, we will show that for every prime $p$ and integer $n$, the $\bb{MU}$-module $\bb{E_n}$ has $\sAbpn$ as its derived defect base. Since $n$ and $p$ are arbitrary this forces the minimality of the family for $\bb{MU}$. 
\end{proof}
\begin{remark}\label{rem:flag-bound}
	The argument with the flag variety above plays a key role in the unipotence
	results of \cite[\S\ref{S-unipsec}]{MNNa}. A consequence of those results
	(combined with \Cref{prop:exponent} above) is
	that if $G$ admits an $n$-dimensional faithful complex representation then
	we obtain an explicit upper bound \[ \exp_{\sAb}(\bb{MU})\leq n(n-1)+1\] on the $\sAb$-exponent of $G$-equivariant $\bb{MU}$.
\end{remark}

\subsection{Ordinary Borel-equivariant cohomology}
We will now discuss the further reduction one can make when one is over the
integers. 
\begin{prop}[cf.~\cite{Qui71b}]\label{prop:hfp-is-epnil}
	The derived defect base of $\bb{H\bF_p}$ is $\sEp$. 
\end{prop}
\begin{proof}
	We first prove that $\bb{H\bF_p}$ is $\sEp$-nilpotent. Since $\bb{H\bF_p}$
	is an $\bb{MU}$-module and an $\bb{S_{(p)}}$-module, $\bb{H\bF_p}$ is
	$\sAb\cap \sAllp=\sAbp$-nilpotent by
	\Cref{prop:mu-is-anil,prop:borel-sphere}. Moreover, $\bb{H\bF_p}$ has Thom
	isomorphisms for oriented representations. So by
	\Cref{prop:oriented-nilpotence-criteria}, it suffices to show that if $G=A$ is an abelian $p$-group and $x\in H^*(BA;\bF_p)$ restricts to 0 on each elementary abelian subgroup then $x$ is nilpotent. 

	The remainder of the argument follows from elementary group cohomology
	calculations: If $A=C_{p^{i_1}}\times \cdots \times C_{p^{i_n}}$, then there
	is a polynomial subalgebra $R=\bF_p[x_1,\ldots,x_n]\subset H^*(BA;\bF_p)$
	whose generators are in degree 2 and such that for any element $x\in
	H^*(BA;\bF_p)$, $x^p \in R$. Moreover, there is a maximal elementary abelian subgroup $E$ of $A$ such that the composite \[ R \rightarrow H^*(BA;\bF_p)\xrightarrow{\Res_E^A} H^*(BE;\bF_p)\] is an injection. It follows that if $x$ restricts to zero on $E$ then $x$ is nilpotent.

	To prove minimality of the family $\sEp$, we suppose that $G=E$ is an
	elementary abelian group. To see that $\bb{H\bF_p}$ is not
	$\sP$-nilpotent we will construct an element $z\in H^*(BE;\bF_p)$, which
	restricts to zero on each proper subgroup of $E$ and belongs to the polynomial
	subalgebra $R$ of $H^*(BE;\bF_p)$ and hence is non-nilpotent. Let $y\in
	H^1(BC_p;\bF_p)=\bF_p$ be non-zero. For each nontrivial homomorphism
	$\phi\colon E\rightarrow C_p$, we obtain a nontrivial element
	\(y_\phi=\beta\phi^*(y)\in R\cap H^2(BE;\bF_p).\) By construction $y_\phi$
	restricts to zero on the maximal proper subgroup $\ker \phi$ of $E$. Since
	any proper subgroup is contained in the kernel of such a map, the element
	\[z=\prod_{\phi\in \mathrm{Gp}(E,C_p)\setminus \{0\}} y_\phi\] restricts to
	zero on any proper subgroup of $E$ and is non-nilpotent as desired because
	$z \in R$.
\end{proof}
\begin{cor}\label{cor:mo-is-e2-nil}
	The derived defect base of $\bb{MO}$ is $\sE_{(2)}$.
\end{cor}
\begin{proof}
Recall first that 
	$MO$ admits the structure of an $H \bF_2$-module via the work of Thom, cf.
	\cite[Theorem IV.6.2]{Rud98}.
 It follows that $\bb{MO}$ is $\sE_{(2)}$-nilpotent.
Since $\bb{H \mathbb{F}_2}$ is a $\bb{MO}$-algebra via the 	zeroth Postnikov
section, the
	minimality claim follows from the minimality claim for $\bb{H\bF_2}$ in
	\Cref{prop:hfp-is-epnil}. 
\end{proof}


\begin{example}\label{ex:quaternions}

We now examine the $\sE_{(2)}$-homotopy limit spectral sequence for $\bb{H\bF_2}$ when $G=Q_8$ is the quaternion group of order $8$. The edge homomorphism of this spectral sequence was first analyzed by Quillen \cite[Ex.~7.4]{Qui71b} and provides an example where this map is neither an injection nor a surjection, but is evidently an $\cF_2$-isomorphism. We will now calculate the rest of the spectral sequence and verify Quillen's result.

 In this case, the only nontrivial elementary abelian subgroup is the center
 $Z(Q_8)=C_2$. Since this is normal with quotient $C_2\times C_2$, by
 \Cref{lem:normal-subgroups} the $\sE_{(2)}$-homotopy limit spectral sequence
 (which is also the Lyndon-Hochschild-Serre spectral sequence) takes the form 
\[ H^s(B(C_2\times C_2);H^t(BC_2;\bF_2))\Longrightarrow H^{t+s}(BQ_8;\bF_2).\]
Since the action of $C_2\times C_2$ on the center is trivial, the local coefficient system is trivial.

Hence the $E_2$-page is isomorphic to $\bF_2[e_1,e_2]\otimes \bF_2[e_3]$ where $e_3$ generates the cohomology of the center and is in bidegree $(0,1)$. Now $e_1$ and $e_2$ are both in bidegree $(1,0)$ and for degree reasons they are permanent cycles. Since the spectral sequence does not have a horizontal vanishing line at the $E_2$-page we know that the last remaining indecomposable $e_3$, must support a differential. For degree reasons this must be a $d_2$.

To identify this differential we note that the $\sE_{(2)}$-homotopy limit
spectral sequence is acted on by $\mathrm{Aut}(Q_8)$. This follows from  the
observation that the family $\sE_{(2)}$ of elementary abelian 2-groups is invariant under automorphisms of $Q_8$, and all resolutions in question can therefore be
carried out respecting the $\mathrm{Aut}(Q_8)$-action. Since $\mathrm{Aut}(Q_8)$ fixes the center and acts transitively on the non-zero elements of the quotient group $C_2\times C_2$  \cite[Lem.~IV.6.9]{AdM04} we see that $d_2(e_3)$ must land in the invariants \[ H^2(BC_2\times C_2;\bF_2)^{\mathrm{Aut(Q_8)}}\cong \bF_2\{e_1^2+e_1 e_2 +e_2^2\}.\] This forces $d_2(e_3)=e_1^2+e_1 e_2 +e_2^2$. 
\begin{figure}
\centering 
\includegraphics[scale=0.75]{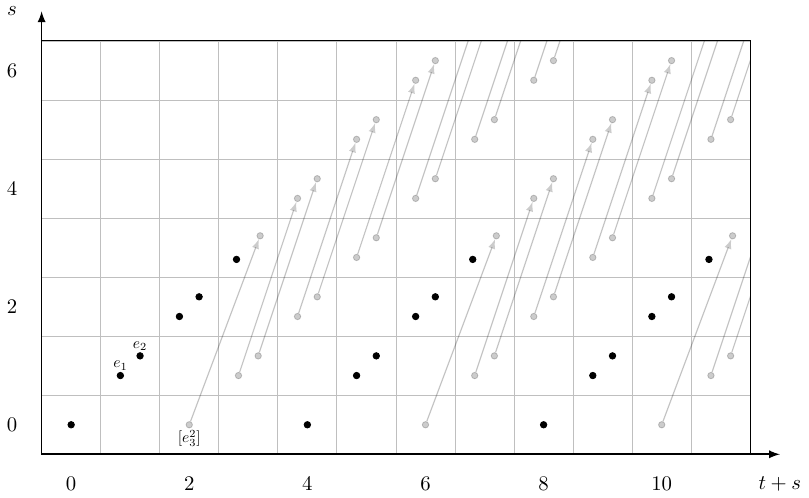}
\caption{The $E_3$-page of the $\sE_{(2)}$-homotopy limit spectral converging to $H^{t+s}(BQ_8;\bF_2)$.}\label{fig:q8-ss}
\end{figure}

The $E_3$-page shown in \Cref{fig:q8-ss} does not yet have a horizontal
vanishing line so there must be a further differential. By the same reasoning
as above we see that $[e_3^2]$ must support a differential and this must be a
$d_3$ which lands in the invariants of the $\mathrm{Aut}(Q_8)$-action. This forces $d_3([e_3^2])=e_1^2e_2+e_1e_2^2$. At this point there is no room for
further differentials and  the spectral sequence collapses at $E_4$. There are no additive or multiplicative extensions for degree reasons.  So we obtain: 
\[H^*(BQ_8;\bF_2)\cong \bF_2[e_1, e_2, [e_3^4]]/(e_1^2 +e_1e_2+e_2^2, e_1^2e_2+e_1e_2^2)\] (cf.~\cite[Lem.~IV.2.10]{AdM04}). 
Since there are elements of filtration $3$ at $E_\infty$, we find that 
$\exp_{\sE_{(2)}}( \underline{H \mathbb{F}_2}) \geq 4$.

We can in fact show that this is an equality, equivalently, that there is a
$3$-dimensional finite $Q_8$-CW-complex $X$ with isotropy in $\sE_{(2)}$
such that $\bb{H\bF_2}$ splits off $\bb{H\bF_2}\wedge X_+$. For this, we choose $X=\mathbb{P}(\mathbb{\mathbb{H}})$, the projective
space of the $4$-dimensional real representation of $Q_8$ afforded by quaternion multiplication on $\mathbb{H}\cong\mathbb{R}^4$. 
The required splitting
follows from the projective bundle theorem in mod $2$-cohomology (cf.\ \cite[Sec.~17,~Thm.~2.5]{Hus94}).
In fact, this  produces a map
\begin{equation} \label{projbundlemap} \bigvee_{i=0}^3 \Sigma^{-i}\underline{H \mathbb{F}_2} \to \underline{H
\mathbb{F}_2} \wedge \mathbb{D} X_+,  \end{equation}
classifying the generators of the free $H^*(B Q_8; \mathbb{F}_2)$-module
$\pi_*^{Q_8} ( \underline{H \mathbb{F}_2} \wedge \mathbb{D} X_+) \simeq
H_{Q_8}^*( X; \mathbb{F}_2)$. Since 
the projective bundle formula implies that \eqref{projbundlemap} is an equivalence on
$H$-fixed points for any $H \leq Q_8$, we get that \eqref{projbundlemap} is an equivalence and we
have the desired splitting after dualizing. 
\end{example}
\begin{remark}
 	In \cite{Qui71b}, Quillen actually considers a smaller indexing category than $\sOGEp$. The objects of this category $\cA$ are the elementary abelian subgroups of $G$ and the morphisms are the group homomorphisms $A\to B$ of the form $c_g\colon a\mapsto gag^{-1}$ for some $g\in G$. 

 	To relate these two notions we construct a functor $J\colon \sOGEp\to \cA$ sending $G/A$ to $A$. Given a $G$-map $f\colon G/A_1\to G/A_2$ satisfying $f(A_1)=gA_2$, we set $J(f)=c_{g^{-1}}$. Since the subgroups involved are abelian, this functor is well-defined. 

 	Now $J$ is a cofinal functor and hence the induced map $ \lim_{\cA^{\op}} F \to \lim_{\sOGEp^{\op}} J^*F$ is an isomorphism for every contravariant functor $F$ indexed on $\cA$. Now for a $G$-space $X$, the functor on $\sOGEp^{\op}$ sending $G/A$ to $H^*(EG\times_A X;\bF_p)$ extends over $J$, so Quillen's limit is isomorphic to the one considered here. 

 	However $J$ is not \emph{homotopy cofinal}; the higher limit terms are generally quite different. For example, in the case $G=Q_8$ just considered we have \[\sideset{}{^*}\lim_{\cA^{\op}} H^*(BA;\bF_2)\cong \sideset{}{^0}\lim_{\cA^{\op}} H^*(BA;\bF_2)\cong H^*(BZ(Q_8);\bF_2)\cong \bF_2[e_3].\]
 	Since the higher limit functors vanish we see that the homotopy limit spectral sequence using Quillen's indexing category \emph{will not} converge to $H^*(BQ_8;\bF_2)$.

Note also that 
the higher limit functors over the category $\cA$ (and its generalization for
arbitrary collections of $p$-subgroups) have been extensively studied in relation to the
theory of centralizer sharp homology decompositions \cite{Dw97, Dw98, JM92}, cf.
\cite{GS06} for an account and many examples. 
\end{remark}

 \begin{example}\label{ex:exponent}
	 We will now calculate the $\sF$-exponent of $Q_8$-equivariant $\bb{H\bF_2}$
	 for a slightly larger family than $\sE_{(2)}$. Let $f$ be one of the
	 nontrivial classes in $H^1(Q_8;\bF_2)$ and let $\sigma$ be the pullback of
	 the sign representation along $f$, so $\chi(\sigma) = f$. Now $f^3=0$ by
	 the calculation above so $\bb{H\bF_2}$ is a retract of $\bb{H\bF_2}\wedge
	 S(3\sigma)_+$. If we set $\sF$ to be the family of subgroups contained in
	 the kernel of $f$, then we see that the $\sF$-exponent of $\bb{H\bF_2}$ for $G=Q_8$ is at most 3. 
	Moreover, $\exp_{\sF}( \underline{H \mathbb{F}_2}) \geq 3$ because $f^2
	\neq 0$ (cf. \Cref{rem:lowerboundexp}), so we have in fact equality. 
\end{example}

We now prove the integral version of the above result. 
We will frequently use the following.  

\begin{lemma} 
\label{lem:arith-fracture}
Fix a finite group $G$.
For a spectrum $E$, we let $\sF_E$ be the derived defect base of
$\underline{E} \in \SpG$.
If $R$ is a ring spectrum, then $\sF_{R} = \bigcup_{p \mid  |G|} \sF_{R_p} = 
\bigcup_{p \mid |G|} \sF_{R_{(p)}}
$ where $R_p$ (resp. $R_{(p)}$)
denotes the $p$-completion (resp. $p$-localization) of $R$. 
\end{lemma}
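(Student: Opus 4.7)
The plan. The easy inclusion comes first: for each prime $p$, the canonical ring maps $R \to R_p$ and $R \to R_{(p)}$ make $\bb{R_p}$ and $\bb{R_{(p)}}$ into $\bb{R}$-modules, and since $\sF_R^{\mathrm{Nil}}$ is a thick $\otimes$-ideal it contains every $\bb{R}$-module; thus $\sF_{R_p}, \sF_{R_{(p)}} \subseteq \sF_R$, giving one direction of both claimed equalities.

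For the reverse inclusion I would exploit the arithmetic fracture square
\[
R \simeq R_\mathbb{Q} \times_{(\prod_p R_p)_\mathbb{Q}} \prod_p R_p.
\]
The Borel functor $\bb{(-)}$ commutes with rationalization and with arbitrary products (as one checks on fixed points, since $\bb{Y}^H = F(BH_+, Y)$ commutes with limits in $Y$), so we obtain an analogous pullback of $G$-spectra with $\bb{R}$ in the upper-left. Setting $\sF = \sTriv \cup \bigcup_{p \mid |G|} \sF_{R_p}$, the thick $\otimes$-ideal $\sF^{\mathrm{Nil}}$ is closed under finite limits in the stable $\infty$-category $\SpG$, so it suffices to show that each of the three outer corners lies in $\sF^{\mathrm{Nil}}$. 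The two rational corners are modules over $\bb{S\otimes\mathbb{Q}}$, which is $\sTriv$-nilpotent by \Cref{prop:borel-sphere}, and hence so are they. Writing $\prod_p \bb{R_p} = \prod_{p \mid |G|} \bb{R_p} \times \prod_{p \nmid |G|} \bb{R_p}$, the first factor is a finite product of $\sF_{R_p}$-nilpotent spectra and lies in $\sF^{\mathrm{Nil}}$.

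The main obstacle is the infinite product $P = \prod_{p \nmid |G|} \bb{R_p}$. The key observation is that for such $p$, $|G|$ is a unit in $\pi_0 R_p = \mathbb{Z}_p \otimes_{\mathbb{Z}} \pi_0 R$, so multiplication by $|G|$ is a self-equivalence of each $\bb{R_p}$ and hence of $P$. But for any $G$-spectrum $M$, the standard trace-transfer composite
\[
M \to G/e_+ \wedge M \to M
\]
realizes multiplication by $|G|$; when this is an equivalence, $M$ is a retract of $G/e_+ \wedge M$, which lies in $\sTriv^{\mathrm{Nil}}$ (an ideal, hence closed under smashing with any object), so $M$ itself lies in $\sTriv^{\mathrm{Nil}}$. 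Applied to $P$, this puts $P \in \sTriv^{\mathrm{Nil}} \subseteq \sF^{\mathrm{Nil}}$ and completes the fracture argument.

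For the $p$-localization statement, it suffices to prove $\sF_{R_{(p)}} = \sF_{R_p}$ for each $p \mid |G|$. The inclusion $\sF_{R_p} \subseteq \sF_{R_{(p)}}$ follows from the easy direction since $R_p$ is an $R_{(p)}$-algebra. For the reverse, note that $(R_{(p)})_q = 0$ for $q \neq p$ (as $q$ is invertible in $\mathbb{Z}_{(p)}$), so the arithmetic fracture for $R_{(p)}$ collapses to $R_{(p)} \simeq R_{(p),\mathbb{Q}} \times_{(R_p)_\mathbb{Q}} R_p$; rerunning the argument of the previous paragraph gives $\bb{R_{(p)}} \in \sF_{R_p}^{\mathrm{Nil}}$, i.e.\ $\sF_{R_{(p)}} \subseteq \sF_{R_p}$.
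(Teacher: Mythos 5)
Your proof is correct and rests on the same two pillars as the paper's: an arithmetic fracture square for $R$, together with the fact (\Cref{prop:borel-sphere}) that Borel-equivariant modules over $|G|^{-1}$-local ring spectra have trivial derived defect base. The only real difference is which fracture square you use. The paper takes the square relative to $|G|$, namely $R \simeq R[1/|G|] \times_{(\prod_{p\mid |G|} R_p)[1/|G|]} \prod_{p\mid |G|} R_p$, so that only the finitely many primes dividing $|G|$ appear and both $|G|$-inverted corners are $\bb{S[1/|G|]}$-modules, hence $\sTriv$-nilpotent with no further work. By fracturing over \emph{all} primes you create the extra factor $P=\prod_{p\nmid |G|}\bb{R_p}$, which you must then kill separately, and your treatment of it contains two statements that are false as written though harmless in context. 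First, $\bb{(-)}$ does \emph{not} commute with rationalization ($F(BH_+,-)$ fails to preserve the filtered colimit defining $Y_{\mathbb{Q}}$); you do not need this, only that $\bb{(-)}$ preserves the pullback (it does, being a limit-preserving functor) and that the two rational corners are $\bb{S\otimes\mathbb{Q}}$-modules. Second, for a general $G$-spectrum $M$ the composite $M \to G/e_+\wedge M\to M$ is multiplication by the class $[G/e]\in A(G)$, not by the integer $|G|$; these agree only on the underlying spectrum. Your conclusion still holds because $P$ is Borel-complete, so a self-map of $P$ is an equivalence as soon as its underlying map is one --- this is exactly how the paper argues in \Cref{prop:borel-sphere} --- but the caveat should be stated. (Alternatively, each $\bb{R_p}$ with $p\nmid |G|$ is a module over $\bb{S[1/|G|]}$, hence so is $P$, and \Cref{prop:borel-sphere} applies directly.) Your reduction of the localization statement to the completion statement via $\sF_{R_{(p)}}=\sF_{R_p}$ is a perfectly good substitute for the paper's remark that ``the argument for the localizations is similar.''
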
 
\begin{proof} 
We give the argument for the completions; the argument for the localizations is
similar. 
Since $\underline{R_p}$ is an algebra over $\underline{R}$, we have 
$\sF_{R} \supset \bigcup_p \sF_{R_p}$.
To 
obtain the opposite inclusion, we use the arithmetic square
\begin{equation}\label{eq:arith-sq}
\xymatrix{
R \ar[d]  \ar[r] &    \prod_{p\mid |G|} R_{p} \ar[d] \\
R[1/|G|] \ar[r] &   \left(\prod_{p\mid |G|}R_{p}\right)[1/|G|].
}
\end{equation}
This induces a pullback square upon taking Borel-equivariant theories. 
The Borel-equivariant forms of 
$R[1/|G|]$ and 
$\left(\prod_{p\mid |G|}R_{p}\right)[1/|G|]$ have trivial derived defect base
since they are $|G|^{-1}$-local (\Cref{prop:borel-sphere}). 
As a result, we obtain 
$\sF_{R} \subset  \sF_{\prod_{p \mid |G| }R_p} = \bigcup_{p \mid |G|}
\sF_{R_p}$ as desired. 
\end{proof} 

\begin{prop}[cf.~\cite{carlson}] \label{prop:HZ-is-enil}
	The derived defect base of $\bb{H\bZ}$ is $\sE$.
\end{prop}
We note that this result is essentially equivalent to \cite[Thm. 2.1]{carlson}. 
See also \cite[Thm.\ 4.3]{Bal} for another equivalent statement stated in a
language closer to ours. 
\begin{proof}
We first prove that $\bb{H\bZ}$ is $\sE$-nilpotent. By \Cref{lem:arith-fracture}, it suffices to show that $\bb{H\bZ_p}$ is $\sE_{(p)}$-nilpotent. Since $\bb{H\bZ_p}$ is both an $\bb{MU}$-module and an $\bb{S_{(p)}}$-module, we already know that $\bb{H\bZ_p}$ is $\bb{\sAb}_{(p)}$-nilpotent by \Cref{prop:mu-is-anil,prop:borel-sphere}.
Moreover, $\bb{H\bZ_p}$ has Thom isomorphisms for oriented representations. So by \Cref{prop:oriented-nilpotence-criteria} it suffices to show that if $A$ is an abelian $p$-group and $x\in H^*(BA;\bZ_p)$ restricts to 0 on each elementary abelian subgroup, then $x$ is nilpotent. 

Suppose we have such an $x\in H^*(BA;\bZ_p)$. Note that $|x|$ is necessarily greater than zero and, by \Cref{prop:hfp-is-epnil}, the mod-$p$ reduction of $x$ is nilpotent. In other words, a power of $x$ is divisible by $p$. It follows that there exists $k\geq 1$ and $z\in H^*(BA;\bZ_p)$ such that $x^k=|A|z$. Since $|A|\cdot H^*(BA;\bZ_p)=0$ for $*>0$, we see that $x^k=0$ as desired.
Thus $\bb{H \bZ}$ is $\sE$-nilpotent. 


Finally, to see that the derived defect base is precisely $\sE$, we note that
since $H\bF_p$ is an $H\bZ$-module, the derived defect base of $\bb{H\bZ}$ must
contain $\sE_{(p)}$ by \Cref{prop:hfp-is-epnil}. Varying $p$, we find that the
derived defect base must contain $\sE$, and therefore is equal to $\sE$s.
\end{proof}

\subsection{$L_n$-local Borel-equivariant theories}

Using Hopkins-Kuhn-Ravenel character theory, we now determine the minimal family for
Borel-equivariant Morava
$E$-theory and some related spectra. 

\begin{prop}[cf.~\cite{GS99,HKR00}] \label{prop:JW-theory} 
	Suppose that $E$ is a complex-oriented homotopy commutative ring spectrum
	with associated formal group $\bG$. Suppose further that:
	\begin{itemize}
		\item The coefficient ring $\pi_*E$ is a complete local ring with maximal ideal $m$.
		\item The graded residue field $\pi_*E/m$ has characteristic $p>0$.
		\item The localization $\pi_*E[p^{-1}]$ is non-zero.
		\item The mod $m$ reduction of $\bG$ has height $n<\infty$.
	\end{itemize}
	Then the derived defect base of $\bb{E}$ is $\sAbpn$.
\end{prop} 
\begin{proof} 
 	First we show that $\bb{E}$ is $\sAbpn$-nilpotent. Since $E$ is complex
	oriented and $p$-local we already know $\bb{E}$ is $\sAb\cap
	\sAllp=\sAbp$-nilpotent. So by \Cref{prop:oriented-nilpotence-criteria} it
	suffices to show that if $A$ is an abelian $p$-group and $x \in E^*(BA)$
	restricts to zero on $E^*(BA')$ for any $A' \leq A$ of rank $\leq n$, then
	$x$ is nilpotent. 

 	The results of \cite{HKR00} show that, under the given hypotheses, there is a natural injection 
 	\begin{equation}\label{eq:characters}
 	 E^*(BA)\hookrightarrow L(E^*)\otimes_{E^*}E^*(BA)\cong
	 \Hom_{\mathrm{Set}}(\mathrm{Gp}(\bZ_p^n, A), L(E^*))
 	\end{equation}
 	of $E^*(BA)$ into a ring of generalized characters valued in some particular nontrivial $E^*[p^{-1}]$-algebra $L(E^*)$. By assumption, $x\in E^*(BA)$ is trivial on all of the subgroups of $A$ which appear as images of some homomorphisms $\bZ_p^n\rightarrow A$. It follows that $x$ has trivial image in the character ring. Since $E^*(BA)$ injects into the character ring, $x$ must be zero.

	To see the minimality of this family, we will suppose that $A$ is a product of $n$ cyclic $p$-groups and find a non-nilpotent element $x\in E^*(BA)$ which restricts to zero on all proper subgroups
As in \cite[Thm. C]{HKR00}, there is an $\mathrm{Aut}(\mathbb{Z}_p^n)$-action
on the right-hand-side of \eqref{eq:characters} such that $p^{-1} E^*(BA)$ is
the $	\mathrm{Aut}(\mathbb{Z}_p^n)$-invariants.
	 Let $z\in 	 \Hom_{\mathrm{Set}}(\mathrm{Gp}(\bZ_p^n, A), L(E^*))$  be the generalized character which sends each surjective homomorphism $\bZ_p^n\rightarrow A$ to $1\in L(E^*)$ and all other homomorphisms to zero.
	 The element $z$ is $\mathrm{Aut}(\mathbb{Z}_p^n)$-invariant and therefore
	 belongs to $p^{-1} E^*(BA)$. Clearly $z$ is idempotent and restricts to
	 zero on all proper subgroups. Since the map in \eqref{eq:characters} is an
	 isomorphism after inverting $p$, there is an $x\in E^*(BA)$ and a natural
	 number $k$ such that $p^k z=x$. By construction, $x$ is non-nilpotent and restricts to zero on all proper subgroups.

\end{proof} 

The derived defect base shrinks if one quotients by an invariant ideal in
$\pi_0 E$. 
 For each positive integer $n$, let $\widehat{E}(n)$ denote the $I_n$-adically
 completed Johnson-Wilson theory. This is a complex oriented $p$-local
 cohomology theory whose coefficients $\pi_* \widehat{E}(n)$ are obtained by
 completing $\pi_*E(n)\cong \bZ_{(p)}[v_1,\dots,v_n,v_n^{-1}]$ at the ideal
 $I_n=(p,v_1\dots,v_{n-1})$ (here $v_0 = p$ conventionally).

\begin{prop} \label{prop:lubin-tate-quotients}
For $0 \leq k \leq n$, 
let $E=\widehat{E}(n)$ and $E/I_k = E/(p, v_1, \dots, v_{k-1})$.
The derived defect base of $\underline{E/I_k}$ is $\mathscr{A}_{(p)}^{n-k}$.
\end{prop} 
\begin{proof} 
Using the $v_n$-periodicity of $E$ it suffices to study the nilpotence of elements in degree 0. Let $\mathbb{G}$ be the formal group over $\pi_0(E/I_k)$ associated to the
complex-oriented ring spectrum $E/I_k$. 
Note that this is the reduction modulo $I_k$ of the formal group
associated to $E$. 
Let $A$ be an abelian $p$-group and let $A^{\vee}$ denote the Pontryagin dual. 

Recall that $\spec\left( (E/I_k)^0( BA)\right)$ is the formal scheme that classifies
homomorphisms $A^{\vee} \to \mathbb{G}$. 
Since $E^0(BA)$ is a finite free module over $\pi_0 E$, we have
\[ (E/I_k)^0(BA) \simeq \pi_0 (E/I_k) \otimes_{\pi_0E}
E^0(BA).  \]
By applying \cite[Thm.~2.3]{GS99} to $E$ and then base-changing along $\pi_0E \to \pi_0(E/I_k)$,
one has a morphism of schemes
\[  
\bigsqcup_{H \leq A} \mathrm{Level}(H^{\vee}, \mathbb{G})
\to
\spec\left( (E/I_k)^0( BA)\right), 
\]
which induces an isomorphism on underlying \emph{reduced schemes.}
Here $\mathrm{Level}(H^{\vee}, \mathbb{G})$ is the closed subscheme of $\spec\left(
(E/I_k)^0(BH)\right)$ classifying \emph{level structures} $H^{\vee} \to
\mathbb{G}$ and the above map factors through the map induced by the restriction homomorphisms.  
Moreover, 
$\mathrm{Level}(H^{\vee}, \mathbb{G})  $  is empty if and only if
$\mathrm{rank}(H) > n-k$. 

It follows now that the map of schemes
\begin{equation} \label{mapofschemes} \bigsqcup_{H<  A} \spec \left((E/I_k)^0(BH)  \right)
\to
\spec\left( (E/I_k)^0( BA) \right) \end{equation}
is surjective on geometric points if and only if $\mathrm{rank}(A) > n-k$. If
$\mathrm{rank}(A) > n-k$, it follows that any element in 
$(E/I_k)^0( BA)$ which restricts to zero on proper subgroups is nilpotent. This
proves that the derived defect base of $\underline{E/I_k}$ is
at most $\mathscr{A}_{(p)}^{n-k}$. 
Similarly, the analysis of \eqref{mapofschemes}
combined with \Cref{prop:stratification} shows that the derived defect base can
be no smaller. 
\end{proof} 
\begin{example} 
We show explicitly that the derived defect base of $\underline{K(n)}$ is
$\sTriv$. 
	Since $\sTriv$ is the minimal family, we only need to show that these
	$G$-spectra are $\sTriv$-nilpotent. Using \Cref{rem:borel-tate}, this can
	also be deduced from 
	\cite[Thm.~1.1]{GrS96}  (i.e., the vanishing of Tate spectra). 
	
	Since $K(n)$ is complex orientable and $p$-local we already know that $\bb{K(n)}$ is $\sAb\cap \sAllp=\sAbp$-nilpotent. Now since $K(n)$ admits Thom isomorphisms for unitary representations, it suffices to show that if $A=C_{p^{i_1}}\times\cdots \times C_{p^{i_k}}$ is an arbitrary abelian $p$-group and $x\in K(n)^*(BA)$ restricts to zero on the trivial subgroup then $x$ is nilpotent by \Cref{prop:oriented-nilpotence-criteria}. 
	By the K\"unneth isomorphism for Morava $K$-theory and the well-known
	calculations of the complex-oriented cohomology of cyclic groups, \[ K(n)^*(BA)\cong K(n)^*\otimes \bF_p[x_1,\dots, x_k]/(x_1^{p^{i_1n}},\dots,x_k^{p^{i_kn}}) \] and the kernel of the restriction map \( \Res_e^A\colon K(n)^*(BA)\rightarrow K(n)^*(Be)\) is the ideal $(x_1,\dots, x_k)$. This ideal is evidently nilpotent and hence so is $x$, proving the claim.

\end{example} 

We can also obtain a variant for the telescopic replacement for $K(n)$. 
\begin{prop}[Cf. \cite{Kuh04a}]\label{prop:kn-defect-base}
	Let $X$ be a type $n$-finite complex and let $T(n)$ be its $v_n$-periodic localization. Then the derived defect base of $\bb{T(n)}$ is $\sTriv$.
\end{prop}
\begin{proof}
	The spectrum $T(n)$ is a module over the $v_n$-periodic localization of the type $n$, $A_\infty$-ring spectrum $\mathrm{End}(X)$. So it suffices to consider the case $T(n):=\mathrm{End}(X)[v_n^{-1}]$. Since this spectrum is obtained from an $A_\infty$-ring by  inverting a central element \cite[Lem.~6.1.2]{Rav92} it is $A_\infty$ \cite[\S 7.2.4]{Lur14}. Now by \Cref{rem:borel-tate} it suffices to show that the associated Tate object $\wt{E}\sTriv\wedge \bb{T(n)}$ is contractible. This is \cite[Cor.~1.6]{Kuh04a}.
\end{proof}

\subsection{Hybrids of $L_n$-local theories and $H\mathbb{Z}$-algebras}

We now include examples of Borel-equivariant theories where there are two
contributions to the derived defect base: an $L_n$-local piece and a
$H\mathbb{Z}$-algebra piece. 

\begin{prop}\label{prop:BPn}
	The derived defect base of $\bb{BP\langle n\rangle}$ is $\sEp\cup \sAbpn$.
\end{prop} 
\begin{proof} 
	Since both $H\bF_p$ and the completed Johnson-Wilson theory $\widehat{E}(n)$ are $BP\langle n\rangle$-modules, the minimality claim will follow from the minimality results for these module spectra proven in \Cref{prop:hfp-is-epnil,prop:JW-theory}.

	To show that $\bb{BP\langle n\rangle}$ is $\sEp\cup \sAbpn$-nilpotent, we argue by induction on $n$. The base case $n=0$ follows from \Cref{prop:HZ-is-enil}. So suppose $n > 0$. Since $\bb{BP\langle n\rangle}$ has Thom isomorphisms for unitary representations, by \Cref{prop:oriented-nilpotence-criteria} it suffices to show that if $x\in BP\langle n\rangle^*(BG)$ restricts to zero in $BP\langle n\rangle^*(BA)$ for each $A\in \sEp\cup\sAbpn$ then $x$ is nilpotent. 

	First observe that $x$ maps to a nilpotent class in
	$(L_n BP \langle n\rangle)^*(BG)$ by \Cref{prop:ln-local} below. So by raising $x$ to a power, we may assume that $x$ is already zero in $(L_n BP \langle n\rangle)^*(BG)$.
	Moreover, by the inductive assumption, and raising $x$ to a sufficiently high power, we
	may assume that $x$ maps to zero under $r$ in the long exact sequence
	\[ \cdots \rightarrow (BP \langle n\rangle)^{* - |v_n|}(BG)\xrightarrow{v_n} (BP \langle n\rangle)^*(BG) \xrightarrow{r} (BP \langle n-1\rangle)^*(BG)\rightarrow \cdots.\] This means that $x=v_n y$ for some $y\in(BP \langle n\rangle)^*(BG)$.

	The $L_n$-localization map fits into the following fiber sequence of $BP \langle n\rangle$-modules
	\[ \Gamma_n BP \langle n\rangle  \to BP \langle n\rangle \to L_n BP \langle n\rangle.  \] Mapping $BG$ into this sequence we obtain another fiber sequence of $BP \langle n\rangle$-modules
	\[F(BG_+,\Gamma_n BP \langle n\rangle) \rightarrow  F(BG_+,BP \langle
	n\rangle) \rightarrow  F(BG_+,L_n BP\langle n \rangle).  \]
	By the long exact sequence in homotopy, $x$ lifts to $(\Gamma_n BP \langle n\rangle)^*(BG)$. 
	Now by \cite[Thm.~2.3, \S 3, and Thm.~6.1]{GrM95a} we see that $\Gamma_n BP
	\langle n  \rangle$ and hence $F(BG_+,\Gamma_n BP\langle n\rangle)$ are bounded above $BP\langle n\rangle$-modules. It follows that there is a power of $v_n$ such that
	\[ v_n^r x = 0\in (\Gamma_n BP \langle n\rangle)^*(BG), \quad r \gg 0.  \]
	Examining the long exact sequence we see that $0=v_n^r x\in BP\langle n \rangle^*(BG)$. Moreover, since $x=v_n y$, \[x^{r+1}=(v_n y)^{r+1}=v_n^r x y^r=0\] as desired.
\end{proof} 
The key properties of $BP\left \langle n\right\rangle$ that
are used above are that it is a ring spectrum with the desired homotopy groups,
that $BP\left \langle n\right\rangle \to L_n BP\left \langle n\right\rangle$ is
an equivalence on connective covers, and that it admits the standard cofiber sequence relating $BP\left \langle
n\right\rangle$ to $BP\left \langle n-1\right\rangle$. As such, the argument is
quite robust. 
We give another example of this argument below. 
\begin{prop} 
\label{regseqarg}
Let $R$ be a connective $E_\infty$-ring. 
Suppose that $\pi_*(R) \simeq \pi_0(R)[x_1, \dots, x_n]$, where $x_i \in
\pi_*(R)$ is in positive even degrees.
Consider the finite localization $R'$ of $R$ away from $(x_1,\dots, x_n)$
  (cf.\ \cite{GrM95a}).
For a spectrum $X$, let $\mathcal{F}_X$ denote the derived defect base of
$\underline{X}$ with respect to a finite group $G$.
Then  we have
\[ \mathcal{F}_R = \mathcal{F}_{R'} \cup \mathcal{F}_{H \pi_0 R} . \]
\end{prop} 
\begin{proof} 
Since $R'$and $H \pi_0 R$ are $R$-algebras, the inclusion $\mathcal{F}_R \supset \mathcal{F}_{R'} \cup \mathcal{F}_{H \pi_0 R}$ is evident. 

Let $G$ be a finite group such that $\underline{R'}$, $\underline{H \pi_0 R} \in
\GSpec$ are
nilpotent 
for the family of proper subgroups. It suffices to show that $\underline{R}$ is
too.
We will show that 
$\underline{R/(x_1, \dots, x_k)}$ is nilpotent for the family of proper
subgroups by descending induction on $k$. 
When $k = n$, this iterated cofiber is $H \pi_0 R$ and the induction hypothesis holds by assumption. 

Suppose now that 
$\underline{R/(x_1, \dots, x_{k+1})}$ is nilpotent for the family of proper
subgroups. We want to prove the analog with $k+1$ replaced by $k$.
Note that each $R/(x_1, \dots, x_i)$ admits an $A_\infty$-algebra structure in $R$-modules by
\cite[Cor.\ 3.2]{Ang08a}.
Since $\pi_*(R)$ is concentrated in even degrees, $R$ is complex orientable. 
It therefore suffices to show that if 
\[ u \in (R/(x_1, \dots, x_k))^*(BG)  \]
restricts to zero on proper subgroups, it is nilpotent. 
The inductive hypothesis shows that a power $u^k$ of $u$ is a multiple of
$x_{k+1}$, so it suffices to show that some power of $u$ is annihilated by a power of
$x_{k+1}$. 

Let $\Gamma_n R$ denote the fiber of $R \to R'$, so that $\Gamma_n R$ has
bounded-above homotopy groups via the spectral sequence of \cite[(3.2)]{GrM95a}. 
We consider similarly the cofiber sequence
\[ \Gamma_n R/(x_1, \dots, x_k) \to R/(x_1, \dots, x_k) \to  R'/(x_1, \dots,
x_k),   \]
where 
$\Gamma_n R/(x_1, \dots, x_k)$ has bounded-above homotopy groups. 
Replacing $u$ by a power of itself, we may assume that $u$ maps to zero in
$(R'/(x_1, \dots, x_k))^*(BG)$, so that it lifts to the module $(\Gamma_n R/(x_1, \dots,
x_k))^*(BG)$. However, we see as before that every element of this (as a bounded above object) is
annihilated by a power of $x_{k+1}$. 

\end{proof} 

\begin{cor}\label{prop:ku-is-in-secnil}
	The derived defect base of $\bb{\mathit{ku}}$ is $\sE\cup \bb{\sC}$.
\end{cor}
\begin{proof}
	By \Cref{lem:arith-fracture}, it suffices to check the derived defect base
	of $\bb{\mathit{ku}_{(p)}}$ is $\sE_{(p)}\cup \sC_{(p)}$ for each prime $p$
	dividing the group order. Now since $\mathit{ku}_{(p)}$ splits as a wedge of
	suspensions of $BP\langle 1\rangle$ and $BP\langle 1\rangle$ is a
	$\mathit{ku}_{(p)}$-module, the derived defect base of $\bb{\mathit{ku}_{(p)}}$ is the derived defect base of $\bb{BP\langle 1\rangle}$. The claim now follows from \Cref{prop:BPn}.
\end{proof}

\begin{prop} \label{prop:connective-kn}
The derived defect base of $\underline{k(n)}$ is $\mathcal{E}_{(p)}$.
\end{prop} 
\begin{proof} 
This is deduced similarly from the derived defect bases of $\underline{K(n)}$ and
$\underline{H \mathbb{F}_p}$. We leave the details to the reader. 

\end{proof} 
\subsection{Thick subcategory arguments}\label{sec:thick-examples} 
We will now show how to apply thick subcategory arguments, e.g. the
thick subcategory theorem of Hopkins-Smith \cite[Thm.~7]{HoS98}, to extend the results of the previous section to non-orientable Borel-equivariant 
 theories such as $\tmf$ and $\bb{L_n S^0}$.

\begin{prop}\label{prop:ln-local}
	The derived defect base of $\bb{L_n S^0}$ is $\sAbpn$. 
\end{prop}
\begin{proof}
	By the Hopkins-Ravenel smash product theorem, there exists $k\geq 0$ such
	that  $L_n S^0$ is a retract of $\mathrm{Tot}_k E_n^{\wedge \bul+1}$, the
	$k$th stage of the $E_n$-cobar construction \cite[\S 8]{Rav92}. Since
	$\bb{E_n}$ is $\sAbpn$-nilpotent (\Cref{prop:JW-theory}), for each positive
	integer $k$, the module spectrum $\bb{E_n^{\wedge k}}$ is
	$\sAbpn$-nilpotent. Taking finite homotopy limits, we see that the
	Borel-equivariant theories $\bb{\mathrm{Tot}_k E_n^{\wedge \bul+1}}$
	associated to the finite stages of the $E_n$-cobar construction are
	$\sAbpn$-nilpotent. Finally, since $\sAbpnNil$ is closed under retracts, we see that $\bb{L_n S^0}$ is $\sAbpn$-nilpotent.
	Conversely, since $\bb{E_n}$ is an $\bb{L_n S^0}$-module, the minimality claim follows from \Cref{prop:JW-theory}.
\end{proof}

\begin{lemma}\label{lem:thick-subcat}
	Suppose that $p$ is a prime and $X$ is a $p$-local finite spectrum of type zero, i.e., $H_*(X;\bQ)\neq 0$. Then $\bb{X\wedge M}$ is $\sF$-nilpotent if and only if $\bb{M_{(p)}}$ is $\sF$-nilpotent. 
\end{lemma}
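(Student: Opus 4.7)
The plan is to use the Hopkins–Smith thick subcategory theorem \cite[Thm.~7]{HoS98} for $p$-local finite spectra, together with the observation that $\sFNil \subset \SpG$ is a thick $\otimes$-ideal and the functor $\bb{-}$ is exact. A key preliminary point is that for a finite spectrum $Y$ (viewed as having trivial $G$-action) and any spectrum $N$, one has a natural equivalence $\bb{Y \wedge N} \simeq Y \wedge \bb{N}$ in $\SpG$; this follows from the finiteness of $Y$ together with the description $\bb{N} \simeq F(EG_+, i_* N)$, since for finite $Y$ the function spectrum commutes with smashing by $Y$.

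For the $(\Leftarrow)$ direction, assume $\bb{M_{(p)}}$ is $\sF$-nilpotent. Since $X$ is $p$-local, $X \wedge M \simeq X \wedge M_{(p)}$, and the preliminary observation gives $\bb{X \wedge M} \simeq X \wedge \bb{M_{(p)}}$. Since $\sFNil$ is closed under smashing with arbitrary $G$-spectra (in particular with the $G$-spectrum $X$ equipped with trivial action), we conclude $\bb{X \wedge M} \in \sFNil$.

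For the $(\Rightarrow)$ direction, define
\[ \mathcal{C} := \{ Y \in \Spe_{(p)}^{\mathrm{fin}} : \bb{Y \wedge M} \in \sFNil \}. \]
Since $\bb{-}$ is exact, any cofiber sequence $Y_1 \to Y_2 \to Y_3$ of finite $p$-local spectra yields a cofiber sequence $\bb{Y_1 \wedge M} \to \bb{Y_2 \wedge M} \to \bb{Y_3 \wedge M}$ in $\SpG$; thickness of $\sFNil$ then shows that $\mathcal{C}$ is closed under cofibers and retracts, i.e., $\mathcal{C}$ is a thick subcategory of $\Spe_{(p)}^{\mathrm{fin}}$. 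By hypothesis, $X \in \mathcal{C}$ and $X$ has type zero. By the Hopkins–Smith thick subcategory theorem, the only thick subcategory of $\Spe_{(p)}^{\mathrm{fin}}$ containing a type zero object is all of $\Spe_{(p)}^{\mathrm{fin}}$, so $S_{(p)} \in \mathcal{C}$. This gives $\bb{M_{(p)}} \simeq \bb{S_{(p)} \wedge M} \in \sFNil$, as desired.

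The only mildly subtle point is the identification $\bb{Y \wedge M} \simeq Y \wedge \bb{M}$ for finite $Y$, but this is standard; everything else is a formal application of thickness and the thick subcategory theorem.
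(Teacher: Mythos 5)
Your proof is correct and follows essentially the same route as the paper: the paper's two-line argument likewise rests on exactness and monoidality of the Borel functor together with the Hopkins--Smith thick subcategory theorem (cited at the start of the subsection), which it invokes implicitly when asserting that $\bb{X\wedge M}$ and $\bb{M_{(p)}}$ generate the same thick subcategory of $\SpG$. One cosmetic point: a $p$-local finite spectrum $X$ need not be compact in $\Spe$, so in the easy direction it is cleanest to first write $X\wedge M\simeq Y\wedge M_{(p)}$ for a genuinely finite $Y$ (up to retracts) before commuting the smash product past $F(EG_+,-)$; your argument in the hard direction, which absorbs the localization into $M$ before applying $\bb{(-)}$, already sidesteps this issue.
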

\begin{proof}
Note that the functor $X \mapsto \underline{X}$ 
preserves finite limits and colimits, so that $\underline{X \wedge M} \simeq
i_* X \wedge \bb{M}$ for a finite spectrum $X$. It thus follows that the thick
subcategories of $\SpG$ generated by $\underline{X \wedge M}$ and
$\underline{M_{(p)}}$ are equal, so their derived defect bases are equal.
\end{proof}
\begin{prop}\label{prop:KO-is-in-cnil}
	The derived defect base of $\bb{KO}$ is $\bb{\sC}$, while the derived defect base of $\bb{ko}$ is $\bb{\sC}\cup \sE$.
\end{prop}
\begin{proof}
	Both of these statements are consequences of Wood's theorem \cite[Thm.~3.2]{Mat13} which gives equivalences $ C\eta\wedge \mathit{KO} \simeq \mathit{KU}$ and $ C\eta \wedge\mathit{ko}\simeq \mathit{ku}$. Since $H_*(C\eta;\bQ)\neq 0$ we can apply \Cref{lem:thick-subcat} and \Cref{prop:kg-is-in-cnil,prop:ku-is-in-secnil} to see that $\bb{KO}$ is $\bb{\sC}$-nilpotent and $\bb{ko}$ is $\bb{\sC}\cup \sE$-nilpotent. 
	The minimality of these families follows from the minimality results in \Cref{prop:kg-is-in-cnil,prop:ku-is-in-secnil} for their respective module spectra $\bb{KU}$ and $\bb{ku}$.
\end{proof}

\begin{defn}\label{def:tmf}
	Let $\cO^{\mathrm{top}}$ be the Goerss-Hopkins-Miller sheaf of
	$E_\infty$-ring spectra on $\overline{\cM}_{ell}$, the compactified moduli
	stack of elliptic curves (cf.~\cite{Beh12b}). Let
	$\cM_{ell}\subset \overline{\cM}_{ell}$ denote the locus parametrizing smooth elliptic curves.
	\begin{itemize}
		\item Let $\mathit{TMF}:=\Gamma(\cM_{ell};\cO^{\mathrm{top}})$ denote the derived global sections of $\cO^{\mathrm{top}}$ over $\cM_{ell}$.
		\item Let $\mathit{Tmf}:=\Gamma(\overline{\cM}_{ell};\cO^{\mathrm{top}})$ denote the derived global sections of $\cO^{\mathrm{top}}$.
		\item Let $\mathit{tmf}$ denote the connective cover of $\mathit{Tmf}$.
	\end{itemize}
\end{defn}

Note that by construction, we have a sequence \[ \mathit{tmf}\rightarrow \mathit{Tmf}\rightarrow \mathit{TMF}\] of $E_\infty$-ring maps, where the first map is the connective covering and the second map is induced by the restriction map on structure sheaves.

\begin{prop}\label{prop:non-connective-variants-of-tmf}
	The family $\bb{\sAb^2}$ is the derived defect base of both $\bb{\mathit{Tmf}}$ and $\bb{\mathit{TMF}}$.
\end{prop}
\begin{proof}
	To show that the derived defect bases of $\bb{\mathit{Tmf}}$ and
	$\bb{\mathit{TMF}}$ must contain $\bb{\sAb^2}$, it suffices to show this is true for the $\bb{\mathit{Tmf}}$-module $\bb{\mathit{TMF}}$. We will do this by constructing, for every prime $p$, a map $\mathit{Tmf}\rightarrow \widehat{E}$ of ring spectra such that the derived defect base of $\bb{\widehat{E}}$ is $\sAb^2_{(p)}$. By varying $p$ we see that the derived defect base of $\bb{\mathit{Tmf}}$ must contain all of $\bb{\sAb^2}$.

Fix a supersingular elliptic curve $C$ over
$\overline{\mathbb{F}}_p$ (recall that the existence of such a curve for every $p$
is classical and follows from the Eichler-Deuring mass formula
\cite[Exer.~V.5.9]{Sil86}).  It determines a geometric point $x\colon
\Spec(\overline{\mathbb{F}}_p)\to\cM_{ell}$. 
Choose an affine \'etale neighborhood $\spec(R) \to \cM_{ell}$ of $x$. Note
that $R$ is finitely generated over $\mathbb{Z}$ (hence noetherian) and
torsion-free. 
Let $E$ denote the localization of   $\cO^{\mathrm{top}}( R)$ at the prime
ideal corresponding to the point $x$. 
There is a canonical map of $E_\infty$-rings $\mathit{TMF}\to E$. Furthermore,
$E$ is even periodic with trivial $\pi_1$, hence complex orientable, $\pi_0 E$
is a local ring, and the corresponding formal group $\mathbb{G}$ on $\pi_0
E\simeq\mathcal{O}_{\cM_{ell},x}$ is the base-change of the formal group of the
elliptic curve. In particular, the reduction of $\mathbb{G}$ modulo
the maximal ideal of $\pi_0 E$ is the formal group of $C$, hence of height $2$.
It now follows from \Cref{prop:JW-theory} applied to the completion \cite[\S
4]{DAGXII} $\widehat{E}$ of $E$ at
the maximal ideal that the derived defect base of $\bb{\widehat{E}}$ is $\sAbp^2$, as desired.

	
	We will now prove that $\bb{\mathit{Tmf}}$ is $\bb{\sAb^2}$-nilpotent; the
	claim for $\bb{\mathit{TMF}}$ will then follow by the module structure.
	By \Cref{lem:arith-fracture}, it
	suffices to show $\bb{\mathit{Tmf}_{(p)}}$ is $\sAbp^2$-nilpotent for each
	prime $p$ (dividing $|G|$). Since $\mathit{Tmf}_{(p)}$ is $L_2$-local
	\cite{Beh12b}, and hence an $L_2S^0$-module, the result now follows from \Cref{prop:ln-local}.
\end{proof}

\begin{prop}\label{prop:connective-tmf}
	The derived defect base of $\bb{\mathit{tmf}}$ is $\bb{\sAb^2}\cup \sE$.
\end{prop}
\begin{proof}
	For the minimality claim, we note that $H\bZ$ is a $\mathit{tmf}$-module and hence the derived defect base of $\bb{\mathit{tmf}}$ must contain $\sE$ by \Cref{prop:HZ-is-enil}. Since $\mathit{Tmf}$ is also a $\mathit{tmf}$-module,  the derived defect base of $\bb{\mathit{tmf}}$ must also contain $\bb{\sAb^2}$ by \Cref{prop:non-connective-variants-of-tmf}.

	To prove that $\bb{\mathit{tmf}}$ is $\bb{\sAb^2}\cup\sE$-nilpotent, we will
	use \Cref{lem:arith-fracture} and check this locally
	at every prime. 
	
	\begin{enumerate}
	\item  
At the prime 2 we recall that there is a finite $2$-local
	spectrum $DA(1)$ of type zero such that $DA(1)\wedge \mathit{tmf}_{(2)}\simeq
	BP\langle 2\rangle$ \cite[Thm.\ 5.8]{Mat13}. 
	It now follows from
	\Cref{prop:BPn,lem:thick-subcat} that $\bb{\mathit{tmf}_{(2)}}$ is
	$\sE_{(2)}\cup\sAb^2_{(2)}$-nilpotent.
	\item
  A similar argument at the prime $3$
	applies using a finite $3$-local complex $F$ of type zero such that $
	F\wedge\mathit{tmf}_{(3)}\simeq \mathit{tmf}_1(2)_{(3)}$ (cf.\ \cite[Thm.\ 4.13]{Mat13}).
	One now applies 
	\Cref{regseqarg}
	 to determine the derived
	defect base of $\underline{\mathit{tmf}_1(2)_{(3)}}$ (whose homotopy
	groups are polynomial on classes $a_2, a_4$ in $\pi_4, \pi_8$) and hence that of
	$\underline{\mathit{tmf}_{(3)}}$.
	Note that the finite localization of $\mathit{tmf}_1(2)_{(3)}$ away from the
	ideal $(a_2, a_4)$ is $Tmf_1(2)_{(3)}$  because
	the compactified moduli stack $(M_{\overline{ell}, 1}(2))_{(3)}$ is $(\mathrm{Spec}
	\mathbb{Z}_{(3)}[a_2, a_4] \setminus V(a_2, a_4))/\mathbb{G}_m$.  This is
	in particular $L_2$-local by construction, so that 
the Borel-equivariant theory $\underline{Tmf_1(2)_{(3)}}$ is $\bb{\sAb^2}$-nilpotent 
as before. 
\item 
	At $p \geq 5$, one applies \Cref{regseqarg} directly to
	$\mathit{tmf}_{(p)}$, which is now complex orientable and whose homotopy
	groups are $\mathbb{Z}_{(p)}[c_4, c_6]$.
	Similarly, the finite localization away from the ideal generated by $(c_4,
	c_6)$ is $Tmf_{(p)}$ and is therefore $L_2$-local by construction.
	Therefore, one can conclude as before.  
	\end{enumerate}
	
\end{proof}

\subsection{Additional bordism theories}

Finally, we determine the derived defect bases for the 
Borel-equivariant forms of a few additional bordism theories. 
\begin{prop}\label{prop:mso-nilpotence}
	The derived defect base of $\bb{MSO}$ is $\sE_{(2)}\cup \bb{\sAb}[1/2]$.
\end{prop}
\begin{proof}
	By \Cref{lem:arith-fracture}, it suffices to show that the derived defect bases of $\bb{MSO_{(2)}}$ and
	$\bb{MSO[1/2]}$  are $\sE_{(2)}$ and $\bb{\sAb}[1/2]$, respectively. 

	Using a result of Wall \cite[Thm.~5]{Wal60}, $MSO_{(2)}$ admits an $H\bZ_{(2)}$-module structure \cite[p.~209]{Sto68} and hence $\bb{MSO_{(2)}}$ is $\sE_{(2)}$-nilpotent by \Cref{prop:HZ-is-enil}. This family is minimal since $H\bZ_{(2)}$ is an $MSO_{(2)}$-algebra via the zeroth Postnikov section. 

	It is well known that the evident composite 
	\begin{equation}\label{eq:MU-splits}
	MSp\rightarrow MU\rightarrow MSO
	\end{equation} 
	of ring maps induces an isomorphism in $\bZ[1/2]$-homology. Since these
	spectra are connective the composite in \eqref{eq:MU-splits} is a homotopy
	equivalence after inverting 2.
	It follows that the
	derived defect base of $\bb{MU[1/2]}$ is bounded above by the derived defect
	base for $\bb{MSp[1/2]}$ and bounded below by the derived defect base for
	$\bb{MSO[1/2]}$ and that all of these derived defect bases are equal.  Now,
	by \Cref{prop:borel-sphere,prop:mu-is-anil}, each of these derived defect bases is $\bb{\sAll}[1/2]\cap \bb{\sAb}=\bb{\sAb}[1/2]$.
\end{proof}
As shown in the course of the previous proof we have:
\begin{cor}\label{cor:msp-nilpotence}
	The derived defect base of $\bb{MSp[1/2]}$ is $\bb{\sAb}[1/2]$.
\end{cor}
	In general, the map of ring spectra $MSp\rightarrow MU$ induced by forgetting structure and \Cref{prop:mu-is-anil} show that any defect base of $\bb{MSp}$ must contain $\bb{\sAb}$.

We now consider the family of Borel equivariant bordism theories associated to $MU\langle n\rangle$ and $MO\langle n\rangle$ for $n\geq 0$. These commutative ring spectra are constructed by applying the generalized Thom construction to the $n-1$st connective covers of $BU\times \bZ$ and $BO\times \bZ$ respectively.  

By construction, there are maps of ring spectra $MU\langle n\rangle \to MU\langle n-1\rangle$ and $MO\langle n\rangle \to MO\langle n-1\rangle$ for $n\geq 1$. So the following proposition gives lower bounds on the derived bases of the associated Borel equivariant theories:
\begin{prop}\label{prop:MUP}
	The derived defect base of $\underline{MU\langle 0\rangle}$ is $\bb{\sAb}$. The derived defect base of $\underline{MO\langle0\rangle}$ is $\sE_{(2)}$. 
\end{prop}
\begin{proof}
	Since $MU\langle 0\rangle$ is complex oriented, $\underline{MU\langle 0\rangle}$ is $\bb{\sAb}$-nilpotent. As an $MU$-module, there is a well-known splitting $MU\langle 0\rangle\simeq \bigvee_{n\in \bZ} \Sigma^{2n} MU$. So the derived defect base of $\underline{MU\langle 0\rangle}$ must contain $\bb{\sAb}$ as well.

	The argument for $MO\langle 0\rangle$ is deduced similarly from \Cref{cor:mo-is-e2-nil}.
\end{proof}

\begin{prop}\label{prop:MUn}
	For any $n\geq 0$ the derived defect base of $\bb{MU\langle n\rangle}$ is $\bb{\sAb}$. 
\end{prop}
\begin{proof}
	By \Cref{lem:arith-fracture} and \Cref{prop:MUP}, it suffices to show, for each prime $p$, $\bb{MU\langle n\rangle_{(p)}}$ is $\sAb_{(p)}$-nilpotent. Using the results of  \cite[\S 5]{HoR95} and the notation therein, there is a map of ring spectra $MBP(r, tq)\to MU\langle n\rangle_{(p)}$. So it suffices to show $\bb{MBP(r,tq)}$ is $\sAb_{(p)}$-nilpotent. 

	The ring spectrum $MBP(r,tq)$ has the property that there exists a finite
	type 0 complex $X$ and a splitting of $MBP(r,tq)\wedge X$ into a wedge of
	suspensions of $BP$ by \cite[Cor.~5.2]{HoR95}.
We note that the existence of desired finite complexes follows from the work
of Smith \cite[Thm.~1.5]{Smith}; note that Smith's construction produces
finite even complexes.	
	The claim now follows from \Cref{lem:thick-subcat}.
\end{proof}

\begin{prop}\label{prop:MOn}
	For any $n\geq 0$, $\bb{MO\langle n\rangle}$ is $\bb{\sAb}$-nilpotent and the derived defect base of $\bb{MO\langle n\rangle [1/2]}$ is $\bb{\sAb}[1/2]$ for $n\geq 2$.
\end{prop}
\begin{proof}
	Using the orientations $MU\langle n\rangle \to MO\langle n\rangle$ we see the first claim is a corollary of \Cref{prop:MUn}.  For $n\geq 2$ the derived defect base of $\bb{MO\langle n\rangle[1/2]}$ contains that of $\bb{MSO[1/2]}=\bb{MO\langle 2\rangle}$. So the second claim follows from the first and \Cref{prop:mso-nilpotence}.
\end{proof}

We can obtain a sharp lower bound on the derived defect bases of $\bb{MO\langle n\rangle}$ for small $n$.
\begin{prop}\label{prop:spin-bordism}
	The derived defect base of $\MSpin=\bb{MO\langle 4\rangle}$ is $\sC_{(2)}\cup \sE_{(2)}\cup \bb{\sAb}[1/2]$.
\end{prop}
\begin{proof}
By \Cref{lem:arith-fracture} and \Cref{prop:MOn}, it suffices to show that the derived defect base of $\bb{\mathit{MSpin}_{(2)}}$ is $\sC_{(2)}\cup \sE_{(2)}$.


	Now $2$-locally $\mathit{MSpin}$ additively splits as a wedge of $\mathit{ko}_{(2)}$-modules \cite{ABP67} and $\mathit{ko}_{(2)}$ is an $\mathit{MSpin}_{(2)}$-module via the Atiyah-Bott-Shapiro orientation. It follows that the derived defect base of $\bb{\mathit{MSpin}_{(2)}}$ is equal to the derived defect base of $\bb{ko_{(2)}}$ which is $\sC_{(2)}\cup \sE_{(2)}$ by \Cref{prop:KO-is-in-cnil,prop:borel-sphere}.
\end{proof}

\begin{prop}\label{prop:string-bordism}
	The derived defect base $\sF$ of $\bb{\mathit{MString}}$ satisfies $\bb{\sAb}\supseteq \sF\supseteq \bb{\sAb}[1/2]\cup\sE_{(2)}\cup\sAb^2_{(2)}$.
\end{prop}
\begin{proof}
	The first containment is a special case of \Cref{prop:MOn}. The second containment follows from the $\mathit{String}$-orientation on $\mathit{tmf}$ \cite{AHR06} and \Cref{prop:connective-tmf}.
\end{proof}


	It is an open problem to determine if there is an analogue of the Anderson-Brown-Peterson splitting of $\mathit{MSpin}_{(2)}$ for $\mathit{MString}_{(2)}$. 
	If $\mathit{MString}_{(2)}$ split additively into a wedge of $\mathit{tmf}_{(2)}$-modules then our methods would show that the derived defect base of $\MString$ is $\bb{\sAb}[1/2]\cup\sE_{(2)}\cup\sAb^2_{(2)}$.

\appendix
\section{A toolbox for calculations}
Below we provide a few technical results for working with $\sF$-homotopy (co)limit spectral sequences.

\subsection{The classifying space $E\sF$}\label{sec:esf}
We will now verify some claims about the classifying space $E\sF$ which were used in the body of the paper. 

Let $i\colon \sOGF\rightarrow \Top_G$ be the inclusion of the full subcategory
spanned by the transitive $G$-sets with isotropy in $\sF$. We have defined $E\sF$ to be $\hocolim_{\sOGF} i$. We can model this $G$-space by the standard two-sided bar construction \cite[\S XII.5]{BoK72}\cite[\S V.2]{May96}:
\begin{equation}
	E\sF:=\hocolim_{\sOGF} i\simeq |B_\bul(*,\sOGF, i)|,
\end{equation}
where $B_\bul(*,\sOGF, i)$ is the simplicial $G$-space which in degree $n$ is 
\[ \coprod_{(G/H_0,\dots,G/H_n)\in \sOGF^{\times n+1}} *\times \sOGF(G/H_n,G/H_{n-1})\times \cdots \times \sOGF(G/H_1, G/H_0)\times i(G/H_0). \] 
The zeroth face map is the projection which sends $\sOGF(G/H_n, G/H_{n-1})$ to a point and is the identity on the other factors. Using the functoriality of $i$ we obtain a map \[\sOGF(G/H_1, G/H_0)\times i(G/H_0)\rightarrow i(G/H_1).\] The last face map is the product of this with map with the identity on the remaining factors. The remaining face maps come from the composition in $\sOGF$ and the degeneracies come from including identities into the hom-factors. 

\begin{prop}\label{prop:sf-universality}
	The $G$-space $E\sF$ has the following properties:
	\begin{enumerate}
	\item \label{en:esf-univ-1} The fixed points of $E\sF$ have the following homotopy types: \[	E\sF^K \simeq \begin{cases}
		* & \mbox{if } K\in \sF\\
		\emptyset & \mbox{otherwise.}
		\end{cases}
		\]
	\item \label{en:esf-univ-2} Let $\Top_\sF\subseteq \Top_G$ denote the full
	subcategory spanned by those $G$-spaces which admit a $G$-CW structure with
	cells having isotropy only in $\sF$. Then $E\sF$ is a homotopically terminal object in $\Top_\sF$. 
	\item \label{en:esf-univ-3} The $G$-space $E\sF$ is determined up to equivalence by Condition \eqref{en:esf-univ-1}.
	\end{enumerate}
\end{prop}

\begin{proof} 
We only give the proof of the first assertion; for the others, see for instance
\cite[Sec. 1.2]{Luck05}. 
Let $K \leq G$ be such that $K \notin \sF$. Since $K$-fixed points commute with
homotopy colimits, it follows easily that $(E\sF)^K = \emptyset$. 
Suppose now $K \in \sF$; then we have
\[ \hocolim_{G/H \in \mathcal{O}_{\sF}(G)} (G/H)^K = 
 \hocolim_{G/H \in \mathcal{O}_{\sF}(G)} \Hom_{\Top_G}( G/K, G/H ) \simeq
\ast
\]
because the homotopy colimit of a corepresentable functor is contractible. 
\end{proof} 

\subsection{Cofinality results}
The following cofinality results aid in the calculation of $\sF$-homotopy (co)limit spectral sequences.

\begin{lemma}\label{lem:normal-subgroups}
	Let $N$ be a normal subgroup of $G$. If $\sF$ is the family of all subgroups of $N$ then the inclusion $i\colon BG/N^{\op}\rightarrow \sOGF$ is homotopy cofinal.
	In particular, the derived functors of colimits and limits over $\sOGF$ for
	$\mathcal{F}$ the family of subgroups contained in $N$ are identified with
	group (co)homology for $G/N$.
\end{lemma}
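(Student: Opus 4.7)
The plan is to verify the hypothesis of Quillen's Theorem A: for every $G/H \in \sOGF$ (equivalently, every $H \leq N$), the comma category of pairs $(c \in B G/N^{\op}, f\colon G/H \to i(c))$ has weakly contractible classifying space. Because $B(G/N)^{\op}$ has a single object $c$ with $i(c) = G/N$, this is the same comma category that verifies left cofinality of $i^{\op}\colon B G/N \to \sOGF^{\op}$, so the single computation upgrades to equivalences of both derived colimits over $\sOGF$ and derived limits over $\sOGF^{\op}$.

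The computation proceeds by unwinding the definitions inside $\sOG$. Since $N$ is normal in $G$, for every coset $gN$ the conjugate $g^{-1}Hg$ lies in $N$, so every element of $G/N$ defines a $G$-map $f_{gN}\colon G/H \to G/N$, $xH \mapsto xgN$; these give all objects of the comma category. A morphism from $f_{gN}$ to $f_{g'N}$ is an element $hN$ of $B(G/N)^{\op}$ with $i(hN) \circ f_{gN} = f_{g'N}$, and since composition of orbit maps gives $f_{hN} \circ f_{gN} = f_{ghN}$, the equation $f_{ghN} = f_{g'N}$ has the unique solution $hN = g^{-1}g' N$. Hence the comma category is a nonempty groupoid with a unique morphism between any two objects, and its nerve is therefore contractible.

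The only subtlety, rather than a genuine obstacle, is bookkeeping with the opposite group: the identity $f_{aN} \circ f_{bN} = f_{baN}$ identifies $\mathrm{End}_{\sOG}(G/N)$ with $(G/N)^{\op}$, which is the reason for the ``$\op$'' in the source of $i$. Once cofinality is established, a covariant coefficient system $M\colon \sOGF \to \mathrm{Ab}$ restricts via $i$ to a right $G/N$-module structure on $M(G/N)$, and its derived colimit over $\sOGF$ agrees with the derived colimit over $B(G/N)^{\op}$, namely the group homology $H_*(G/N; M(G/N))$. Dually, a contravariant coefficient system restricts along $i^{\op}\colon B G/N \to \sOGF^{\op}$ to a left $G/N$-module and its derived limit over $\sOGF^{\op}$ becomes the group cohomology $H^*(G/N; M(G/N))$, as claimed.
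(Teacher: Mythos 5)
Your argument is correct and complete. The paper itself disposes of this lemma by citing a more general cofinality result from the companion paper ([MNNa, Prop. on cofinal collections]), so it gives no direct argument here; what you have written is precisely the hands-on verification that the authors outsourced. Your key computation is right: for $H\leq N$ one has $\sOG(G/H,G/N)\cong (G/N)^H\cong G/N$ because $N$ (hence $H$) acts trivially on $G/N$, postcomposition by $\mathrm{End}_{\sOG}(G/N)\cong (G/N)^{\op}$ is free and transitive on this set, so each comma category $G/H\downarrow i$ is a nonempty groupoid with a unique morphism between any two objects and hence has contractible nerve; Quillen's Theorem A (or \cite[Thm.~4.1.3.1]{Lur09}) then gives homotopy cofinality, and the self-duality of the comma-category condition handles limits over $\sOGF^{\op}$ simultaneously. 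You were also right to flag the $\op$ bookkeeping, since $f_{aN}\circ f_{bN}=f_{baN}$ is exactly why the source of $i$ is $B(G/N)^{\op}$ rather than $B(G/N)$. The only difference from the paper is one of economy: the citation buys the authors a uniform treatment of several cofinality statements at once, whereas your direct computation is self-contained and makes the role of normality (needed both for $(G/N)^H=G/N$ and for the maps $f_{gN}$ to be well defined) completely explicit.
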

\begin{proof} 
This is a special case of \cite[Prop.~\ref{S-furthercofinalcollection}]{MNNa}.
\end{proof} 
\begin{prop}\label{prop:pushouts}
  Let $p$ and $q$ be two distinct primes and $\sF_1$ (resp.~$\sF_2$) the family
  of $p$-subgroups (resp.~$q$-subgroups) of the finite group $G$. Then the
  following commutative square of categories
  \begin{equation}\label{eq:pushout-of-cats} 
 \xymatrix{
BG\ar[d] \ar[r] &  \mathcal{O}_{\sF_1}(G)^\op  \ar[d]  \\
  \mathcal{O}_{\sF_2}(G)^\op  \ar[r] &  \mathcal{O}_{\sF_1\cup \sF_2}(G)^\op 
 }
 \end{equation} 
 induces a pushout of simplicial sets upon applying 
the nerve functor. \end{prop}
\begin{proof}
It suffices to prove the statement above for the opposite categories. That is,
we show \[N\sO_{\sF_1\cup \sF_2}(G)\cong N\sO_{\sF_1}(G)\cup_{NBG^\op}
N\sO_{\sF_2}(G)\] is the pushout of the nerves. Note that the pushout
simplicial set $N\sO_{\sF_1}(G)\cup_{NBG^\op} N\sO_{\sF_2}(G)$ is just a set-theoretic union in each degree. 

	So we need to show that  any $n$-simplex in the nerve of $\sO_{\sF_1\cup
	\sF_2}(G)$ lies entirely in the nerve of
	$\sO_{\sF_1}(G)$ or entirely in the nerve of
	$\sO_{\sF_2}(G)$; and if the $n$-simplex lies in both, then it must lie
	entirely in their intersection $NBG^\op$. When $n$ is $0$, the $n$-simplices
	of a nerve correspond to the objects of the category and this claim is
	obvious. When $n$ is positive the $n$-simplices correspond to chains of
	morphisms
	of length $(n-1)$. 
	
	Now suppose that $H_p$ is a $p$-subgroup of $G$ and $H_q$ is a $q$-subgroup of $G$, then
	 \[\sO_{\sF_1\cup \sF_2}(G)(G/H_p,G/H_q)\neq \emptyset \iff H_p \textrm{ is subconjugate to }H_q\iff H_p=e.\] 
	 This argument is symmetric in $p$ and $q$, so any chain of morphisms in
	 $\sO_{\sF_1\cup \sF_2}(G)$ is either in the image of $\sO_{\sF_1}(G)$ or in
	 the image of $\sO_{\sF_2}(G)$ under the embeddings in \eqref{eq:pushout-of-cats}. If the chain of morphisms is in both categories, then it is a sequence of endomorphisms of $G/e$ as desired. 
\end{proof}

\begin{prop}\label{prop:pullbacks}
  Let $p$ and $q$ be two distinct primes and $\sF_1$ (resp.~$\sF_2$) the family
  of $p$-subgroups (resp.~$q$-subgroups) of the finite group $G$. Let $\cC$ be a
  complete $\infty$-category. Then for any functor \[F\colon \sO_{\sF_1\cup
  \sF_2}(G)^\op\rightarrow \sC\] the decomposition in \Cref{prop:pushouts} induces a homotopy pullback diagram in $\sC$:
  \[ 
  \xymatrix{
  \holim_{\sO_{\sF_1\cup \sF_2}(G)^\op} F \ar[d]  \ar[r] &
  \holim_{\sO_{\sF_1}(G)^\op} \left. F\right|_{\sO_{\sF_1}(G)^\op} \ar[d]  \\
  \holim_{\sO_{\sF_2}(G)^\op} \left. F\right|_{\sO_{\sF_2}(G)^\op} \ar[r] &  \holim_{BG}\left. F\right|_{BG}   
  }
  \]
\end{prop}
\begin{proof}
	Applying the nerve functor to the pushout diagram from \Cref{prop:pushouts}
	we obtain a pushout diagram of $\infty$-categories where, since the
	inclusions are fully faithful, each map is a monomorphism. The claim now
	follows from \cite[Prop.~4.4.2.2]{Lur09}, after taking opposite
	$\infty$-categories.
\end{proof}


\section{A sample calculation in equivariant $K$-theory}
In this section we analyze the $\sC$-homotopy limit spectral sequence
converging to $\pi_*^G \KG$ when $G=C_2\times C_2$ is the Klein 4-group and
$\sC=\sP$ is the family of all cyclic subgroups of $G$. Of course we know the
target groups of this spectral sequence and we will use this knowledge below.
Nonetheless, this calculation does
illustrate some standard techniques for calculating derived functors and for
evaluating differentials in these spectral sequences. Moreover, this determines
the `stable' portion of the $\sC$-homotopy limit spectral sequence converging to
the homotopy groups of the Picard spectrum of the category of $G$-equivariant
$\KG$-modules (cf.\ \cite{MaS}). We hope to return to this topic later.

Even this most elementary case is still nontrivial. We will leave minor details to the reader.

We first fix some notation for the various subgroups and quotient groups:
\begin{align*}
	H_1 &= C_2\times e <G &\  F_1=G/H_1\\
	H_2 &= e\times C_2 <G &\  F_2=G/H_2\\
	H_3 &= \Delta(C_2) <G &\ F_3=G/H_3
\end{align*}
The quotient maps induce ring homomorphisms $R(F_i)\rightarrow R(G)$ such that the induced map \[R(F_1)\otimes R(F_2)\rightarrow R(G)\] is an isomorphism. 
Let $\sigma_i$ denote both the complex sign representation of $F_i\cong C_2$ and the representation of $G$ obtained by pulling back along the quotient map. 

The $\sC$-homotopy limit spectral sequence for $\KG$ takes the following form:
 \[\sideset{}{^s}\lim_{\sOGC^\op}\pi_{t}^{(-)} \KG\Longrightarrow \pi_{t-s}^G \KG\]
 The abutment is \[R(G)[\beta^{\pm
 1}]=\bZ\{1,\sigma_1,\sigma_2,\sigma_3=\sigma_1\sigma_2\}[\beta^{\pm 1}]\]
 where $\beta$ is the Bott periodicity generator in degree 2 and $R(G)$ is the
 complex representation ring in degree 0. Since $\pi_*^{(-)}\KG$ is 2-periodic
 with respect to this generator, the $E_2$-page is 2-periodic as well.

The map sending a virtual representation to its virtual dimension defines a map
$R(-)\rightarrow \underline{\bZ}$ of Green functors with kernel the
augmentation ideal functor $I(-)$. Although this map does not split as
Mackey functors, it does split as coefficient systems. From this splitting we obtain:
\begin{prop}\label{prop:klein}
	The $E_2$-term of the $\sC$-homotopy limit spectral sequence has the following form:
\[ \sideset{}{^*}\lim_{\sOGC^\op}\pi_*^{(-)}\KG\cong \sideset{}{^*}\lim_{\sOGC^\op}(\underline{\bZ})[\beta^{\pm}]\oplus\sideset{}{^*}\lim_{\sOGC^\op}(I(-))[\beta^{\pm}] \]
\end{prop}

To calculate these summands  we will use the identification, for coefficient
systems $M$, \[\sideset{}{^*}\lim_{\sOGF^\op} (M)\cong \Ext_{\ZOGF}^*(\underline{\bZ},M)\] of \Cref{sec:bredon}. One could calculate this directly from the definition by taking a projective resolution of $\underline{\bZ}$ in coefficient systems. We will instead use a less direct method that can be be applied to a wider class of problems. 

We will perform the analogous calculation for various subfamilies $\sF\subseteq \sC$ of subgroups, starting with the trivial family and gradually working our way up. For such a family let $\bZ[\sF]$ be the coefficient system obtained by restricting $\underline{\bZ}$ to $\sOGF^\op$ and then left Kan extending to a functor on $\sOGC^\op$. We then define the coefficient system $\bZ[\tilde{\sF}]$ by the following short exact sequence:
\[ 0\rightarrow \bZ[\sF]\xrightarrow{i} \bZ[\sC]\xrightarrow{\pi} \bZ[\tilde{\sF}]\rightarrow 0\] where $i$ is the counit of the left Kan extension/restriction adjunction.

From this short exact sequence we obtain the following long exact sequence of $\Ext$-groups:
\begin{equation}\label{eq:les}
 \cdots \Ext_{\ZOGC}^s(\bZ[\sF], M) \xleftarrow{i^*} \Ext_{\ZOGC}^s(\bZ[\sC], M) \xleftarrow{\pi^*} \Ext_{\ZOGC}^s(\bZ[\tilde{\sF}], M) \xleftarrow{\partial} \Ext_{\ZOGC}^{s-1}(\bZ[\sF], M)\cdots
\end{equation}
Just as in the proof of  \Cref{prop:e2-identification} we have an adjunction isomorphism 
\[\Ext_{\ZOGC}^s(\bZ[\sF], M)\cong \Ext_{\ZOGF}^s(\underline{\bZ}, M).\] We will use this isomorphism and the long exact sequence of \eqref{eq:les} repeatedly to calculate the $E_2$-term in \Cref{prop:klein} by gradually increasing the size of the family under consideration.

\subsection{The trivial family of subgroups}\label{sec:trivial-family}
We begin by considering the trivial family of subgroups. In this case $\sOGF^\op$ is the category with one object $G/e$ and whose morphisms are the elements of $G$. The composition law is obtained from the group multiplication and a projective resolution of $\underline{\bZ}$ in $\ZOGF$ is just a projective resolution of the trivial module $\bZ$ in $\bZ[G]$-modules. Under this identification the free module $\bZ[G]$ corresponds to the restriction of the projective functor $\bZ\{\sOG(-, G/e)\}$ to the trivial family. This leads easily to the following identification (when $\sF=\sTriv=\{e\}$)
\[ \Ext_{\ZOGF}^s(\underline{\bZ}, M)\cong H^s(G;M(G/e)). \]
In the case $M=I$, $I(G/e)=0$ so these groups vanish. To simplify the notation we will write $I(H):=I(G/H)$ below. When $M=\underline{\bZ}$ this is just the integral cohomology of $G$, which we will denote by $H^*(G;\bZ)$ throughout this section.

We will now recall the well-known calculation of $H^*(G;\mathbb{Z})$ in order to fix notation and to relate it to the cohomology of the subgroups $H_i$ and the quotient groups $F_i$. We will use the Bockstein spectral sequence from the cohomology with $\mathbb{F}_2$-coefficients. Recall that $H^*(F_i;\bF_2)$ is a polynomial algebra on a generator $x_i$ in degree 1. This element supports a nontrivial Bockstein $\beta x_i=\mathrm{Sq}^1 x_i=x_i^2$. By the K\"unneth theorem the quotient maps induce an isomorphism \[H^*(F_1;\bF_2)\otimes H^*(F_2;\bF_2)\cong \bF_2[x_1, x_2] \cong H^*(C_2\times C_2;\bF_2).\]
The Bockstein spectral sequence collapses at $E_2$. There is only simple $2$-torsion and no exotic multiplicative extensions:
\[ H^*(C_2\times C_2;\bZ)\cong \bZ[y_1, y_2, z]/(2y_1, 2y_2, 2z, z^2-y_1y_2^2-y_1^2y_2).\] Here $y_i=\beta x_i$ is in degree 2 and $z=\beta(x_1 x_2)$ is in degree 3. 

\subsection{The nearly trivial family of subgroups}
We now consider the case $\sF=\{e,H_i\}$. By the Yoneda lemma, we can identify a map of coefficient systems \[f\colon \bZ\{\sOGF(-,G/H)\}\rightarrow M\] with an element $f\in M(G/H)$. In particular we obtain an augmentation map
\[\varepsilon\colon \bZ\{\sOGF(-,G/H)\}\rightarrow \underline{\bZ}\]
corresponding to the unit. Similarly, for every element in the Weyl group $g\in N_G H/H=\sOG(G/H,G/H)$ we obtain a map
\[g\colon \bZ\{\sOGF(-,G/H)\}\rightarrow \bZ\{\sOGF(-,G/H)\}\]
\begin{lemma}\label{lem:short-res}
	Let $\underline{\bZ}$ denote the constant $G=C_2\times C_2$-Green functor at the integers restricted to the family $\sF=\{e, H_i\}$. Let $g$ be a generator of the quotient group $F_i= N_GH_i\cong C_2$. Then the following sequence of functors is exact:
	\[ 0\rightarrow \underline{\bZ}\rightarrow \bZ\{\sOGF(-, F_i)\} \xrightarrow{e+g}\bZ\{\sOGF(-,F_i)\}\xrightarrow{e-g}\bZ\{\sOGF(-,F_i)\}\xrightarrow{\varepsilon}\underline{\bZ}\rightarrow 0.\]
\end{lemma}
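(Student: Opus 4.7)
The plan is straightforward: since exactness of a sequence of $\mathbb{Z}\mathcal{O}(G)_{\sF}$-valued functors can be checked objectwise, and $\sOGF$ has only the two objects $G/e$ and $G/H_i$, I would verify exactness separately at each of these two objects. Writing $P = \mathbb{Z}\{\sOGF(-,F_i)\}$ for brevity, the claim will reduce in both cases to the standard $2$-periodic projective resolution of the trivial module over $\mathbb{Z}[C_2]$.

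First I would unwind the values of $P$ using the Yoneda lemma and the fact that $H_i \trianglelefteq G$, so that $N_G H_i / H_i = F_i$. Evaluating at $G/H_i$ gives $P(G/H_i) = \mathbb{Z}[\sOGF(G/H_i, G/H_i)] = \mathbb{Z}[F_i]$ as a regular $F_i$-module; evaluating at $G/e$ gives $P(G/e) = \mathbb{Z}[\sOGF(G/e, G/H_i)] = \mathbb{Z}[G/H_i]$, and since $H_i$ acts trivially on $G/H_i$ this $G$-module coincides with $\mathbb{Z}[F_i]$ viewed through the quotient $G \twoheadrightarrow F_i$. In either case the action of the generator $g$, induced by post-composition with $g \in \sOGF(G/H_i, G/H_i)$, is the regular translation action on $\mathbb{Z}[F_i]$. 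Similarly, $\underline{\mathbb{Z}}$ takes value $\mathbb{Z}$ (with the only possible action) at both objects, the augmentation $\varepsilon$ is the standard one $\mathbb{Z}[F_i] \to \mathbb{Z}$, and the first map $\underline{\mathbb{Z}} \hookrightarrow P$ sends $1$ to the norm element $e+g$.

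At each object the sequence therefore becomes
\[ 0 \to \mathbb{Z} \xrightarrow{1 \mapsto e+g} \mathbb{Z}[F_i] \xrightarrow{e+g} \mathbb{Z}[F_i] \xrightarrow{e-g} \mathbb{Z}[F_i] \xrightarrow{\varepsilon} \mathbb{Z} \to 0, \]
which is the classical $2$-periodic resolution exhibiting $H^*(C_2;\mathbb{Z})$; its exactness is elementary and well known. Since exactness of functors in $\mathbb{Z}\mathcal{O}(G)_{\sF}$ is objectwise, this completes the verification.

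The only potential subtlety is the identification of the leftmost map $\underline{\mathbb{Z}} \to P$ as multiplication by the norm $e+g$; I would justify this by observing that the unit of the restriction/Kan extension adjunction (equivalently, the map corresponding under Yoneda to $1 \in \underline{\mathbb{Z}}(G/H_i)$ composed with the natural transformation $P \to \underline{\mathbb{Z}}$ given by $\varepsilon$) has image in $P(G/H_i) = \mathbb{Z}[F_i]$ equal to the sum over the orbit, i.e.\ $e+g$, and likewise at $G/e$. No other step is delicate.
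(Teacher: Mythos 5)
Your overall strategy coincides with the paper's: exactness in $\bZ\sOGF$ is checked objectwise, the family has only the two objects $G/e$ and $G/H_i$, and at each object one identifies the sequence with (a truncation of) the standard $2$-periodic resolution of $\bZ$ over $\bZ[F_i]$. The identifications $P(G/H_i)\cong\bZ[F_i]$ and $P(G/e)\cong\bZ[G/H_i]\cong\bZ[F_i]$, with $g$ acting by translation in both cases, are correct, as is the observation that exactness may be verified after evaluation at these two objects.

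The gap is precisely at the step you single out as the ``only potential subtlety.'' If the leftmost map sends $1$ to the norm element $e+g$, the displayed sequence is not exact at the first copy of $\bZ[F_i]$ --- it is not even a complex, since $(e+g)\cdot(e+g)=2(e+g)\neq 0$. In $\bZ[C_2]$ the kernel of multiplication by the norm $e+g$ is $\bZ\{e-g\}$, on which the generator acts by $-1$; the invariant line $\bZ\{e+g\}$ is instead the kernel of $e-g$, which is the \emph{next} map in the periodic resolution. So for the four-term sequence ending $\xrightarrow{e+g}P\xrightarrow{e-g}P\xrightarrow{\varepsilon}\underline{\bZ}$ to be exact, the first map must send $1\mapsto e-g$, and the kernel term is then the constant system twisted by the sign action of the Weyl group $F_i$. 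Your own naturality computation (correctly) shows that any map out of the honest constant functor $\underline{\bZ}$ must land in the invariants $\bZ\{e+g\}$; combined with the above, this shows such a map can never have image $\ker(e+g)$, so the argument as written cannot be repaired by a different choice of first map from $\underline{\bZ}$. (This reflects a genuine imprecision in the lemma's statement, which the paper's proof also elides; it is harmless for \Cref{cor:nearly-triv}, since that only uses the spliced $2$-periodic resolution $\cdots\xrightarrow{e+g}P\xrightarrow{e-g}P\xrightarrow{\varepsilon}\underline{\bZ}\to 0$, whose exactness does hold. But your write-up asserts exactness of a specific sequence that is false, so the verification does not go through as stated.)
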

Before proceeding to the proof, we note that we can concatenate these exact sequences together to obtain a projective resolution of $\bb{\mathbb{Z}}$. This immediately yields:
\begin{cor}\label{cor:nearly-triv}
	For the family $\sF=\{e,H_i\}$, we have the following identification: \[\Ext_{\ZOGF}^*(\underline{\bZ}, M)\cong H^*(F_i;M(G/H_i)).\]
\end{cor}

\begin{proof}[Proof of \Cref{lem:short-res}]
	Although this is a special case of \Cref{lem:normal-subgroups}, we include an alternative, and perhaps more explicit, argument. 

	Since kernels and cokernels in $\ZOGF$ are calculated object-wise, the exactness of a sequence of natural transformations is equivalent to the exactness of the sequence of maps obtained by  evaluating at $G/H_i=F_i$ and $G/e$. In both cases we obtain the beginning of the standard 2-periodic $\bZ[F_i]$-resolution of the trivial module $\bZ$. 
\end{proof}

We will now calculate the terms in \Cref{cor:nearly-triv} when $M$ is either of the summands $\underline{\bZ}$ or $I(-)$ of $R(-)$. From the discussion in \Cref{sec:trivial-family} we know that $H^*(F_i;\bZ)\cong \bZ[y_i]/(2y_i)$. The action of $F_i$ on $I(H_i)=\bZ\{1-\overline{\sigma}_i\}$ is via the conjugation action on $H_i$ which is trivial since $G$ is abelian. Regarding $H^*(F_i;I(H_i))$ as a module over $H^*(F_i,\bZ)$, we obtain: \[ H^*(F_i;I(H_i))\cong \bZ[y_i]/(2y_i)\otimes (1-\overline{\sigma}_i).\] Here $\overline{\sigma}_i$ is the sign representation of $H_i$, not $F_i=G/H_i$, so $\sigma_j$ restricts to $\overline{\sigma}_i$ if and only if $j$ is \emph{not} $i$.

Note that the relations $(1-\overline{\sigma}_i)^2=2(1-\overline{\sigma}_i)$ and $2y_i=0$ force all products of positive degree elements in $H^*(F_i;I(H_i))$ to vanish. 

We will need to understand the behavior of the restriction map induced by the natural transformation \[i\colon \bZ[\{ e \} ]\rightarrow \bZ[\{ e,H_i\} ]\] of coefficient systems. Topologically this corresponds to the nontrivial map $EG_+\rightarrow E\{e,H_i\}_+\simeq {EF_i}_+$ of pointed $G$-spaces. Using the preferred models $EG=|G^{\bul +1}|$ and $EF_i=|F_i^{\bul +1}|$, we see that this map is induced by the quotient map $G\rightarrow F_i$. It follows that 
\[
i^*\colon H^*(F_i;\bZ)\rightarrow H^*(G;\bZ)
\] is induced by the quotient $G\rightarrow F_i$. 
Of course
\[
i^*\colon H^*(F_i;I(H_i))\rightarrow H^*(G;I(e))=0
\] is the zero map.

\subsection{The family $\sC=\sP$}
We can now assemble the above results to calculate the $E_2$-term from \Cref{prop:klein}. The sum of the counit maps \[\bigoplus_{i=1}^3 \bZ[ \{ e, H_i \} ]\rightarrow \underline{\bZ}\] is evidently surjective and yields the following short exact sequence of coefficient systems:
\begin{equation}\label{eqn:ses-k-theory-ex}
 0\rightarrow \bZ[ \{ e \} ]\oplus \bZ[\{ e \} ]\xrightarrow{j} \bigoplus_{i=1}^3 \bZ[ \{ e, H_i \} ]\rightarrow \underline{\bZ}=\bZ[ \{ e,H_1,H_2,H_3 \} ]\rightarrow 0.
\end{equation}
 Here $j$ is the inclusion of the kernel, which is adjoint to the linear map between trivial $\bZ[G]$-modules given by the following matrix:
\[ j=\left( \begin{array}{cc}
1 & 0 \\
-1 & 1 \\
0 & -1 \end{array}\right).\] 
The short exact sequence in \eqref{eqn:ses-k-theory-ex} induces the following long exact sequence in $\Ext$-groups:
\begin{equation}\label{eqn:ses-k-theory-ex-2}
	\cdots \xleftarrow{\partial} \bigoplus_{i=1}^2 H^*(C_2\times C_2;\bZ[\beta^\pm]) \xleftarrow{j^*} \bigoplus_{i=1}^3 H^*(F_i; R(H_i)[\beta^\pm]) \leftarrow H^*_{C_2\times C_2}(E\sC; \Gpi \KG) \xleftarrow{\partial}\cdots.
\end{equation}

\begin{remark}\label{rem:top-lifting}
We can find $G$-spectra whose integral Bredon homology realizes the short exact sequence of coefficient systems in \eqref{eqn:ses-k-theory-ex}. Moreover, we can also lift the maps of a coefficient systems to maps of $G$-spectra:
\begin{equation}
	\Sigma^\infty_+EG\vee \Sigma^\infty_+ EG \xrightarrow{j} \bigvee_{i=1}^3 \Sigma^\infty_+ E\sC(H_i)\rightarrow \Sigma^\infty_+ E\sC.
\end{equation}
Mapping this sequence into $\KG$ and taking the associated Atiyah-Hirzebruch spectral sequences one can see that the maps in \eqref{eqn:ses-k-theory-ex} are morphisms between the $E_2$-terms of these spectral sequences. 
\end{remark}


\begin{thm}\label{thm:example-e2}
	The $E_2$-term from \Cref{prop:klein} can be explicitly identified as follows:
	\[
		\sideset{}{^*}\lim_{\sOGC^\op} (\Gpi \KG)\cong \bZ[\beta^\pm]\otimes\left(A\oplus B\right)	\]
	where 
	\begin{equation} A= \mathrm{Im}\partial=\bigoplus_{i=1}^2\tilde{H}^{*-1}(C_2\times C_2;\bZ)/\left(\bZ/2\{(y_1^k,0),(y_2^k,y_2^k),(0,y_3^k)\}_{k\geq 1}\right)\label{eq:line-1}
	\end{equation}
	and 
	\begin{equation}
	 B=\ker j^*= \bZ\oplus \left(\bigoplus_{i=1}^3 H^*(F_i; I(H_i))\right)\label{eq:line-2}.
	\end{equation}
\end{thm}
\begin{proof}
Plugging the calculations of the previous sections into the associated long exact sequence from \eqref{eq:les} we obtain an exact sequence
\[ 0\rightarrow A\rightarrow \sideset{}{^*}\lim_{\sOGC^\op} R(-) \rightarrow B\rightarrow 0.\] 
In the zeroth cohomological degree all of the terms are in $B$, so we have to check this sequence splits in positive degrees.
In positive degrees the splitting of the coefficient system $R(-)\cong \bZ\oplus I(-)$ splits this sequence.
\end{proof}

\begin{remark}
We will now perform some simple consistency checks. Note that all of the positive filtration terms in this spectral sequence are 2-torsion in accordance with \Cref{prop:torsion}. 
One can independently verify the correctness of the 0-line by analyzing the representation rings.

Finally we note that if we restrict to any proper subgroup of $C_2\times C_2$, then for some $i$ and all positive $k$, $y_i^k$ is sent to zero as are all the terms divisible by the $z$ classes. The terms $I(H_i)$ restrict to zero on all of the subgroups except $H_i$, in which case the higher cohomology groups map to zero. It follows that all of the positive degree terms restrict to zero on any proper subgroup as expected. 
\end{remark}

\subsection{Analysis of the $\sC$-homotopy limit spectral sequence.}\label{sec:C-homotopy-lim-analysis}
In this section we will complete this calculation and prove:
\begin{thm}\label{thm:ss-conclusion}
	Let $G=C_2\times C_2$. The $\sC$-homotopy limit spectral sequence
	\[ E_2^{s,t}=\sideset{}{^s}\lim_{\sOGC^\op}(\pi_t^{(-)} \KG)\Longrightarrow \pi_{t-s}^G\KG \cong R(G)[\beta^\pm]\]
	collapses at $E_4$ onto the zero line. Moreover the $E_2$-edge homomorphism \[ R(G)\rightarrow \lim_{\sOGC^\op} R(C)\] is injective with cokernel $\bZ/2$. A generator of the cokernel supports a nontrivial $d_3$. 
\end{thm}

We will break up the analysis of this spectral sequence using the splitting in \Cref{thm:example-e2}. To analyze the $A$-summand in \eqref{eq:line-1} we will first determine the behavior of the classical Atiyah-Hirzebruch spectral sequence
\begin{gather}
 H^s(G;\bZ[\beta^\pm])\cong \bZ[y_1,y_2, z]/(2y_1,2y_2,
 2z,z^2-y_1^2y_2-y_1y_2^2)[\beta^{\pm}]\\ \Longrightarrow
 \mathit{KU}^{s-t}(BG)\cong \pi_{t-s}^G F(EG_+;\mathit{KU}).\label{eq:classical-ahss}
\end{gather} 
This spectral sequence arises from a multiplicative filtration on the ring
spectrum $R:=F(EG_+,\mathit{KU})$, which is compatible with the similarly
defined filtration on the free module \[\Sigma^{-1}R\vee \Sigma^{-1}R\simeq
\Sigma^{-1}(R\times R)\simeq \Sigma^{-1}F(EG_+\vee EG_+,\mathit{KU})\simeq
F(\Sigma(EG_+\vee EG_+),\mathit{KU}).\] We can now identify the spectral
sequence converging to $\pi_{t-s}^G F(\Sigma(EG_+\vee EG_+), \mathit{KU})$ as
two shifted copies of the spectral sequence in \eqref{eq:classical-ahss}. As
discussed in \Cref{rem:top-lifting}, the spectral sequence converging to $\pi_{t-s}^G F(\Sigma(EG_+\vee EG_+, \mathit{KU})$ maps to the $\sC$-homotopy limit spectral sequence and by \Cref{thm:example-e2} the image of this morphism is the $A$-summand. Through this comparison we can determine the differentials emanating from the $A$-summand from the differentials in \eqref{eq:classical-ahss}.

\begin{figure}
\centering 
\includegraphics[scale=0.65]{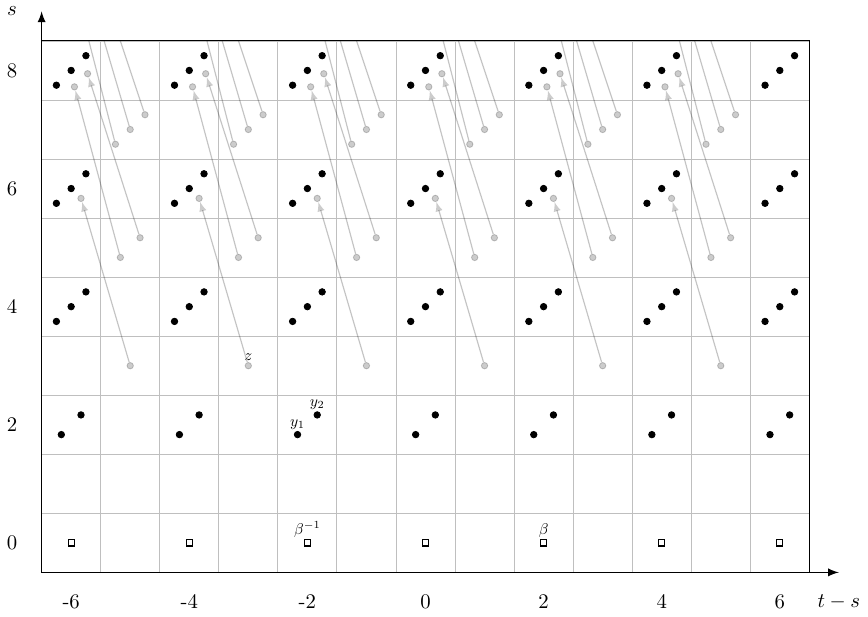}
\caption{The $E_3$-page of the Atiyah-Hirzebruch spectral sequence converging to $\pi_{t-s}^G\KU\cong \mathit{KU}^{s-t}(BG)$, where $G=C_2\times C_2$.}\label{fig:ahss}
\end{figure}
Now by \cite[Prop.~2.4]{Ati61}, the first differential in \eqref{eq:classical-ahss} is a $d_3$ given by the operation $\mathrm{Sq}^3_\bZ$. This operation is defined to be the following 
composition 
\[\mathrm{Sq}^3_\bZ\colon H\bZ \xrightarrow{-\otimes \bZ/2} H\bZ/2 \xrightarrow{\mathrm{Sq}^2} \Sigma^2 H\bZ/2\xrightarrow{\beta_\bZ} \Sigma^3 H\bZ. \] 
Here $\beta_\bZ$ is the boundary map induced by the short exact sequence of coefficients: \[0\rightarrow \bZ\xrightarrow{\cdot 2} \bZ\rightarrow \bZ/2\rightarrow 0. \]

Now the mod-2 reduction of $y_i^k$ is $x_i^{2k}$ and an easy inductive argument shows that, for $k$ odd, $\mathrm{Sq}^2 x_i^{2k}=x_i^{2k+2}$ and, for $k$ even, $\mathrm{Sq}^2 x_i^{2k}=0$. Now by our calculations from \Cref{sec:trivial-family}, $\beta_\bZ$ is zero on $x_i^{2\ell}$, for $\ell$ positive. It follows that $d_3(y_i^k)=\mathrm{Sq}^3_{\bZ}y_i^k=0$. Similarly
\[\mathrm{Sq}_\bZ^3 z=\beta_\bZ \mathrm{Sq}^2 (x_1^2 x_2+x_1 x_2^2)= \beta_{\bZ} (x_1^4 x_2 + x_1 x_2^4)= y_1^2 y_2 +y_1 y_2^2=z^2,\] 
so $d_3(z)=z^2$. Using the Leibniz rule, one generates all other differentials in this spectral sequence. The $E_4$-page is concentrated in even degrees, so the spectral sequence of \Cref{fig:ahss} collapses at this stage.

\begin{proof}[Proof of \Cref{thm:ss-conclusion}]
Since the homotopy groups $\Gpi \KG$ are concentrated in even degrees the first possible differential in the $\sC$-homotopy limit spectral sequence is a $d_3$. We will first calculate this differential on the $A$-summand from \eqref{eq:line-1}:
\begin{equation}\label{eq:boundary} 
\partial\colon \bZ[\beta^\pm]\otimes A=\bigoplus_{i=1}^2\tilde{H}^{*-1}(C_2\times C_2;\bZ[\beta^\pm])\rightarrow H^*_{C_2\times C_2}(E\sC;\Gpi \KG).
\end{equation}
\begin{figure}
\centering 
\includegraphics[scale=0.85]{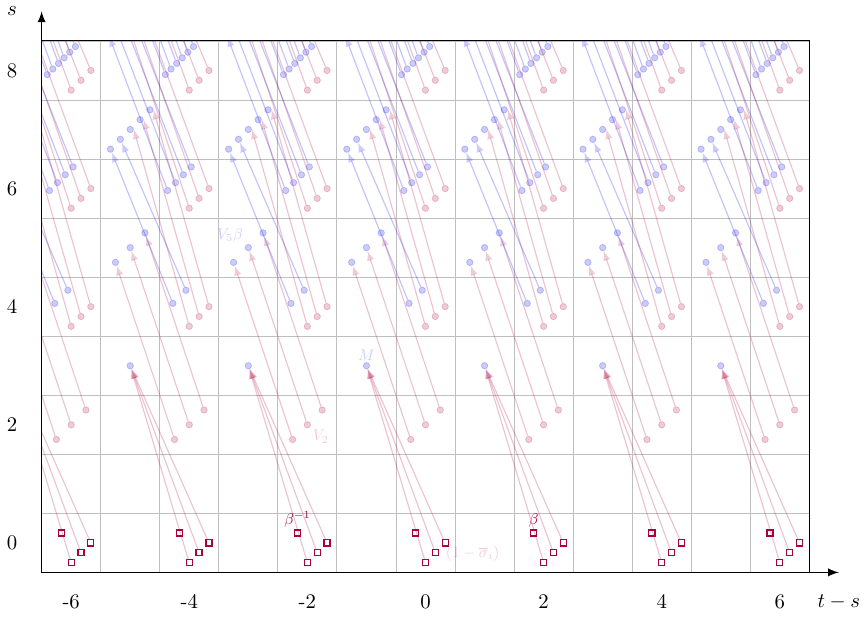}
\caption{The $E_3$-page of the $\sC$-homotopy limit spectral sequence
converging to $\pi_{t-s}^G\KG$, where $G=C_2\times C_2$. The $A$-terms from \eqref{eq:line-1} and the differentials emanating from them are tinted blue. The $B$-terms from \eqref{eq:line-2} and the differentials emanating from them are tinted red.}\label{fig:sc-holim-ss}
\end{figure}

Having determined the behavior of the spectral sequence in
\eqref{eq:classical-ahss} we can now determine the differentials emanating from
the $A$-summand. We see that $d_3(z,0)=(z^2,0)$ and $d_3(0,z)=(0,z^2)$ and that
these generate all $d_3$ differentials emanating from the $A$-term (see
\Cref{fig:sc-holim-ss}). Moreover all of the remaining classes from $A$ are
permanent cycles from $E_4$-onward, since they come from permanent cycles in the
Atiyah-Hirzebruch spectral sequence converging to $\KG^*_G(\Sigma(EG_+\vee~EG_+))$. 

Since the $B$-summand \eqref{eq:line-2} is concentrated in even degrees, any possible $d_3$ emanating from it must land in the $A$-summand. Let us now calculate the differentials coming out of the zero line. An elementary analysis of the restriction maps $R(C_2\times C_2)\rightarrow R(H_i)$ shows that the degree 0 part of the $E_2$-edge homomorphism
\[ R(C_2\times C_2)\rightarrow \lim_{\sOGC^\op} R(-)\cong \bZ\oplus \bigoplus_{i=1}^3 \bZ\{1-\overline{\sigma}_i\}\] sends the unit summand isomorphically to itself, while sending $(1-\sigma_i)$ to $\sum_{j\neq i}(1-\overline{\sigma}_j)$. It follows that the restriction map is injective with cokernel $\bZ/2$ generated by $\Delta$. We can choose $(1-\overline{\sigma}_i)$ as a generator of $\Delta$ for any $i$. Since the spectral sequence converges we know that that all terms in positive filtration must die and that $\Delta$ must support a differential, i.e., $d_i(\Delta)\neq 0$ for some $i\geq 3$.

We will now show that $d_3(\Delta)\neq 0$. Examining the $E_2$-term from \Cref{thm:example-e2} we see that \[d_3(\Delta)\in H^3_{G}(E\sC;\pi_2^{(-)} \KG)\cong \bZ/2\] which is generated by \[M=(y_2,0)\beta\equiv (y_3,y_3)\beta\equiv (0,y_1)\beta.\] Now $M\in A$ is a permanent cycle. Since the positive filtration terms can not survive the spectral sequence, $M$ must be hit by a differential emanating from the zero line. It follows that \[d_3(\Delta)=d_3(1-\overline{\sigma}_i)=M\] for each $i$.

The remaining terms in $E_4^{0,*}$ are the free abelian groups generated by \[1,(1-\overline{\sigma}_1)+(1-\overline{\sigma}_3),(1-\overline{\sigma}_2)+(1-\overline{\sigma}_3), \textrm{ and }2(1-\overline{\sigma}_3).\] These are in the image of the restriction map and hence survive to the $E_\infty$-page.
\begin{figure}
\begin{tabu}{ c   c   }
\hline
 Term & Mod-$(2, \beta-1)$ Poincar\'e Series \\
\hline
 $A$ & $t\left(\frac{2(1+t^3)}{(1-t^2)^2}-\frac{3t^2}{1-t^2}-2\right)$\\
 $B$ & $1+\frac{3}{1-t^2}$\\
 $d_3\Delta$ & $1+t^3$\\
 $d_3(z,0)y_1^*y_2^*$ & $t\left(\frac{t^3(1+t^3)}{(1-t^2)^2}\right)$\\
 $d_3(0,z)y_1^*y_2^*$ & $t\left(\frac{t^3(1+t^3)}{(1-t^2)^2}\right)$\\
 $d_3((1-\overline{\sigma}_i)y_i)y_i^*$ & $\frac{t^2(1+t^3)}{1-t^2}$\\
\hline
\end{tabu}
\caption{Poincar\'e series calculations. The Poincar\'e series of a differential is defined to be the series for the dimension of the image plus the dimension of the vector space mapping injectively to the image.\label{fig:poincare-series}}
\end{figure}

For the remaining terms we examine the Poincar\'e series for $H^*_G(E\sC;R(-))$ (see \Cref{fig:sc-holim-ss,fig:poincare-series}). We can argue inductively on the filtration degree to see that all of the $A$-terms which do not support a differential must be the target of a $d_3$ and that $d_3$ is injective on the positive degree terms in $B$. For example, one can see that the 3-dimensional vector space $V_5 \beta$ in filtration degree 5 must be in the image of a differential. Since the only possible differential out of the zero line is the $d_3$ we just calculated, we see that $V_5\beta$ must be the image of a $d_3$ coming from the 3-dimensional vector space \[V_2=\bF_2\{(1-\overline{\sigma}_i)y_i\}_{1\leq i\leq 3}\] in filtration degree 2. This pattern continues with $d_3$-differentials yielding isomorphisms between the remaining pairs of 3-dimensional vector spaces.

It follows that the $\sC$-homotopy limit spectral sequence collapses at  $E_4$ onto the zero line. 
\end{proof}

\bibliographystyle{alpha}
\bibliography{biblio}

\end{document}